\documentclass[12pt]{amsart}
\usepackage{amsmath}
\usepackage{amsfonts}
\usepackage{amssymb}
\usepackage{amscd}
\usepackage{mathtools}
\usepackage{amsxtra}
\usepackage{amsthm}
\usepackage{graphicx}
\usepackage[abbrev,alphabetic]{amsrefs}
\usepackage[dvipsnames]{xcolor}
\usepackage{soul}
\setstcolor{red}

\usepackage{booktabs}
\usepackage{multirow}
\newtagform{tiny}{\tiny(}{)}
\usepackage{threeparttable}
\usepackage{aliascnt}

\usepackage{mathtools}
\usepackage{hyperref}
\usepackage[margin=1.25in]{geometry}
\usepackage{mathrsfs}

\usepackage{amsthm}
\usepackage{comment}
\usepackage[all,cmtip]{xy}
\usepackage{tikz-cd}
\usetikzlibrary{cd}

\tikzcdset{
  cells={font=\everymath\expandafter{\the\everymath\displaystyle}},
}

\usepackage[all]{xy}

\usepackage{cleveref}

\makeatletter
\def\@tocline#1#2#3#4#5#6#7{\relax
  \ifnum #1>\c@tocdepth 
  \else
    \par \addpenalty\@secpenalty\addvspace{#2}%
    \begingroup \hyphenpenalty\@M
    \@ifempty{#4}{%
      \@tempdima\csname r@tocindent\number#1\endcsname\relax
    }{%
      \@tempdima#4\relax
    }%
    \parindent\z@ \leftskip#3\relax \advance\leftskip\@tempdima\relax
    \rightskip\@pnumwidth plus4em \parfillskip-\@pnumwidth
    #5\leavevmode\hskip-\@tempdima
      \ifcase #1
       \or\or \hskip 1em \or \hskip 2em \else \hskip 3em \fi%
      #6\nobreak\relax
    \hfill\hbox to\@pnumwidth{\@tocpagenum{#7}}\par
    \nobreak
    \endgroup
  \fi}
\makeatother

\newcommand{\rup}[1]{\lceil #1 \rceil}

\renewcommand{\mod}{\ \textrm{mod}\ }

\newcommand{\Z}{\mathbb{Z}}
\newcommand{\Q}{\mathbb{Q}}

\newcommand{\m}{\mathfrak{m}}

\newcommand{\bn}{\boldsymbol{n}}
\newcommand{\bh}{\boldsymbol{h}}
\newcommand{\bolde}{\boldsymbol{e}}

\newcommand{\sQ}[2]{Q_{#1,#2}}

\def\var{\overline}

\DeclareMathOperator{\Hom}{Hom}

\DeclareMathOperator{\Ker}{Ker}

\DeclareMathOperator{\sht}{ht}

\DeclareMathOperator{\fpt}{fpt}

\DeclareMathOperator{\ppt}{ppt}

\renewcommand{\Im}{\mathrm{Im}}

\theoremstyle{plain}
\newtheorem{theorem}{Theorem}[section]

\newtheorem{theoremA}{Theorem}

\crefname{theoremA}{Theorem}{Theorems}
\Crefname{theoremA}{Theorem}{Theorems}

\newaliascnt{lemma}{theorem}
\newtheorem{lemma}[lemma]{Lemma}
\aliascntresetthe{lemma}
\crefname{lemma}{Lemma}{Lemmas}
\Crefname{lemma}{Lemma}{Lemmas}

\newaliascnt{proposition}{theorem}
\newtheorem{proposition}[proposition]{Proposition}
\aliascntresetthe{proposition}
\crefname{proposition}{Proposition}{Propositions}
\Crefname{proposition}{Proposition}{Propositions}

\newaliascnt{prop}{theorem}
\newtheorem{prop}[prop]{Proposition}
\aliascntresetthe{prop}
\crefname{prop}{Proposition}{Propositions}
\Crefname{prop}{Proposition}{Propositions}

\newaliascnt{corollary}{theorem}

\aliascntresetthe{corollary}
\crefname{corollary}{Corollary}{Corollaries}
\Crefname{corollary}{Corollary}{Corollaries}

\newaliascnt{cor}{theorem}

\aliascntresetthe{cor}
\crefname{cor}{Corollary}{Corollaries}
\Crefname{cor}{Corollary}{Corollaries}

\newaliascnt{claim}{theorem}
\newtheorem{claim}[claim]{Claim}
\aliascntresetthe{claim}
\crefname{claim}{Claim}{Claims}
\Crefname{claim}{Claim}{Claims}

 \newtheorem*{claim*}{Claim}

\theoremstyle{definition}
\newaliascnt{definition}{theorem}
\newtheorem{definition}[definition]{Definition}
\aliascntresetthe{definition}
\crefname{definition}{Definition}{Definitions}
\Crefname{definition}{Definition}{Definitions}

\newaliascnt{dfn}{theorem}

\aliascntresetthe{dfn}
\crefname{dfn}{Definition}{Definitions}
\Crefname{dfn}{Definition}{Definitions}

\newaliascnt{notation}{theorem}
\newtheorem{notation}[notation]{Notation}
\aliascntresetthe{notation}
\crefname{notation}{Notation}{Notations}
\Crefname{notation}{Notation}{Notations}

\newaliascnt{example}{theorem}
\newtheorem{example}[example]{Example}
\aliascntresetthe{example}
\crefname{example}{Example}{Examples}
\Crefname{example}{Example}{Examples}

\theoremstyle{remark}
\newaliascnt{remark}{theorem}
\newtheorem{remark}[remark]{Remark}
\aliascntresetthe{remark}
\crefname{remark}{Remark}{Remarks}
\Crefname{remark}{Remark}{Remarks}

\newaliascnt{setting}{theorem}

\aliascntresetthe{setting}
\crefname{setting}{Setting}{Settings}
\Crefname{setting}{Setting}{Settings}

\newtheorem*{ackn}{Acknowledgements}

\newenvironment{claimproof}[0]
  {%
   \paragraph{\it Proof.}%
  }
  {%
    \hfill$\blacksquare$%
  }

\numberwithin{equation}{section}

\crefname{theorem}{Theorem}{Theorems}
\crefname{proposition}{Proposition}{Propositions}
\crefname{lemma}{Lemma}{Lemmas}
\crefname{corollary}{Corollary}{Corollaries}
\crefname{conjecture}{Conjecture}{Conjectures}
\crefname{claim}{Claim}{Claims}
\crefname{notation}{Notation}{Notations}
\crefname{remark}{Remark}{Remarks}
\crefname{example}{Example}{Examples}
\crefname{definition}{Definition}{Definitions}
\crefname{theoremA}{Theorem}{Theorems}

\title{Perfectoid pure thresholds of lifts of rational double points}

\author{Teppei Takamatsu}
\address{Department of Mathematics, Faculty of Science,
Saitama University,
255 Shimo-Okubo, Sakura-ku,
Saitama-shi, Saitama 338-8570,
Japan}
\email{teppeitakamatsu.math@gmail.com}

\author{Shou Yoshikawa}
\address{Institute of Science Tokyo, Tokyo 152-8551, Japan}
\email{yoshikawa.s.9fe9@m.isct.ac.jp}

\begin{document}

\begin{abstract}
We study the perfectoid pure threshold with respect to $p$, an invariant of singularities in mixed characteristic $(0,p)$ arising from perfectoid purity.
In this paper, we compute perfectoid pure thresholds for lifts of rational double points.
We show that the set of such thresholds is contained in $\Q$ and satisfies the ascending chain condition.
In characteristic $2$, all reciprocals of positive integers occur, and $0$ is the unique accumulation point.
\end{abstract}

\maketitle

\section{Introduction}

The theory of singularities in mixed characteristic has recently undergone remarkable development, as demonstrated in works such as \cite{MSTWW}, \cite{BMPSTWW24}, and \cite{HLS}. 
This line of research has led to profound applications in both algebraic geometry and commutative algebra, including the mixed characteristic minimal model program \cite{BMPSTWW23}, \cite{TY23}, as well as Skoda-type theorems \cite{HLS}.

In this paper, we study an invariant of rings in characteristic $(0,p)$ called the \emph{perfectoid pure threshold}. 
The notion of perfectoid purity was introduced in \cite{p-pure} in the study of singularities in mixed characteristic, and the perfectoid pure threshold can be viewed as an analogue of the $F$-pure threshold in positive characteristic, introduced in \cite{TW04}, and of the log canonical threshold in characteristic zero.
Note that closely related mixed characteristic thresholds have also been investigated recently (see \cite{Rod}, \cite{cai}, and \cite{benozzo}).

The second author introduced methods for computing perfectoid pure thresholds with respect to $p$ in \cite{Yoshikawa25} and \cite{Yoshikawa25-cri}. 
Using these methods, the second author proved the following result.

\begin{theorem}[\cite{Yoshikawa25-cri}*{Theorem~C}]\label{intro:rationality}
Let $d$ be a non-negative integer.
Then the following two sets coincide:
\begin{itemize}
\item the set $\mathcal{P}_{d,p}$ of $\ppt(R,p)$ such that $(R,\m)$ is a regular local ring of dimension $d$ with $p \in \m$ and $R/pR$ is $F$-finite, and
\item the set $\mathcal{T}_{d,p}$ of $\fpt(R,f)$ such that $(R,\m)$ is a regular $F$-finite local ring of dimension $d$ of characteristic $p$ and $0 \neq f \in \m$.
\end{itemize}
\end{theorem}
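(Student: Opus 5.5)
The plan is to prove the two inclusions $\mathcal{P}_{d,p}\subseteq\mathcal{T}_{d,p}$ and $\mathcal{T}_{d,p}\subseteq\mathcal{P}_{d,p}$ separately. In both directions the bridge is the computation of $\ppt(R,p)$ in terms of Frobenius-type data on the characteristic-$p$ ring $R/pR$ or on a closely related ring, which we take from the methods of \cite{Yoshikawa25} and \cite{Yoshikawa25-cri}.

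\emph{Reduction to complete rings.} Both invariants are unchanged under $\m$-adic completion. For $\fpt$ this is standard; for $\ppt$ we assume it from \cite{p-pure} and \cite{Yoshikawa25}. So let $(R,\m)$ be a complete regular local ring of dimension $d$ with $p\in\m$ and $F$-finite residue field $k$. By Cohen structure theory there are two cases.

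\textbf{Unramified case,} $p\notin\m^2$. Then $R/pR$ is regular and $R$ is perfectoid pure with respect to $p$. We expect $\ppt(R,p)=1$, which equals $\fpt(k[[x_1,\dots,x_d]],x_1)$ when $d\ge 1$.

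\textbf{Ramified case,} $p\in\m^2$. Then $R\cong W(k)[[x_1,\dots,x_d]]/(p-f)$ for some $f\in(p,x_1,\dots,x_d)^2$. The key input should be the criterion from \cite{Yoshikawa25-cri}, which expresses $\ppt(R,p)$ through the behaviour of $\overline{f}\in k[[x_1,\dots,x_d]]$ under the quasi-$F$-splitting / Frobenius-lift type operators. I would try to prove directly that
\[
\ppt\bigl(W(k)[[x_1,\dots,x_d]]/(p-f),\,p\bigr)=\fpt\bigl(k[[x_1,\dots,x_d]],\,\overline{f}\bigr).
\]
Here $\overline{f}$ is the image of $f$ modulo $p$. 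One may first change coordinates, absorbing $p$-multiples of $f$, so that $\overline{f}\neq 0$, or at least so that it lies in $\m$ of the $d$-dimensional power series ring.

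\emph{Inclusion $\mathcal{P}\subseteq\mathcal{T}$.} This follows from the displayed equality together with the unramified case.

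\emph{Inclusion $\mathcal{T}\subseteq\mathcal{P}$.} Take a complete regular $F$-finite local ring of characteristic $p$, which we may assume is $k[[x_1,\dots,x_d]]$, and a nonzero $g\in\m$. Lift $g$ to $\tilde g\in W(k)[[x]]$ and set $R=W(k)[[x]]/(p-\tilde g)$. This is regular of dimension $d$. If $g\in\m^2$, the displayed equality applies. If $g\notin\m^2$, then $\fpt(g)=1$, and this value is realized by the unramified case. There is one subtlety. When $\overline{f}$ has $p$-power-like behaviour, the choice of lift matters: a naive lift might make $\ppt$ differ from $\fpt$. In that case the freedom in choosing $\tilde g$, such as the Teichmüller-type lift of the coefficients, should be used to match the value.

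\emph{Main obstacle.} The hard step is the displayed equality. The $F$-pure threshold is computed from the $p^e$-th Frobenius via $\nu_g(p^e)$. The perfectoid pure threshold involves $p$-th roots of $p=f$ in the absolute integral closure, and these interact with the non-additivity of Witt vector arithmetic. I would first try to show $\ppt\ge\fpt$ using a Frobenius lift $\phi$ on $W(k)[[x]]$ and a comparison of $\delta$-ring splittings. I would then show $\ppt\le\fpt$ by reducing mod $p$ and using that $p^{t}$ maps to $\overline{f}^{t}$ in the perfectoidization. If only one inequality holds for a given lift, the fallback is to show that every value arising on either side arises on the other for some lift. Equality of the sets, rather than of the invariants ring by ring, is all the theorem requires.
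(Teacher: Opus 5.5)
\textbf{There is a genuine gap.} The paper does not prove this statement; it quotes it from \cite{Yoshikawa25-cri}*{Theorem~C}, so your proposal has to be judged on its own.

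Your reductions are fine up to minor points: the Cohen structure theorem, the unramified case giving $1=\fpt(x_1)$, and the choice of lift for $\mathcal{T}_{d,p}\subseteq\mathcal{P}_{d,p}$. For an $F$-finite but possibly imperfect residue field, $W(k)$ must be replaced by a Cohen ring. The claim $\ppt(R,p)=1$ when $R/pR$ is $F$-pure needs a reference, such as \cite{p-pure}*{Theorem~6.6}.

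The problem is that everything is funnelled into the identity $\ppt(A/(p-f),p)=\fpt(\var f)$, and that identity \emph{is} the theorem. You prove neither inequality.
\begin{itemize}
\item ``Comparison of $\delta$-ring splittings'' is not an argument.
\item ``Reduce mod $p$ and use $p^t\mapsto \var f^{\,t}$'' fails as stated. In $R/pR=k[[x]]/(\var f)$ the ambient ring $k[[x]]$, which is what $\fpt(\var f)$ is about, has been lost. You would have to pass to the tilt, identify it with the perfection of $k[[x]]$ with $p^\flat$ corresponding to $\var f$ up to a unit, and show that this particular perfectoid cover computes $\ppt$. None of this is addressed.
\item The fallback does not rescue you. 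For $\mathcal{P}_{d,p}\subseteq\mathcal{T}_{d,p}$ the ring $R$ is given, so you cannot choose a convenient lift, and you have no other candidate pair whose $\fpt$ equals $\ppt(R,p)$.
\end{itemize}

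\textbf{The missing idea.} Your instinct to use a Frobenius lift is right; what you lack is the observation that, in this paper's Fedder-type language, the lift drops out.
\begin{itemize}
\item Put $F:=p-f$ with $f\in(x_1,\dots,x_d)$. Then $\Delta_1(F)\equiv -1 \pmod{(p,x_1,\dots,x_d)}$, so $\Delta_1(F)$ is a unit.
\item In general $\Delta_1(F^{p-1})\equiv -\var F^{\,p(p-2)}\Delta_1(F)\pmod p$. Hence the elements $f_h$ of \cref{non-ideal-ver} built from $F$ are unit multiples of $\var f^{\,N_h}$, with $N_h=((p-2)p^h+1)/(p-1)$.
\item Units affect neither the conditions $f_h\notin J_r^{[p^h]}$ nor the colon ideals $J_{r+1}$. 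So every condition fixing the multi-height becomes an inequality $\nu_{\var f}(p^e)\ge N$, where $\nu_{\var f}(p^e)=\max\{s\mid \var f^{\,s}\notin\m^{[p^e]}\}=\lceil \fpt(\var f)\,p^e\rceil-1$.
\item A short computation then shows that the recursion for $(h_r)$ reproduces the expansion in \cref{mult-ht-to-ppt}. That is, $\ppt(R,p)=\fpt(\var f)$, independently of the lift, so your worry about lift-dependence disappears.
\end{itemize}
As a sanity check against Table~\ref{table:ppts}: the regular lift $G=1$ of $E_8^0$ in characteristic $3$ has $\ppt=2/3=\fpt(z^2+x^3+y^5)$.

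This argument only covers the range $\fpt(\var f)>(p-2)/(p-1)$, which is exactly when $R$ is quasi-$F$-split. For $p\ge 3$ and $\fpt(\var f)\le (p-2)/(p-1)$, no multi-height exists. There one needs the general splitting-order-sequence criterion of \cite{Yoshikawa25-cri} (compare \cref{sos-and-multi-height}), and nothing in your proposal handles that range.
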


\noindent
Since the set $\mathcal{T}_{d,p}$ is known to satisfy several discreteness properties—namely rationality, the ascending chain condition, and restrictions on accumulation points \cite{BMS}, \cite{Sato21}, \cite{Sato23}—the same properties also hold for $\mathcal{P}_{d,p}$.

In this paper, we compute perfectoid pure thresholds  with respect to $p$ for not necessarily regular lifts of rational double points (RDPs) and prove the following result, which generalizes such discreteness phenomena.

\begin{theoremA}[\Cref{ACC}]\label{intro:ACC}
Let $p$ be a prime number and let $k$ be an algebraically closed field of characteristic $p$.
Consider the set
\[
\Sigma := \{ \ppt(R,p) \mid \text{$R$ is a $W(k)$-lift of an RDP of characteristic $p$ over $k$} \}.
\]
Then $\Sigma$ is contained in $\Q$ and satisfies the ascending chain condition.
Furthermore, if $p=2$, the set $\Sigma$ contains all numbers of the form $1/m$ with $m \in \Z_{\ge 1}$.
Moreover, every accumulation point of $\Sigma$ is equal to $0$.
\end{theoremA}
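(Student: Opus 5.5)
The plan is to reduce everything to an explicit computation: determine $\ppt(R,p)$ for every $W(k)$-lift $R$ of an RDP, organized by the ADE classification together with Artin's list of characteristic-$p$ forms, and then read off rationality, ACC and the accumulation points from the resulting list.

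\textbf{Step 1: normal form of lifts.} An RDP over $k$ is $k[[x,y,z]]/(\bar f)$, with $\bar f$ one of Artin's normal forms; this includes the non-taut families $D_n^r$ and $E_n^r$ in characteristics $2,3,5$. A $W(k)$-lift is then $R=W(k)[[x,y,z]]/(f)$ with $f\equiv \bar f \bmod p$, so $f=\bar f + p\,g$ for some $g\in W(k)[[x,y,z]]$. Write $S=W(k)[[x,y,z]]$, a regular local ring with $p\in\m_S$. I would use inversion of adjunction type statements, in the spirit of the computation methods of \cite{Yoshikawa25} and \cite{Yoshikawa25-cri}, to express $\ppt(R,p)$ through invariants of the pair $(S, f)$ and of $p$. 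The guiding model is the $F$-pure threshold of the hypersurface $\bar f + p\,\bar g$ read in characteristic $p$, which connects to \Cref{intro:rationality}.

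\textbf{Step 2: case analysis.} If $p\ge 7$, or more generally if $\bar f$ is $F$-pure and $R$ is perfectoid pure with $p$ in a suitable sense, I expect the threshold to be $1$ or determined uniformly. The interesting cases are the non-$F$-pure RDPs in small characteristic, where the value depends on the lift, i.e.\ on $g$ modulo suitable ideals. Concretely, for $p=2$ and type $D_n^0$ or $A_n$-like forms, take $f=z^2+x^2y+xy^m+p\,g$. I would show that $\ppt(R,p)$ equals $1/m'$, where $m'$ is determined by the order of $g$ along a certain valuation; a quasi-$F$-split-height style computation should work, analogous to the Fedder-type criteria in \cite{Yoshikawa25-cri}. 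Choosing $g$ appropriately, for instance $g=y^{m-1}$ type terms, realizes every $1/m$, which gives the $p=2$ statement.

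\textbf{Step 3: structure of $\Sigma$.} With the explicit list in hand, $\Sigma\subset\Q$ is immediate. The list should consist of finitely many families, each of the form $\{a/(bm+c)\}$ or a finite set, with values decreasing in the discrete parameter. Such a set satisfies ACC, and its only possible accumulation point is $\lim a/(bm+c)=0$.

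\textbf{Main obstacle.} The hardest part is the exact computation in Step 2 for the non-taut families in $p=2,3,5$. There the lift parameter $g$ interacts with the Frobenius splitting data, and both an upper bound and a matching lower bound are needed. For the lower bound I would try to construct explicit perfectoid-pure splittings using the criterion from \cite{Yoshikawa25}, checking non-vanishing of a specific coefficient of $(f^{p-1}\Delta(f)\dots)$. For the upper bound I would exhibit a non-splitting via a monomial valuation. Reducing an arbitrary $g$ to finitely many normal-form cases, by coordinate changes over $W(k)$, is also delicate and must be done carefully.
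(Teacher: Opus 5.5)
\textbf{There is a genuine gap.} Your Step~3 assumes that $\Sigma$ can be written as an explicit list made of finitely many monotone one-parameter families $a/(bm+c)$. Your Step~2 gives no way to produce such a list, and one is not available. For $p=2$, the non-taut types $D^r_{2n}$ and $D^r_{2n+1}$ with $1\le r\le n-1$ form infinitely many two-parameter families, each with infinitely many lifts $f=\bar f+2G$. The values are irregular: the natural lifts of $D^r_{34}$ already give $9/256$, $5/128$, $41/1024$, $3/64$, $27/512$. The paper itself says the type-$D$ values are not determined. So ACC and the accumulation-point claim cannot be read off a list; you need a structural finiteness statement. The missing ingredients are the following.
\begin{enumerate}
\item The quasi-$F$-split \emph{multi-height} $(h_0,h_1,\dots)$, together with $\ppt(R,p)=\sum_m c_m/p^m$, where $c_m=p-1$ exactly when $m=h_0+\dots+h_r$ (Theorem~\ref{mult-ht-to-ppt}). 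The single height that your ``quasi-$F$-split-height style computation'' would give only fixes $h_0$; for $p=2$ that merely locates $\ppt\in[2^{-h_0},2^{1-h_0}]$.
\item A Fedder-type criterion computing each $h_r$ (Theorem~\ref{multi-fedder}).
\item For the $D^r$ types, the result that $h_i\le e_i$, where $(e_i)$ is driven by an eventually periodic sequence $(\alpha_i)$ depending only on $n-r$. Moreover, $h_j<e_j$ forces $h_i=1$ for all $i>j$ (Theorems~\ref{no-xz} and~\ref{xz}).
\end{enumerate}
Item (3) gives rationality and shows that, for fixed $n-r$, only finitely many values occur. ACC and ``only accumulation point $0$'' then follow from a dichotomy: either the multi-height is $(h_0,1,1,\dots)$, giving $2^{1-h_0}$, or $h_0=e_0\in\{\lceil\log_2(n-r)\rceil+1,\lceil\log_2(n-r+1)\rceil\}$. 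In the second case $n-r$ is bounded along any increasing sequence and along any sequence tending to a nonzero limit. Note also that the paper does not normalize $G$ by coordinate changes. It handles arbitrary $G$ by parity and monomial arguments, and the decisive split is whether the $xz$-coefficient of $G$ is odd.

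\textbf{Your mechanism for $1/m\in\Sigma$ is false.} Take $f=z^2+x^2y+xy^n+2G$, of type $D^0_{2n}$. Modulo $2$, both $f$ and $\Delta_1(f)\equiv\Delta_1(f-2G)+G^2$ have only even $z$-degrees. Hence the witness for $h_0$ can be taken in $(z)$, and Proposition~\ref{stable-1} gives $h_i=1$ for all $i\ge1$. So $\ppt=1/2^{h_0-1}$: only reciprocals of powers of $2$ occur, never $1/3$. Lifts of $A_n$ are $F$-pure and give $1$. The paper instead realizes $1/m$ with $f=z^2+x^2y+xy^{2m}+xy^mz+2xz$, of type $D^m_{4m}$ with odd $xz$-coefficient. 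Its multi-height is the full sequence $(e_0,e_1,\dots)$, and $\ppt=1/(n-r)=1/m$ by Theorem~\ref{D2nr-xz}. Finally, your appeal to Theorem~\ref{intro:rationality} through ``the $F$-pure threshold of $\bar f+p\bar g$'' does not apply. That theorem concerns regular $R$, whereas lifts of RDPs are singular hypersurfaces, and $p\bar g=0$ in characteristic $p$ anyway.
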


\noindent
Furthermore, we give \Cref{table:ppts}, which lists all perfectoid pure thresholds for lifts of RDPs other than those of type $D$.

To prove Theorem~\ref{intro:ACC}, we introduce the notion of \emph{multi-height} together with Fedder-type methods for computing it (see Section~\ref{section:multi-height}).
This notion is useful for computing $\ppt(-,p)$ for lifts of quasi-$F$-split complete intersection local rings.

\begin{ackn}
The first author was supported by JSPS KAKENHI Grant Number JP25K17228.
The second author was supported by JSPS KAKENHI Grant Number JP24K16889.
Some of the computations in this paper were carried out using Macaulay2, and the authors are grateful to its developers.
\end{ackn}

\section{Multi-height and perfectoid pure threshold of \texorpdfstring{$p$}{p}}
\label{section:multi-height}

\subsection{Definition of multi-height and Fedder-type criterion}

In this subsection, we introduce the \emph{multi-height}, an invariant for computing $\mathrm{ppt}(-,p)$ for quasi-$F$-split $\mathbf{Z}_{(p)}$-algebras. 
It should be viewed as a refined version of the mixed-characteristic quasi-$F$-split height (\cite{Yoshikawa25}*{Definition 4.4}). Moreover, as an extension of \cite{kty}*{Theorem 4.11} and \cite[Theorem~4.13]{Yoshikawa25}, we prove a criterion \Cref{multi-fedder} (a Fedder-type criterion) for computing the multi-height.

\begin{definition}
Let $R$ be a $\Z_{(p)}$-algebra.
For $r \in \Z_{\geq 0}$ and $\boldsymbol{n}:=(n_0,n_1,\ldots,n_r) \in \Z_{\geq 1}^{r+1}$, we define the $\var{R}$-module $\sQ{R}{\bn}$ along with homomorphisms
\[
\Psi_{R,\bn}\colon \sQ{R}{(n_0,\ldots,n_{r-1})} \to \sQ{R}{\bn},\quad V^{n_0-1} \colon F^{n_0}_*\sQ{R}{(n_1,\ldots,n_r)} \to \sQ{R}{\bn}
\]
Here, when $r=0$, we regard the domain of $\Psi_{R,\bn}$ as $\overline{R}$.
We define them inductively as follows:
\begin{itemize}
    \item For $r=0$, we set $\sQ{R}{(n_0)} := Q_{R,n_0}$ and $\Psi_{R,(n_0)} := \Phi_{R,n_0}$.
    These are defined in \cite{Yoshikawa25}*{Definition~4.1}.
    \item For $r=1$, we define $\sQ{R}{(n_0,n_1)}$, $\Psi_{R,(n_0,n_1)}$, and $V^{n_0-1}$ via the following pushout diagram in the category of $\var{R}$-modules:
\begin{equation}\label{eq:push1}
    \begin{tikzcd}[column sep=1.5cm]
    F^{n_0}_*\var{R} \arrow[r,"F^{n_0}_*\Psi_{R,(n_1)}"] \arrow[d,"V^{n_0-1}"] & F^{n_0}_*\sQ{R}{(n_1)} \arrow[d,"V^{n_0-1}"] \\
    \sQ{R}{(n_0)} \arrow[r,"\Psi_{R,(n_0,n_1)}"] & \sQ{R}{(n_0,n_1)}, \arrow[ul, phantom, "\ulcorner", very near start]
\end{tikzcd}
\end{equation}
    \item For $r \geq 2$, we define $\sQ{R}{\bn}$, $\Psi_{R,\bn}$, and $V^{n_0-1}$ via the following pushout diagram in the category of $\var{R}$-modules:
\begin{equation}\label{eq:push2}
\begin{tikzcd}[column sep=2.5cm]
    F^{n_0}_*\sQ{R}{(n_1,\ldots,n_{r-1})} \arrow[r,"F^{n_0}_*\Psi_{R, (n_1,\ldots,n_r)}"] \arrow[d,"V^{n_0-1}"] & F^{n_0}_*\sQ{R}{(n_1,\ldots,n_r)} \arrow[d,"V^{n_0-1}"] \\
    \sQ{R}{(n_0,\ldots,n_{r-1})} \arrow[r,"\Psi_{R,\bn}"] & \sQ{R}{\bn}. \arrow[ul, phantom, "\ulcorner", very near start]
\end{tikzcd}
\end{equation}
\end{itemize}

Furthermore, we define the map $\Phi_{R,\bn} \colon \var{R} \to \sQ{R}{\bn}$ as the composition
\[
\Phi_{R,\bn} \colon \var{R} \xrightarrow{\Psi_{R,(n_0)}} \sQ{R}{(n_0)} \xrightarrow{\Psi_{R,(n_0,n_1)}} \sQ{R}{(n_0,n_1)} \xrightarrow{\Psi_{R,(n_0,n_1,n_2)}} \cdots \xrightarrow{\Psi_{R,\bn}} \sQ{R}{\bn}.
\]
\end{definition}

\begin{definition}
Let $R$ be a Noetherian $\Z_{(p)}$-algebra such that $\var{R}$ is $F$-finite.
For $\bh := (h_0,h_1,\ldots) \in \prod \Z_{\geq 1}$, we say that $R$ has the \emph{quasi-$F$-split multi-height $\bh$} (or simply, multi-height $\bh$) if for every $r \geq 0$, 
\[
h_r = \inf \{h \geq 1 \mid \Phi_{R,(h_0,\ldots,h_{r-1},h)}\ \text{splits} \}.
\]
\end{definition}

\begin{remark}\label{remark:ineq-h}
If $R$ is quasi-$F$-split with $\sht(R)=h$, then we have $h_0=h$ and $h_i \le h$ for all $i \ge 1$.
Indeed, the equality $h_0=h$ follows directly from the definition.
Since the morphism $\Psi_{R, (h)} \colon \var{R} \to \sQ{R}{(h)}$ splits, the map 
$\Psi_{R,(h_{i-1},h)} \colon \sQ{R}{(h_{i-1})} \to 
\sQ{R}{(h_{i-1},h)}$ also splits
by the pushout diagram \eqref{eq:push1}.
Using \eqref{eq:push2} and repeating the above argument, we see that the homomorphism
\[
\Psi_{R,(h_0,h_1,\ldots,h_{i-1},h)} \colon \sQ{R}{(h_0,h_1,\ldots,h_{i-1})} \to \sQ{R}{(h_0,h_1,\ldots,h_{i-1},h)}
\]
splits.
Therefore, we obtain $h_i \le h$, as desired.

In particular, $R$ admits a multi-height if and only if $R$ is quasi-$F$-split, and in that case the multi-height is clearly unique.
\end{remark}

\begin{notation}\label{notation:compare}
Let $(R,\m)$ be a Noetherian local $\Z_{(p)}$-algebra such that $R$ is $p$-torsion free and $p$-adically complete, $p \in \m$, and $\var{R} := R/pR$ is $F$-finite.
We set $d := \dim \var{R}$.
\end{notation}

\begin{theorem}\label{mult-ht-to-ppt}
We use \cref{notation:compare}, and assume that $R$ is a quasi-$F$-split complete intersection local ring.
If $R$ has multi-height $\bh = (h_0,h_1,\ldots)$, then
\[
\ppt(R,p) = \sum_{m \geq 1} \frac{c_m}{p^m},
\]
where
\[
c_m :=
\begin{cases}
    p - 1 & \text{if } m = \sum_{i=0}^r h_i \text{ for some } r \geq 0, \\
    p - 2 & \text{otherwise}.
\end{cases}
\]
\end{theorem}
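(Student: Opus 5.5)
We reduce to the Fedder-type description of $\ppt(R,p)$ from \cite{Yoshikawa25} and \cite{Yoshikawa25-cri}, and then identify the $p$-adic digits of the threshold digit by digit with the splitting behaviour encoded in the multi-height.

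\textbf{Step 1: reduce to a regular ambient ring.} Since $R$ is a complete intersection, $p$-adically complete and $p$-torsion free, we write $R = A/(f_1,\ldots,f_c)$ with $A$ a complete unramified regular local ring and $f_1,\ldots,f_c$ a regular sequence. Set $f := f_1\cdots f_c$. The criterion of \cite{Yoshikawa25-cri} then expresses the inequality $\ppt(R,p) \ge t$, for $t = \sum_{m\ge1} c'_m/p^m$ a truncated $p$-adic expansion, as the non-vanishing modulo $\m^{[p^{n}]}$ of an explicit element. This element is built from $f^{p-1}$, the operator $\Delta_1$ (the Witt-vector correction $\Delta_1(a) = (a^p - F(a))/p$), and powers of $p$ whose exponents are dictated by the digits $c'_m$. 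Reading off, each digit $c'_m = p-1$ requires the corresponding Frobenius stage to be "split", while a digit $p-2$ costs one factor of $p$ and permits one use of $\Delta_1$.

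\textbf{Step 2: compute the multi-height in the same terms.} We prove, as an extension of \cite{kty}*{Theorem 4.11} and \cite{Yoshikawa25}*{Theorem 4.13}, a Fedder-type criterion for $\Phi_{R,\bn}$ to split. This is the criterion the paper announces as \Cref{multi-fedder}. Inductively along the pushouts \eqref{eq:push1} and \eqref{eq:push2}, $\Phi_{R,(n_0,\ldots,n_r)}$ splits if and only if a sequence of elements $\theta_0,\theta_1,\ldots$ exists with the following property. Each $\theta_{i+1}$ is obtained from $\theta_i$ by applying $n_i - 1$ steps of the operator $u\mapsto F^{-1}(\Delta_1(f^{p-1})\,u)$ and then one step of $u \mapsto F^{-1}(f^{p-1}u)$, and the final element lies outside $\m^{[p]}$. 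The minimality of $h_r$ then means the following: after the first $h_0+\cdots+h_{r-1}$ Frobenius levels have been consumed, exactly $h_r$ further levels are needed before a non-vanishing "split" step occurs.

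\textbf{Step 3: match the two descriptions.} Comparing Steps 1 and 2, the digits of the maximal admissible expansion equal $p-1$ precisely at the levels $m = h_0+\cdots+h_r$, where the split step is taken, and equal $p-2$ at all other levels, where a $\Delta_1$ step is taken. Since the multi-height exists by \Cref{remark:ineq-h}, the partial sums give a lower bound for $\ppt(R,p)$. For the upper bound, suppose some digit could be raised, or some split level moved earlier. The resulting criterion element would then yield a splitting of some $\Phi_{R,(h_0,\ldots,h_{r-1},h)}$ with $h<h_r$, contradicting the definition of the multi-height. Passing to the limit over truncations, which is justified because $\ppt$ is a supremum over such $t$, gives the formula $\ppt(R,p) = \sum_{m\ge1} c_m/p^m$.

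\textbf{Main obstacle.} The hardest step is Step 2, specifically showing that the iterated pushouts $\sQ{R}{\bn}$ have the stated Fedder-type description. This requires controlling $\Hom_{\var{R}}(\sQ{R}{\bn},\var{R})$ through the pushout squares and identifying it with explicit Witt-vector data on $A$. I would first treat $r=1$ directly, using the known description of $\Hom(Q_{R,n},\var R)$ from \cite{kty}, and then induct on $r$, using that $\Hom$ turns each pushout into a pullback.
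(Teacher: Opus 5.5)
Your Steps~1 and~3 are where the theorem actually lives, and neither is carried out. The statement is exactly a bridge between perfectoid purity of $(R,p^t)$ and the splittings of the maps $\Phi_{R,\bn}$ built from the pushouts \eqref{eq:push1} and \eqref{eq:push2}. You replace that bridge by an unstated ``criterion of \cite{Yoshikawa25-cri}'', described only heuristically, and then assert that the digits match. Two points in particular are unjustified.

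\emph{Lower bound.} To get $\ppt(R,p)\ge\sum_m c_m/p^m$ you must show that the digit $p-2$ is attained at every non-split level, i.e.\ that no digit of the threshold falls below $p-2$. This is where quasi-$F$-splitting of $R$ has to enter, and your outline never uses it.

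\emph{Upper bound.} You claim that raising one digit, or moving a split level earlier, produces a splitting of $\Phi_{R,(h_0,\ldots,h_{r-1},h)}$ with $h<h_r$. That needs an exact dictionary between your criterion element and the ideals $I_{(n_0,\ldots,n_r)}$, and the dictionary cannot be read off level by level. By \Cref{remark:using-element}, a block of length $n$ contributes $u\circ\theta^{n-1}(F^n_*(a f^{p-1}))$. So $f^{p-1}$ enters at the same Frobenius level as the first $\Delta_1(f^{p-1})$, and the last level carries no factor at all. Your Step~2 paraphrase instead has every level carrying exactly one factor.

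Moreover, the Fedder-type $\ppt$ machinery of \cite{Yoshikawa25-cri} that you cite (the splitting-order sequence) is formulated for a single element $f$ with $p,f$ regular. For a hypersurface, the matching you want is essentially \Cref{sos-and-multi-height}, which would still have to be combined with a result expressing $\ppt$ through $\boldsymbol{s}(f)$. For $c\ge 2$, nothing in your outline justifies replacing $R=A/(f_1,\ldots,f_c)$ by $f=f_1\cdots f_c$ on the $\ppt$ side. A smaller point: $R$ is only $p$-adically complete. Presenting it over a complete unramified regular $A$ with a Frobenius lift therefore requires passing to the $\m$-adic completion and checking that neither $\ppt(R,p)$ nor $\bh$ changes.

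The paper does not go through any Fedder-type comparison here. It deduces the formula directly from \cite{Yoshikawa25}*{Theorem~5.9}, which computes $\ppt(R,p)$ of a quasi-$F$-split complete intersection in terms of successive splittings of this kind. The modules $\sQ{R}{\bn}$ and maps $\Phi_{R,\bn}$ are set up so that the multi-height packages exactly that data. In particular, \Cref{multi-fedder} plays no role in this proof; it is the tool used afterwards to compute $\bh$ in examples. So the step you single out as the main obstacle is not the one carrying the theorem, while the step you treat as a black-box citation is.

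To repair the argument, you have two options:
\begin{enumerate}
\item invoke \cite{Yoshikawa25}*{Theorem~5.9} and verify that its splitting conditions coincide with the definition of $\bh$; or
\item in the hypersurface case only, state precisely a $\ppt$ formula in terms of the splitting-order sequence and combine it with \Cref{sos-and-multi-height}.
\end{enumerate}
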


\begin{proof}
It follows from \cite{Yoshikawa25}*{Theorem~5.9}.
\end{proof}

\begin{notation}\label{notation:Fedder}
Let $(A,\m)$ be a regular local ring, and let $\phi \colon A \to A$ be a finite ring homomorphism that lifts the Frobenius morphism.
We assume that $p \in \m \setminus \m^2$, and set $\var{A} := A/pA$, so that $\var{A}$ is regular.
Fix a generator $u \in \Hom_{\var{A}}(F_*\var{A}, \var{A})$.
For an ideal $J$ of $A$ containing $p$,  we set
\[
J^{[p^e]}:=(x^{p^e},p \mid x \in J)
\]
for every $e \geq 0$.
We note that $J^{[p^0]}=J$ and $\phi^e(J)A=J^{[p^e]}$ for every $e \geq 0$.

Let $a_1, \ldots, a_c$ be a regular sequence in $A$, and set $I := (a_1, \ldots, a_c)\var{A}$ and $f := a_1 \cdots a_c$.
Assume that $R := A/(a_1, \ldots, a_c)$ is $p$-torsion free, and set $\var{R} := R/pR$.

Define a map $\theta \colon F_*\var{A} \to \var{A}$ by
\[
\theta(F_*a) := u(F_*(\Delta_1(f^{p-1})a)),
\]
where we put
\[
\Delta_1 \colon A \rightarrow A; \quad a \mapsto \frac{a^p- \phi (a)}{p}.
\]

By \cite{Yoshikawa25}*{Theorem~3.7}, we have an isomorphism
\begin{equation}\label{eq:isom}
\Hom_{\var{A}}(\sQ{A}{(n)}, \var{A}) \simeq \Hom_{\var{A}}(F^n_*\var{A}, \var{A}) \oplus \cdots \oplus \Hom_{\var{A}}(F_*\var{A}, \var{A}) 
\overset{(\star_1)}{\simeq} F^n_*\var{A} \oplus \cdots \oplus F_*\var{A},
\end{equation}
where $(\star_1)$ is given by the natural identification
\[
F^e_*\var{A} \to \Hom_{\var{A}}(F^e_*\var{A}, \var{A});\quad F^e_*a \mapsto (F^e_*b \mapsto u^e(F^e_*(ab)))
\]
for each $n \geq e \geq 1$.

For $g_1, \ldots, g_n \in \var{A}$, we denote by $\varphi_{(g_1, \ldots, g_n)}$ the homomorphism corresponding to the tuple $(F^n_*g_1, \ldots, F_*g_n)$ under the above identification.

Furthermore, for $n_0, \ldots, n_r \in \Z_{\geq 1}$, we define an ideal $I_{(n_0, \ldots, n_r)} \subseteq \var{A}$ inductively as follows:

First, define $I_1 := f^{p-1}\var{A} + I^{[p]}$, and for $n \geq 2$, define
\[
I_n := \theta(F_*I_{n-1}) + I_1.
\]

Next, assume that $I_{(n_1, \ldots, n_r)}$ has been defined for some $n_1, \ldots, n_r \in \Z_{\geq 1}$.
Then, define
\begin{equation}
\label{eqn:defn1}
I_{(1, n_1, \ldots, n_r)} := u(F_*I_{(n_1, \ldots, n_r)}) f^{p-1} + I^{[p]},
\end{equation}
and for $n \geq 2$, define
\begin{equation}
\label{eqn:defn12}
I_{(n, n_1, \ldots, n_r)} := \theta(F_*I_{(n-1, n_1, \ldots, n_r)}) + I_1.
\end{equation}
\end{notation}

\begin{theorem}\label{multi-fedder}
We use \cref{notation:Fedder}.
Then for $n_0,\ldots,n_r \in \Z_{\geq 1}$, we have
\[
\Im\left(\Hom_{\var{R}}(\sQ{R}{(n_0,\ldots,n_r)}, \var{R}) \xrightarrow{\mathrm{ev}} \var{R}\right) = u(F_*I_{(n_0,\ldots,n_r)})\var{R}.
\]
In particular,  the multi-height of $R$ is $(h_0, h_1, \ldots)$ if and only if  for every $r \in \Z_{\geq 0}$ we have
\[
h_r = \inf \left\{ h \mid I_{(h_0, \ldots, h_{r-1}, h)} \nsubseteq \m^{[p]}\var{A} \right\}.
\]
\end{theorem}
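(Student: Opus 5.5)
We argue by induction on $r$, computing at each stage the evaluation image of $\Hom_{\var{R}}(\sQ{R}{\bn}, \var{R})$ from the pushout description of $\sQ{R}{\bn}$. The base case $r=0$ is the Fedder-type criterion for the quasi-$F$-split height in \cite{Yoshikawa25}*{Theorem~4.13} (extending \cite{kty}*{Theorem~4.11}). That result gives
\[
\Im(\Hom_{\var{R}}(\sQ{R}{(n)},\var{R})\xrightarrow{\mathrm{ev}}\var{R}) = u(F_*I_n)\var{R}
\]
with $I_1=f^{p-1}\var{A}+I^{[p]}$ and $I_n=\theta(F_*I_{n-1})+I_1$. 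We will use it in a strengthened form that records more than the evaluation image. Namely, we keep track of the whole $\Hom$-module through the identification \eqref{eq:isom}: a homomorphism $\varphi_{(g_1,\ldots,g_n)}$ on $\sQ{A}{(n)}$ descends to $\sQ{R}{(n)}$ exactly when the $g_i$ satisfy compatibility conditions with $I$. These are the conditions $g_n\in I^{[p]}:I$ and the successive $\theta$-type conditions. After the Fedder-type lifting $\Hom_{\var R}(F^e_*\var R,\var R)\cong F^e_*((I^{[p^e]}:I)/I^{[p^e]})$, its value on $1$ is $u(F_*(\text{something in } I_n))$.

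For the inductive step, take $\bn=(n_0,\ldots,n_r)$. A homomorphism $\psi\colon\sQ{R}{\bn}\to\var{R}$ is, by the universal property of \eqref{eq:push2}, the same as a pair
\[
\alpha\colon\sQ{R}{(n_0,\ldots,n_{r-1})}\to\var R,\qquad \beta\colon F^{n_0}_*\sQ{R}{(n_1,\ldots,n_r)}\to\var R
\]
that agree on $F^{n_0}_*\sQ{R}{(n_1,\ldots,n_{r-1})}$. The plan is to peel off $n_0$ one step at a time, and this is why the ideals are defined with the recursion in the first index. We first treat $n_0=1$ using \eqref{eq:push1}/\eqref{eq:push2} with $V^0$. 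In this case $\sQ{R}{(1)}=F_*\var R$, and $\beta$ is determined by a homomorphism $\gamma$ on $\sQ{R}{(n_1,\ldots,n_r)}$ composed with a map $F_*\var R\to\var R$. Using the $n_0=1$ Fedder description, the latter corresponds to $F_*(f^{p-1}a)$ modulo $I^{[p]}$. So the image is
\[
u\bigl(F_*(f^{p-1}\cdot \Im(\mathrm{ev}_\gamma))\bigr)+u(F_*I^{[p]}),
\]
and by induction $\Im(\mathrm{ev}_\gamma)=u(F_*I_{(n_1,\ldots,n_r)})$. This matches \eqref{eqn:defn1}. For $n_0\ge 2$ we use the factorization of $V^{n_0-1}$ through $V^{n_0-2}$ and $Q_{R,n_0}$ as an extension of $Q_{R,n_0-1}$ by $F^{n_0}_*$. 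The extra Witt-vector step contributes exactly the twist by $\Delta_1(f^{p-1})$, that is, the map $\theta$, as in the height computation. The summand $I_1$ accounts for homomorphisms supported on the first Frobenius level. This yields \eqref{eqn:defn12}.

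The main obstacle is this last step. We must check that the gluing condition ``$\alpha$ and $\beta$ agree on the overlap'' imposes no further constraint beyond what the recursion records; that is, every element of $u(F_*I_{\bn})$ is actually realized by a compatible pair. We first try to reduce, via the explicit Witt-vector description of $Q_{A,n}$ and the lift $\phi$, to the computation already done in \cite{Yoshikawa25}*{Theorem~4.13}, applied to $A$ with $\phi$. That computation identifies the twisting term $\Delta_1(f^{p-1})$, and since the pushout is functorial in this data, the same term should reappear.

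Finally, for the ``in particular'' statement: $\Phi_{R,\bn}$ splits if and only if some $\psi$ has $\psi(\Phi_{R,\bn}(1))$ a unit, i.e.\ the evaluation image equals $\var R$. Since $R$ is local, this holds if and only if $u(F_*I_{\bn})\not\subseteq\m\var A$, which is equivalent to $I_{\bn}\nsubseteq\m^{[p]}\var A$ because $u$ generates the $\Hom$ group. Substituting this into the definition of multi-height gives the stated criterion.
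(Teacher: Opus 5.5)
The parts of your proposal that work are these:
\begin{itemize}
\item The base case.
\item The case $n_0=1$. Here $V^0$ is an isomorphism, so $\sQ{R}{(1,\bn')}\cong F_*\sQ{R}{\bn'}$, and only the evaluation image for $\bn'$ enters.
\item The deduction of the ``in particular'' statement.
\end{itemize}

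The gap is the case $n_0\ge 2$, which is the real content of the inductive step. There, both inclusions are only asserted (``the extra Witt-vector step contributes exactly the twist by $\Delta_1(f^{p-1})$'', ``the same term should reappear''). The realization direction, which you yourself flag as the main obstacle, gets no argument at all.

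Moreover, your set-up cannot close the induction as stated. Suppose you glue over $F^{n_0}_*\sQ{R}{(n_1,\ldots,n_{r-1})}$ as in \eqref{eq:push2}, or peel off $n_0$ one Witt-vector step at a time. In either case the compatibility condition is an equality of homomorphisms on a non-cyclic module, such as $F^{n_0}_*\sQ{R}{(n_1,\ldots,n_{r-1})}$ or $F_*\sQ{R}{(n_0-1)}$. Such a condition is not determined by evaluation images, and evaluation images are all your induction hypothesis records. You note that a ``strengthened form'' is needed, but you never formulate one.

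The missing idea is to compose the pushout squares. Set $n=n_0$ and $\bn'=(n_1,\ldots,n_r)$. Then $\sQ{R}{\bn}$ is the pushout of $F^{n}_*\Phi_{R,\bn'}\colon F^n_*\var{R}\to F^n_*\sQ{R}{\bn'}$ along $V^{n-1}\colon F^n_*\var{R}\to\sQ{R}{(n)}$, and the overlap is now just $F^n_*\var{R}$.

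\begin{itemize}
\item By Gorenstein duality, write $\psi_2=u_R^n\circ F^n_*\psi_2'$. Then $\psi_2\circ F^n_*\Phi_{R,\bn'}$ equals $u_R^n(F^n_*(x\cdot -))$ with $x=\psi_2'(\Phi_{R,\bn'}(1))$. So only the evaluation image for $\bn'$ matters, and the unstrengthened induction hypothesis suffices.
\item Lift $\psi_1$ to $\varphi_{(g_1,\ldots,g_n)}$. The gluing condition becomes the single congruence $g_1\equiv f^{p^n-1}\widetilde{x} \bmod I^{[p^n]}$ on the top coordinate.
\item Feed this into the explicit descent description of \cite{Yoshikawa25}*{Lemma~C.3}, where $g_1\equiv f^{p^n-p}h_1$. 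The height-case condition $h_1\in I_1$ is replaced by $h_1\in I_{(1,\bn')}$.
\item The $\theta$-recursion from the proof of \cite{Yoshikawa25}*{Theorem~4.13} then gives $\psi_1(1)=u(F_*g_n)\in u(F_*I_{\bn})\var{R}$.
\end{itemize}

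The reverse inclusion needs two further ingredients, neither of which appears in your proposal:
\begin{itemize}
\item \cite{Yoshikawa25}*{Claim~C.4} builds $(g_1,\ldots,g_n)$ with $g_n\equiv g \bmod I_{n-1}$ and a prescribed $h_1$. This is then glued with a $\psi_2'$ supplied by induction.
\item Maps factoring through $\sQ{R}{(n)}\to\sQ{R}{(n-1)}$ kill the image of $V^{n-1}$. Hence they glue with $\psi_2=0$ and realize the leftover term $u(F_*I_{n-1})$.
\end{itemize}
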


\begin{proof}
We prove the assertion by induction on $r$.

For $r=0$, the claim follows from the proof of \cite{Yoshikawa25}*{Theorem~4.13}.

Set $\bn := (n_0,\ldots,n_r)$, $\bn' := (n_1,\ldots,n_r)$, and $n := n_0$.
By definition, we obtain the following pushout diagram:
\[
\begin{tikzcd}[column sep=1.5cm]
    F^{n}_*\var{R} \arrow[r,"F^{n}_*\Phi_{R,\bn'}"] \arrow[d,"V^{n-1}"] & F^{n}_*\sQ{R}{\bn'} \arrow[d,"V^{n-1}"] \\
    \sQ{R}{(n)} \arrow[r] & \sQ{R}{\bn}. \arrow[ul, phantom, "\ulcorner", very near start]
\end{tikzcd}
\]
Take an $\var{R}$-module homomorphism $\psi \colon \sQ{R}{\bn} \to \var{R}$.
Then $\psi$ corresponds to a pair of homomorphisms $\psi_1 \colon \sQ{R}{n} \to \var{R}$ and $\psi_2 \colon F^n_*\sQ{R}{\bn'} \to \var{R}$ satisfying
\[
\psi_1 \circ V^{n-1} = \psi_2 \circ F^n_*\Phi_{R,\bn'}.
\]
Note that
\[
\mathrm{ev}(\psi) = \psi(1) = \psi_1(1).
\]

Since $\var{R}$ is Gorenstein, we have
\[
\Hom_{\var{R}}(F^n_*\sQ{R}{\bn'},\var{R}) \simeq F^n_*\Hom_{\var{R}}(\sQ{R}{\bn'},\var{R}).
\]
Thus, if we take the element $\psi_2' \colon \sQ{R}{\bn'} \to \var{R}$ corresponding to $\psi_2$, then we have $\psi_2 = u^n_R \circ F^n_* \psi_2'$, where $u_R \in \Hom_{\var{R}}(F_* \var{R}, \var{R})$ is a generator.
Hence,
\begin{equation}\label{eq:hom}
\psi_1 \circ V^{n-1} = u^n_R \circ F^n_* \psi_2' \circ F^n_*\Phi_{R,\bn'}.
\end{equation}

We set
\[
x := \psi_2' \circ \Phi_{R,\bn'}(1) \in \Im\left( \Hom_{\var{R}}(\sQ{R}{\bn'}, \var{R}) \xrightarrow{\mathrm{ev}} \var{R} \right) \overset{(\star_1)}{=} u(F_*I_{\bn'}) \overline{R},
\]
where $(\star_1)$ follows from the induction hypothesis.
Thus, $\psi_2' \circ \Phi_{R,\bn'} = \cdot x$.
If we take a lift $\widetilde{x} \in u(F_*I_{\bn'})$ of $x$, then the right-hand side of \eqref{eq:hom} is the homomorphism induced by
\[
u^n(F^n_*(h \cdot -)) \in \Hom_{\var{A}}(F^n_*\var{A}, \var{A})
\]
for $h=f^{p^n-1}\widetilde{x} \in f^{p^{n}-1} u(F_*I_{\bn'})$.

On the other hand, $\psi_1$ lifts to the homomorphism 
\[
\varphi := \varphi_{(g_1,\ldots,g_n)} \colon \sQ{A}{n} \to \var{A}
\]
for some $g_1,\ldots,g_n \in \var{A}$ as in \eqref{eq:isom}, the left-hand side of \eqref{eq:hom} is induced by $u^n(F^n_*(g_1 \cdot -))$.
Hence, we obtain $g_1 \equiv h \mod I^{[p^n]}$, and in particular,
\[
g_1 \in u(F_*I_{\bn'}) f^{p^n - 1} + I^{[p^n]}.
\]

Take $h_1, \ldots, h_n \in \var{A}$ as in \cite{Yoshikawa25}*{Lemma~C.3}.
Then $g_1 \equiv f^{p^n - p} h_1 \mod I^{[p^n]}$, and in particular $h_1 \in I_{(1,\bn')}$.

By the proof of \cite{Yoshikawa25}*{Theorem~4.13}, we have
\[
\mathrm{ev}(\psi) = \psi_1(1) = (\var{A} \to \var{R})(u(F_*g_n)) \in u(F_*I_{\bn})\var{R}.
\]
Therefore,
\[
\Im\left( \Hom_{\var{R}}(\sQ{R}{\bn}, \var{R}) \xrightarrow{\mathrm{ev}} \var{R} \right) \subseteq u(F_*I_{\bn})\var{R}.
\]

We now prove the reverse inclusion.
Take $g \in I_{\bn}$.
By \cite{Yoshikawa25}*{Claim~C.4}, there exist $g_1,\ldots,g_n \in \var{A}$ such that $\varphi := \varphi_{(g_1,\ldots,g_n)}$ induces a homomorphism $\psi_1 \colon \sQ{R}{n} \to \var{R}$ with $g_n \equiv g \mod I_{n-1}$, and $h_1 \in u(F_*I_{\bn'})f^{p-1}$, where $h_1,\ldots,h_n \in \var{A}$ are as in \cite{Yoshikawa25}*{Lemma~C.3}.

In particular, there exists $x \in I_{\bn'}$ such that $h_1 = u(F_*x) f^{p-1}$.
By the induction hypothesis, there exists $\psi_2' \colon \sQ{R}{\bn'} \to \var{R}$ such that $x = \psi_2' \circ \Phi_{R, \bn'}(1)$.
Set $\psi_2 := u^n_R \circ F^n_* \psi_2'$, and then
\[
\psi_1 \circ V^{n-1} = \psi_2 \circ F^n_* \Phi_{R,\bn'}.
\]

Thus, $\psi_1$ and $\psi_2$ glue to give a homomorphism $\psi \colon \sQ{R}{\bn} \to \var{R}$ satisfying $\psi(1) = \psi_1(1)$, and hence
\[
u(F_*g_n) = \psi_1(1) = \mathrm{ev}(\psi) \in \Im\left( \Hom_{\var{R}}(\sQ{R}{\bn}, \var{R}) \xrightarrow{\mathrm{ev}} \var{R} \right).
\]

Since $g_n \equiv g \mod I_{n-1}$, it remains to show that
\[
u(F_* I_{n-1}) \subseteq \Im\left( \Hom_{\var{R}}(\sQ{R}{\bn}, \var{R}) \xrightarrow{\mathrm{ev}} \var{R} \right).
\]

Take $a \in u(F_* I_{n-1})$, and choose a homomorphism $\varphi' \colon \sQ{R}{(n-1)} \to \var{R}$ such that $\varphi'(1) = a$.
Define
\[
\varphi := (\sQ{R}{(n)} \to \sQ{R}{(n-1)} \xrightarrow{\varphi'} \var{R}),
\]
so that $\varphi \circ V^{n-1} = 0$.
Thus, $\varphi$ induces a homomorphism $\psi \colon\sQ{R}{\bn} \to \var{R}$, and hence
\[
a = \psi(1) \in \Im\left( \Hom_{\var{R}}(\sQ{R}{\bn}, \var{R}) \xrightarrow{\mathrm{ev}} \var{R} \right),
\]
as desired.
\end{proof}

Here we record the relationship between the multi-height defined here and the splitting-order sequence introduced in \cite{Yoshikawa25-cri}.

\begin{proposition}\label{sos-and-multi-height}
Let $(A,\m)$ be a regular local ring with a finite Frobenius lift $\phi$ and $p \in \m$.
Fix the $\delta$-ring structure on $A$ induced by $\phi$.
Let $f \in A$ be such that $p,f$ is a regular sequence.
Let $\boldsymbol{s}(f)=(s_i)_{i \geq 0}$ be the splitting-order sequence of $f$ defined in \cite{Yoshikawa25-cri}*{Definition~3.7}.
Assume that $A/(f)$ has the quasi-$F$-split multi-height $\bh=(h_i)_{i \geq 0}$.
Then, for $i \geq 1$, we have
\[
s_i=
\begin{cases}
    0 & \text{if $i=\sum_{j=0}^r h_j$ for some $r \geq 0$}, \\
    1 & \text{otherwise}.
\end{cases}
\]
\end{proposition}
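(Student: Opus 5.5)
We prove the formula for $s_i$ by comparing two descriptions of the ideals that govern splitting. On the multi-height side we use \Cref{multi-fedder} with $c=1$ and $a_1=f$, so $I=f\var{A}$ and $I^{[p]}=f^p\var{A}$. On the splitting-order side we use the Fedder-type description of $\boldsymbol{s}(f)$ in \cite{Yoshikawa25-cri}. The splitting-order sequence is defined there greedily: at stage $i$ one keeps track of the sequence built so far and records whether a certain ideal, obtained by one more application of $\theta$ (or $u$ together with multiplication by $f^{p-1}$), escapes $\m^{[p]}$. So the plan is to show that the two recursions are the same, and that "$s_i=0$" corresponds exactly to hitting a partial sum $h_0+\cdots+h_r$.

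\textbf{Step 1.} We unwind the definition of $\boldsymbol{s}(f)$ in \cite{Yoshikawa25-cri}*{Definition~3.7} together with the criterion proved there. This gives an ideal $J_i\subseteq\var{A}$ for each $i$, built recursively. Roughly, $J_{i}=\theta(F_*J_{i-1})+I_1$ when $s_{i-1}$ signals continuation, and $J_{i}=u(F_*J_{i-1})f^{p-1}+I^{[p]}$ (a restart) when a splitting has occurred. The value of $s_i$ is determined by whether $J_i\not\subseteq\m^{[p]}$.

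\textbf{Step 2.} Let $N_r=\sum_{j\le r}h_j$. We prove by induction on $r$ that for $N_{r-1}<i\le N_r$ the ideal $J_i$ coincides with $I_{(h_0,\ldots,h_{r-1},\,i-N_{r-1})}$, where the indices are read in the order dictated by the recursions \eqref{eqn:defn1} and \eqref{eqn:defn12}. This is where care is needed. The recursion in \Cref{notation:Fedder} adds new entries at the front, so we must check that the nesting order matches the time-order used in \cite{Yoshikawa25-cri}. If it does not match directly, we first prove a symmetric reformulation of $I_{\bn}$ by induction, using that $\theta$ and $u$ are $p^{-1}$-linear.

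\textbf{Step 3.} By \Cref{multi-fedder}, $I_{(h_0,\ldots,h_{r-1},h)}\not\subseteq\m^{[p]}$ holds first at $h=h_r$. We then also need monotonicity: once the ideal escapes $\m^{[p]}$, it continues to do so for larger $h$. This follows from $I_1\subseteq I_n$ together with the splitting-transfer argument of \Cref{remark:ineq-h}. Feeding this into Step 2 gives $s_i=0$ exactly when $i=N_r$ for some $r$, and $s_i=1$ otherwise.

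The main obstacle is Step 2, namely matching the indexing conventions between the two recursions and pinning down the precise meaning of $s_i\in\{0,1\}$ in \cite{Yoshikawa25-cri}. As a consistency check we will use \Cref{mult-ht-to-ppt}: comparing it with the expression of $\ppt$ in terms of $\boldsymbol{s}(f)$ in \cite{Yoshikawa25-cri} forces $c_i=p-1-s_i$. If that formula is available, it gives an alternative shortcut proof of the whole statement, provided both expansions are shown to be the canonical (non-terminating) ones so that the digits are uniquely determined.
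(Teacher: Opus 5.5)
Your plan is essentially the paper's proof. There, each height $h$ is encoded as the block $\bolde_h=(1,\ldots,1,0)$ ($h-1$ ones followed by a zero), and the ideal $I(\bolde_{n_0},\ldots,\bolde_{n_r})$ of \cite{Yoshikawa25-cri}*{Notation~5.1} is observed to coincide with $I_{(n_0,\ldots,n_r)}$; combining \Cref{multi-fedder} with \cite{Yoshikawa25-cri}*{Theorem~5.2} then finishes, which is exactly your Step~2 identification followed by Step~3. One caution: the forward recursion $J_i=\theta(F_*J_{i-1})+I_1$ sketched in Step~1 is not the right picture, since the two recursions share the same nesting with the last block innermost. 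Applying $\theta$ on the outside raises the first entry, so iterating it would give the reversed tuple $I_{(i-N_{r-1},h_{r-1},\ldots,h_0)}$; the identification you state in Step~2 is the correct one and needs no symmetric reformulation.
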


\begin{proof}
For each $r \geq 1$, set
\[
\bolde_r:=\underbrace{(1,\ldots,1}_{\text{$(r-1)$ times}},0).
\]
Then the ideal $I(\bolde_{h_1},\ldots,\bolde_{h_r},\bolde_h)$ defined in \cite{Yoshikawa25-cri}*{Notation~5.1}
coincides with the ideal $I_{(h_1,\ldots,h_r,h)}$ defined in \cref{notation:Fedder} for a regular element $f$.
Therefore, by \cref{multi-fedder} and \cite{Yoshikawa25-cri}*{Theorem~5.2}, we obtain the desired assertion.
\end{proof}

\subsection{Variants of Fedder-type criterion}

In this subsection, we present several variants of the Fedder-type criterion that are practically convenient for computing the multi-height. As the first variant, we show that, in \Cref{multi-fedder}, the ideal $I^{[p]}$  in (\ref{eqn:defn1}) and the ideal $I_1$  in (\ref{eqn:defn12}) are essentially irrelevant.

\begin{proposition}\label{non-ideal-ver}
We use the notation from \cref{notation:Fedder}.
Set $f_1 := f^{p-1}$ and define inductively
\[
f_i := f^{p-1}\Delta_1(f^{p-1})^{1 + p + \cdots + p^{i-2}}
\quad \text{for } i \ge 2.
\]
Then for any integers $n_1,\ldots,n_r \ge 1$, we have
\[
I_{(n_1,\ldots,n_r)}
= u^{n_1+\cdots+n_r-1}
  \Bigl(
    F^{n_1+\cdots+n_r-1}_*
    (f_{n_1}^{p^{n_2+\cdots+n_r}}
     \cdots
     f_{n_{r-1}}^{p^{n_r}}
     f_{n_r}\var{A})
  \Bigr)
  + I_{(n_1,\ldots,n_{r-1},n_r-1)},
\]
where we set $I_0 := I^{[p]}$ and define inductively
\[
I_{(n_1,\ldots,n_{r-1},0)} := I_{(n_1,\ldots,n_{r-1}-1)}.
\]
In particular, if $(h_0,h_1,\ldots)$ is the multi-height of $R$, then
\begin{equation}
\label{eqn:hrnon-ideal}
h_r = \inf\Bigl\{
  h \Bigm|
  f_{h_0}^{p^{h_1+\cdots+h_{r-1}+h}}
  \cdots f_{h_{r-1}}^{p^h}f_h
  \notin \m^{[p^{h_0+\cdots+h_{r-1}+h}]}
\Bigr\}.
\end{equation}

Furthermore, we set a sequence $\{J_r\}$ of ideals of $A$ inductively as follows:
\begin{itemize}
    \item $J_0=\m$.
    \item $J_{r+1}:=(J_r^{[p^{h_r}]}\colon f_{h_r})$.
\end{itemize}
Then we have
\[
h_r=\inf\{h \mid f_h \notin J_{r}^{[p^h]}\}.
\]
\end{proposition}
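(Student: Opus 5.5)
The plan is to prove the displayed formula for $I_{(n_1,\ldots,n_r)}$ by induction. The outer induction is on $r$, and for fixed $(n_1,\ldots,n_{r-1})$ an inner induction runs on $n_r$. Then I would read off the two descriptions of $h_r$ from \cref{multi-fedder}.

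\textbf{Basic tools.} I will use three facts about $u$ and $\theta$:
\begin{itemize}
\item $u$ is $p^{-1}$-linear, that is, $u(F_*(b^pa))=b\,u(F_*a)$.
\item Consequently $\theta\bigl(F_*u^{e}(F^{e}_*(ga))\bigr)=u^{e+1}\bigl(F^{e+1}_*(\Delta_1(f^{p-1})^{p^{e}}ga)\bigr)$.
\item $u^e(F^e_*(g\var{A}))\subseteq J$ if and only if $g\in J^{[p^e]}$. This is Fedder duality for the regular ring $\var{A}$, using that $u^e$ generates $\Hom(F^e_*\var{A},\var{A})$.
\end{itemize}
With these, $\theta\bigl(F_*u^{n-2}(F^{n-2}_*(f_{n-1}\var{A}))\bigr)=u^{n-1}\bigl(F^{n-1}_*(f_n\var{A})\bigr)$, because $f_{n-1}\Delta_1(f^{p-1})^{p^{n-2}}=f_n$ by the definition of the $f_i$. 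The same computation applied to $u\bigl(F_*I_{(n_2,\ldots)}\bigr)f^{p-1}$ gives, via $p^{-1}$-linearity,
\[
u\bigl(F_*(u^{N}(F^{N}_*g))f^{p-1}\bigr)=u^{N+1}\bigl(F^{N+1}_*(g\,f_{1}^{p^{N}})\bigr).
\]
This produces the factor $f_{n_1}^{p^{n_2+\cdots+n_r}}$ in the base case $n_1=1$. Iterating $\theta$ then upgrades $f_1$ to $f_{n_1}$, with the exponents $p^{n_2+\cdots+n_r}$ carried along.

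\textbf{Induction steps.} For $n_1\ge2$ I would substitute the induction hypothesis into \eqref{eqn:defn12}:
\[
I_{(n_1,\ldots)}=\theta(F_*I_{(n_1-1,\ldots)})+I_1 .
\]
The principal piece becomes exactly the new generator. The remaining piece $\theta(F_*I_{(n_1-2,\ldots)})+I_1$ is, by induction, contained in $I_{(n_1-1,\ldots)}$. For the same reason the ideals increase in the last index, so the sum collapses to the stated form. For $n_1=1$ I would use \eqref{eqn:defn1} in the same way, together with the convention $I_{(\ldots,0)}=I_{(\ldots-1)}$ and $I_0=I^{[p]}$.

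\textbf{Expected main obstacle.} The hardest step is showing that the ``junk'' terms $\theta(F_*I^{[p]})$ and $u(F_*I^{[p]})f^{p-1}$ land back inside the lower ideal, which ultimately means inside $I_1=f^{p-1}\var{A}+I^{[p]}$. For the second term this is immediate from $u(F_*I^{[p]})\subseteq I$ and $f^{p-1}I\subseteq I^{[p]}$; the latter is Fedder's observation that $f^{p-1}\in(I^{[p]}:I)$. For the first term, I would first compute $\Delta_1(f^{p-1})$ modulo $p$. Writing $\phi(f)=f^p-p\delta(f)$ and expanding $\phi(f^{p-1})$ modulo $p^2$ gives
\[
\Delta_1(f^{p-1})\equiv (p-1)f^{p(p-2)}\delta(f)\pmod p .
\]
Combining this with $p^{-1}$-linearity should give
\[
\theta(F_*I^{[p]})\subseteq f^{p-2}\,I\cdot u(F_*\var{A}).
\]
I then need $f^{p-2}I\subseteq f^{p-1}\var{A}+I^{[p]}$, or at least that the resulting image lies in $I_1$. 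For $c=1$ this is clear, since then $I=(f)$. For general $c$ I expect to need a finer analysis, either expanding $\delta(a_1\cdots a_c)$ via the product rule for $\delta$, or checking the inclusion for monomials $a^{\alpha}$ times generators $a_j^p$ in the Koszul-regular situation. This is the step I would try to settle first.

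\textbf{Consequences for $h_r$.} By \cref{multi-fedder}, $h_r$ is the least $h$ with $I_{(h_0,\ldots,h_{r-1},h)}\nsubseteq\m^{[p]}\var{A}$. By minimality of $h_r$ (and of the earlier $h_i$), the lower ideal $I_{(h_0,\ldots,h-1)}$ lies in $\m^{[p]}$ for all $h\le h_r$. So only the principal term matters. Fedder duality with $e=h_0+\cdots+h_{r-1}+h-1$ turns the condition $u^e(F^e_*(g\var{A}))\nsubseteq\m^{[p]}$ into $g\notin\m^{[p^{e+1}]}$, which is \eqref{eqn:hrnon-ideal}.

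For the colon description I would argue by induction on $r$ using the equivalence
\[
g\,f_{h_r}^{p^{m}}\in J_r^{[p^{h_r+m}]}\iff g\in (J_r^{[p^{h_r}]}:f_{h_r})^{[p^{m}]}=J_{r+1}^{[p^m]}.
\]
The right-hand equality is flatness of Frobenius on the regular ring $\var{A}$, that is, the fact that colon commutes with Frobenius powers. Peeling off the factors $f_{h_0}^{p^{\cdots}},\ldots,f_{h_{r-1}}^{p^h}$ one at a time then turns \eqref{eqn:hrnon-ideal} into $f_h\notin J_r^{[p^h]}$.
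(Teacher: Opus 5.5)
\paragraph{The gap.} The missing piece is the inclusion $\theta(F_*I^{[p]})\subseteq I_1$. You correctly identify it as the crux, but you leave it unproved, and the bound you actually derive is too weak to give it. The inclusion is needed already for $I_2=u(F_*(f_2\var{A}))+I_1$ when $r=1$. In general it is what makes \eqref{eqn:defn12} remain valid when the last index is $0$. That validity is exactly what identifies the residual term $\theta(F_*I_{(n_1-1,n_2,\ldots,n_r-1)})+I_1$ with $I_{(n_1,\ldots,n_r-1)}$ in the step $n_1\ge2$.

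From $\Delta_1(f^{p-1})\equiv -f^{p(p-2)}\Delta_1(f)$ you correctly get $\theta(F_*(a_i^pb))=-f^{p-2}a_i\,u(F_*(\Delta_1(f)b))$. If you then discard the factor $u(F_*(\Delta_1(f)b))$, you are left with $f^{p-2}I$, and $f^{p-2}I\not\subseteq I_1$ once $c\ge2$. For $p=c=2$ this would require $a_1\in(a_1a_2,a_1^2,a_2^2)$, which is false for a regular sequence in a regular local ring. So as written the proof is incomplete at its main step.

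\paragraph{How to close it.} Carry out the first option you mention: use the product rule $\Delta_1(ab)\equiv a^p\Delta_1(b)+b^p\Delta_1(a)\pmod p$ to track how $\Delta_1(f)$ interacts with $a_i$. Put $g:=f/a_i$. Then $\Delta_1(f)\equiv a_i^p\Delta_1(g)+g^p\Delta_1(a_i)$. Using $f^{p-2}a_i^2=a_i^pg^{p-2}$ and $f^{p-2}a_ig=f^{p-1}$, this gives
\[
\theta(F_*(a_i^pb))=-a_i^pg^{p-2}\,u(F_*(\Delta_1(g)b))-f^{p-1}\,u(F_*(\Delta_1(a_i)b))\in I^{[p]}+f^{p-1}\var{A}=I_1 .
\]
The paper does the same computation before applying $u$. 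It uses $f^{p-1}a_i=a_i^pg^{p-1}$ and $\Delta_1(a_i^p)\equiv0$ to get
\[
\Delta_1(f^{p-1})a_i^p\equiv\Delta_1(f^{p-1}a_i)-f^{p^2-p}\Delta_1(a_i)\equiv a_i^{p^2}\Delta_1(g^{p-1})-f^{p^2-p}\Delta_1(a_i)\pmod p .
\]
This lies in $I_1^{[p]}$, hence $\theta(F_*I^{[p]})\subseteq u(F_*I_1^{[p]})\subseteq I_1$.

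With this inclusion in hand, the rest of your plan agrees with the paper's proof:
\begin{itemize}
\item induction on $r$ and then on the first index;
\item the other inclusion $u(F_*I^{[p]})f^{p-1}\subseteq I^{[p]}$;
\item reading off $h_r$ from \cref{multi-fedder} via Fedder duality;
\item the colon description via flatness of Frobenius.
\end{itemize}
One small slip: $f^{p-1}$ multiplies outside $u$ in \eqref{eqn:defn1}, so your displayed identity should produce $f_1^{p^{N+1}}$, not $f_1^{p^{N}}$.
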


\begin{proof}
Note that the equation (\ref{eqn:defn1}) holds even if $n_1, \ldots, n_{r-1} \geq 1$ and $n_r =0$. 
Indeed, if $n_i \geq 2$ for some $r-1 \geq i \geq 1$, then this is clear.
When $n_1=\cdots=n_{r-1}=1$, we have
\[
I_{(1,1,\ldots,1,0)} = I_{0} = I^{[p]} =  u(F_* I^{[p]}) f^{p-1} + I^{[p]}
\]
since $If^{p-1} \subseteq I^{[p]}$.
Moreover, the equation (\ref{eqn:defn12}) holds even if $n \geq 2$, $n_1 , \ldots, n_{r-1} \geq 1$ and $n_r =0$.
When $n_i \geq 2$ for some $r-1 \geq i \geq 1$, this is clear.
When $n_1=\cdots=n_{r-1}=1$, we have
\[
I_{(2,1,1,\ldots,1,0)}=I_1=\theta(F_*I^{[p]})+I_1.
\]
Indeed, since
\[
\Delta_1(f^{p-1})a_i^p \equiv \Delta_1(f^{p-1}a_i)-f^{p^2-p}\Delta_1(a_i) \equiv a_i^{p^2}\Delta_1((f/a_i)^{p-1})-f^{p^2-p}\Delta_1(f) \in I_1^{[p]} \pmod{p},
\]
it follows that
\[
\theta(F_*I^{[p]})=u(F_*(\Delta_1(f^{p-1})I^{[p]})) \subseteq I_1.
\]
We first note that $I^{[p]} \subseteq I_{(l_1,\ldots,l_m)}$ for all integers $l_1,\ldots,l_m \ge 0$.
We proceed by induction on $r$.

For $r = 1$, we have
\[
I_1 = f_1 \var{A} + I_0,
\]
and hence inductively,
\[
I_n = \theta(F_* I_{n-1}) + I_1
     = u^{n-1}(F^{n-1}_* (f_n\var{A})) + I_{n-1}
\]
for $n \geq 2$.

Assume now $r \ge 2$.
Then
\[
\begin{aligned}
&I_{(1,n_2,\ldots,n_r)}
= f^{p-1}u(F_* I_{(n_2,\ldots,n_r)}) + I^{[p]} \\
&= f^{p-1}u^{n_2+\cdots+n_r}
  \Bigl(
    F^{n_2+\cdots+n_r}_*
    \bigl(
      f_{n_2}^{p^{n_3+\cdots+n_r}}
      \cdots
      f_{n_{r-1}}^{p^{n_r}}
      f_{n_r}\var{A}
    \bigr)
  \Bigr)  + f^{p-1}u(F_* I_{(n_2,\ldots,n_{r-1},n_r-1)}) + I^{[p]} \\
&= u^{n_2+\cdots+n_r}
  \Bigl(
    F^{n_2+\cdots+n_r}_*
    \bigl(
      f_1^{p^{n_2+\cdots+n_r}}
      f_{n_2}^{p^{n_3+\cdots+n_r}}
      \cdots
      f_{n_{r-1}}^{p^{n_r}}
      f_{n_r}\var{A}
    \bigr)
  \Bigr)
  + I_{(1,n_2,\ldots,n_{r-1},n_r-1)}.
\end{aligned}
\]

Furthermore, inductively,
\[
\begin{aligned}
I_{(n_1,n_2,\ldots,n_r)}
&= \theta(F_* I_{(n_1-1,n_2,\ldots,n_r)}) + I_1\\
&= 
u^{n_1+\cdots+n_r-1}
  \Bigl(
    F^{n_1+\cdots+n_r-1}_*
    \bigl(
      f_{n_1}^{p^{n_2+\cdots+n_r}}
      \cdots
      f_{n_{r-1}}^{p^{n_r}}
      f_{n_r}\var{A}
    \bigr)
  \Bigr) +  \theta (F_* I_{(n_1-1, n_2, \ldots, n_{r-1}, n_r-1)}) + I_1\\
&= u^{n_1+\cdots+n_r-1}
  \Bigl(
    F^{n_1+\cdots+n_r-1}_*
    \bigl(
      f_{n_1}^{p^{n_2+\cdots+n_r}}
      \cdots
      f_{n_{r-1}}^{p^{n_r}}
      f_{n_r}\var{A}
    \bigr)
  \Bigr)
  + I_{(n_1,\ldots,n_{r-1},n_r-1)}
\end{aligned}
\]
for $n_1 \geq 2$.
Then the equation (\ref{eqn:hrnon-ideal}) follows from
\cref{multi-fedder}.

Finally, we take a sequence of ideals $\{J_r\}$ as in the assertion.
We note that since $\var{A}$ is regular, the Frobenius lift $\phi$ is faithfully flat, thus for an ideal $\mathfrak{a}$ of $A$ with $p \in \mathfrak{a}$ and $g \in A$, we have
\begin{align*}
    (\mathfrak{a} \colon g)^{[p]}
    &=(\mathfrak{a} \colon g)\phi_*A =
    (\mathfrak{a} \colon (g,p))\phi_*A \\
    &=(\mathfrak{a} (\phi_*A) \colon (\phi(g),p))=(\mathfrak{a}^{[p]} \colon (g^p,p)) \\
    &=(\mathfrak{a}^{[p]} \colon g^p).
\end{align*}
We prove
\[
(\m^{[p^{h_0+\cdots+h_{r}}]}\colon (f_{h_0}^{p^{h_1+\cdots+h_{r}}}
  \cdots f_{h_{r}}))=J_{r+1}
\]
by induction on $r \geq 0$.
For $r=0$, it follows from definition.
For $r \geq 1$, we have
\begin{align*}
(\m^{[p^{h_0+\cdots+h_{r}}]} \colon 
   (f_{h_0}^{p^{h_1+\cdots+h_{r}}} \cdots f_{h_{r}}))
&= ((\m^{[p^{h_0+\cdots+h_{r-1}}]} \colon 
     (f_{h_0}^{p^{h_1+\cdots+h_{r-1}}} \cdots f_{h_{r-1}}))^{[p^{h_r}]} 
     \colon (f_{h_r})) \\
&= (J_{r}^{[p^{h_r}]} \colon (f_{h_r})) = J_{r+1}.
\end{align*}
by the induction hypothesis.
Therefore, we have
\[
 (\m^{[p^{h_0+\cdots+h_{r-1}+h}]} \colon f_{h_0}^{p^{h_1+\cdots+h_{r-1}+h}}
  \cdots f_{h_{r-1}}^{p^h})=J_{r}^{[p^h]},
\]
as desired.
\end{proof}

\begin{remark}\label{remark:using-element}
The condition
\[
f_{h_0}^{p^{h_1+\cdots+h_{r-1}+h}}
  \cdots f_{h_{r-1}}^{p^h}f_h
  \notin \m^{[p^{h_0+\cdots+h_{r-1}+h}]}
\]
is equivalent to the existence of an element $a \in A$ such that
\[
u^{h_0+\cdots+h_{r-1}+h}\Bigl(F_*^{h_0+\cdots+h_{r-1}+h}\bigl(af_{h_0}^{p^{h_1+\cdots+h_{r-1}+h}}
  \cdots f_{h_{r-1}}^{p^h}f_h\bigr)\Bigr)=1.
\]
Furthermore, since
\[
u^h\bigl(F^h_*(af_h)\bigr)=u\circ \theta^{h-1}\bigl(F^h_*(af^{p-1})\bigr),
\]
if we set $a_r:=u \circ \theta^{h-1}\bigl(F^h_*(af^{p-1})\bigr)$ and define
\[
a_{i}:=u \circ \theta^{h_{i}-1}\bigl(F^{h_i}_*(a_{i+1}f^{p-1})\bigr)
\]
inductively for $r-1 \geq i \geq 0$, then we obtain
\[
1=u^{h_0+\cdots+h_{r-1}+h}\Bigl(F_*^{h_0+\cdots+h_{r-1}+h}\bigl(af_{h_0}^{p^{h_1+\cdots+h_{r-1}+h}}
  \cdots f_{h_{r-1}}^{p^h}f_h\bigr)\Bigr)=a_0.
\]
\end{remark}

The following proposition provides a criterion giving a sufficient condition for the multi-height to become $1$ from some point on.

\begin{proposition}\label{stable-1}
We use the notation of \cref{notation:Fedder}.
Let $J \subseteq \var{A}$ be an ideal and $n_0, \ldots, n_r \in \Z_{\ge 1}$.
We define ideals $I^J_{(n_0, \ldots, n_r)} \subseteq \var{A}$ inductively as follows.

First, set
\[
I^J_1 := f^{p-1}J + I^{[p]},
\qquad
I^J_n := \theta(F_*I^J_{n-1}) + I_1 \quad (n \ge 2).
\]
Next, suppose that $I^J_{(n_1, \ldots, n_r)}$ has been defined for some $n_1, \ldots, n_r \in \Z_{\ge 1}$.
Then define
\[
I^J_{(1, n_1, \ldots, n_r)} := u(F_*I^J_{(n_1, \ldots, n_r)}) f^{p-1} + I^{[p]},
\qquad
I^J_{(n, n_1, \ldots, n_r)} := \theta(F_*I^J_{(n-1, n_1, \ldots, n_r)}) + I_1 \quad (n \ge 2).
\]

Assume further that $J \subseteq u(F_*I^J_1)$ and the multi-height of $R$ is $(h_0,h_1,\ldots)$.
\begin{enumerate}
\item Then we have
\[
I^J_{(n_0,\ldots,n_r)} \subseteq I^J_{(n_0,\ldots,n_r,1)}
\]
for every $n_0, n_1, \ldots, n_r \in \Z_{\ge 1}$.
Consequently, if  
$I^J_{(h_0,\ldots,h_r)} \nsubseteq \m^{[p]}\var{A}$, then $h_i=1$ for every $i \ge r+1$.
\item If there exists $a \in J$ such that
\[
af_{h_0}^{p^{h_1+\cdots+h_{r-1}+h_r}}
  \cdots f_{h_{r-1}}^{p^{h_r}}f_{h_r}
  \notin \m^{[p^{h_0+\cdots+h_{r-1}+h_r}]},
\]
then $h_i=1$ for every $i \ge r+1$.
\end{enumerate}
\end{proposition}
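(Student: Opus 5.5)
The plan for (1) is an induction on $r$ that peels off the \emph{first} index, using only two facts: the recursive definitions are monotone in the ideal they are fed, and $J \subseteq u(F_*I^J_1)$. Concretely, the maps $\mathfrak{b}\mapsto u(F_*\mathfrak{b})f^{p-1}+I^{[p]}$ and $\mathfrak{b}\mapsto \theta(F_*\mathfrak{b})+I_1$ preserve inclusions of ideals.

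\emph{Base case $r=0$.} I would show $I^J_{(n_0)}\subseteq I^J_{(n_0,1)}$ by induction on $n_0$, where $I^J_{(n_0)}$ means $I^J_{n_0}$. For $n_0=1$ the hypothesis gives
\[
I^J_{(1,1)}=u(F_*I^J_1)f^{p-1}+I^{[p]}\supseteq Jf^{p-1}+I^{[p]}=I^J_1 .
\]
For $n_0\ge 2$, applying $\theta(F_*-)+I_1$ to the inclusion for $n_0-1$ gives
\[
I^J_{(n_0,1)}=\theta(F_*I^J_{(n_0-1,1)})+I_1\supseteq \theta(F_*I^J_{n_0-1})+I_1=I^J_{n_0}.
\]

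\emph{Inductive step.} Assume $I^J_{(n_1,\dots,n_r)}\subseteq I^J_{(n_1,\dots,n_r,1)}$. Applying $u(F_*-)f^{p-1}+I^{[p]}$ gives the claim for $(1,n_1,\dots,n_r)$. Then iterating $\theta(F_*-)+I_1$ gives it for $(n_0,n_1,\dots,n_r)$ with any $n_0\geq 2$, by induction on $n_0$ exactly as in the base case.

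\emph{The consequence in (1).} I would first note $I^J_{(n_0,\dots,n_r)}\subseteq I_{(n_0,\dots,n_r)}$ for every tuple. This follows by the same monotonicity, starting from $I^J_1=f^{p-1}J+I^{[p]}\subseteq I_1$. Now suppose $I^J_{(h_0,\dots,h_r)}\nsubseteq \m^{[p]}\var{A}$. Then
\[
I_{(h_0,\dots,h_r,1)}\supseteq I^J_{(h_0,\dots,h_r,1)}\supseteq I^J_{(h_0,\dots,h_r)}\nsubseteq\m^{[p]}\var{A},
\]
so \Cref{multi-fedder} gives $h_{r+1}=1$, since $1$ is the smallest possible value. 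Because $I^J_{(h_0,\dots,h_{r+1})}=I^J_{(h_0,\dots,h_r,1)}$ is again not contained in $\m^{[p]}\var{A}$, the same argument iterates to give $h_i=1$ for all $i\ge r+1$.

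For (2), I would reduce to (1) by proving the following analogue of \Cref{non-ideal-ver}: for $a\in J$ and $N:=n_0+\cdots+n_r$,
\[
u^{N-1}\Bigl(F^{N-1}_*\bigl(a\,f_{n_0}^{p^{n_1+\cdots+n_r}}\cdots f_{n_{r-1}}^{p^{n_r}}f_{n_r}\bigr)\Bigr)\in I^J_{(n_0,\dots,n_r)}.
\]
The point is that $J$ enters only at the innermost level, through $I^J_1\ni af^{p-1}$. In the computation of \Cref{non-ideal-ver} the innermost generator is never raised to a $p$-power, which matches the identity $u^h(F^h_*(af_h))=u\circ\theta^{h-1}(F^h_*(af^{p-1}))$ of \Cref{remark:using-element}. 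So I would redo the induction of \Cref{non-ideal-ver} while tracking the factor $a$, keeping only the inclusion "$\supseteq$" of the main term and discarding the lower terms.

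Given this inclusion, the hypothesis of (2) together with \Cref{remark:using-element} produces $b\in A$ such that
\[
u^{N}\Bigl(F^{N}_*\bigl(b\,a\,f_{h_0}^{p^{h_1+\cdots+h_r}}\cdots f_{h_r}\bigr)\Bigr)=1,
\qquad N=h_0+\cdots+h_r .
\]
Since $ba\in J$, we get $1\in u(F_*I^J_{(h_0,\dots,h_r)})$. Hence $I^J_{(h_0,\dots,h_r)}\nsubseteq\m^{[p]}\var{A}$, and (1) applies.

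The main obstacle is this bookkeeping of the factor $a$ through the recursion, in particular checking that the $\theta$-steps produce exactly $f_{n}$ with $a$ unraised. I would verify this first in the case $r=0$, and then feed it into the outer induction.
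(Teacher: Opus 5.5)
Your proposal is correct and follows essentially the same route as the paper. For (1), you argue by induction on $r$ with an inner induction on $n_0$, using monotonicity of the two recursions and $J \subseteq u(F_*I^J_1)$, and then compare with $I^J \subseteq I$ and apply \cref{multi-fedder}; for (2), the paper likewise appeals to the computation in \cref{non-ideal-ver} and \cref{remark:using-element} to place the relevant element in $I^J_{(h_0,\ldots,h_r)}$, and your explicit tracking of the factor $a$ (landing in $I^J$ via $u^{N-1}$, then applying one more $u$ to get $1$) is that same argument, stated a bit more carefully than the paper's $u^{N}$ version.
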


\begin{proof}
We prove the first assertion in (1) by induction on $r$.

If $r=0$, then
\[
I^J_{(1,1)}
= u(F_*I^J_1)f^{p-1}+I^{[p]}
\supseteq Jf^{p-1}+I^{[p]}=I_1^J.
\]
Hence, by induction on $n_0$, we obtain 
$I^J_{n_0} \subseteq I^J_{(n_0,1)}$, as desired.

Now assume $r \ge 1$.
Then
\[
I^J_{(1,n_1,\ldots,n_r,1)}
= u(F_*I^J_{(n_1,\ldots,n_r,1)})f^{p-1}+I^{[p]}
\supseteq u(F_*I^J_{(n_1,\ldots,n_r)})f^{p-1}+I^{[p]}
= I^J_{(1,n_1,\ldots,n_r)},
\]
where we use the induction hypothesis in the middle inclusion.
Therefore, by the induction on $n_0$, we obtain
\[
I^J_{(n_0,n_1,\ldots,n_r)} 
\subseteq I^J_{(n_0,n_1,\ldots,n_r,1)},
\]
as claimed.

Next, suppose that  
$I^J_{(h_0,\ldots,h_r)} \nsubseteq \m^{[p]}\var{A}$.
Then
\[
I_{(h_0,\ldots,h_r,1,\ldots,1)}  \supseteq 
I^J_{(h_0,\ldots,h_r,1,\ldots,1)} 
\supseteq I^J_{(h_0,\ldots,h_r)} 
\nsubseteq \m^{[p]}\var{A}.
\]
Hence $I_{(h_0,\ldots,h_r,1,\ldots,1)} \nsubseteq \m^{[p]}\var{A}$,
and by \Cref{multi-fedder}, we must have 
$h_i = 1$ for all $i \ge r+1$, as desired.

Finally, we prove (2).
By assumption, we have
\[
u^{h_0+\cdots+h_{r-1}+h_r}(F^{h_0+\cdots+h_{r-1}+h_r}_*(af_{h_0}^{p^{h_1+\cdots+h_{r-1}+h_r}}
  \cdots f_{h_{r-1}}^{p^{h_r}}f_{h_r}\var{A})) \nsubseteq \m^{[p]}.
\]
By the proof of \cref{non-ideal-ver} and \cref{remark:using-element}, the left-hand side is contained in $I^J_{(h_0,\ldots,h_r)}$, thus the assertion follows from (1).
\end{proof}

Finally, we recall a sufficient condition, formulated in terms of the positive-characteristic quasi-$F^{\infty}$-height, for having $h_i=1$ for all $i\ge 1$.

\begin{proposition}\label{ht=ht^infty}
We use the notation of \cref{notation:Fedder}.
If $h:=\sht(R)=\sht^\infty(\var{R})$, then the multi-height of $R$ is $(h,1,\ldots)$.
\end{proposition}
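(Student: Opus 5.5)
We argue directly from the Fedder-type description in \cref{non-ideal-ver} and reduce the statement to a containment coming from $\sht^\infty(\var{R})=h$. By \cref{remark:ineq-h}, $h_0=h$. It then suffices to show $h_i=1$ for all $i\ge 1$. The natural tool is \cref{stable-1}(2) with $r=0$, or \cref{stable-1}(1) with a suitable ideal $J$. We need an element $a\in J$ satisfying $J\subseteq u(F_*I^J_1)$ and $af_h\notin\m^{[p^h]}$.

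First we recall the Fedder-type characterization of the quasi-$F^\infty$-height of $\var{R}$ in positive characteristic (the $\infty$-version of \cite{kty}*{Theorem 4.11}). It should say that $\sht^\infty(\var{R})\le n$ iff $\theta^{n-1}$-type iterates, applied to the ideal stable under $u(F_*(f^{p-1}\,\cdot))$, escape $\m^{[p]}$. Concretely, we take $J$ to be the largest ideal of $\var{A}$ with $J\subseteq u(F_*(f^{p-1}J + I^{[p]}))$. This is the ``$F$-stable'' ideal attached to $f^{p-1}$, the test-ideal-like object $\bigcap_e$ / $\sum_e u^e(F^e_*(f^{p^e-1}\var{A}))$ built from the Cartier map $u(F_*(f^{p-1}\cdot))$. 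It contains $I$-relevant parts and satisfies $J\subseteq u(F_*I^J_1)$ by construction. The hypothesis of \cref{stable-1} then holds automatically.

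Next we translate $\sht^\infty(\var{R})=h$ into the condition $I^J_{h}\nsubseteq \m^{[p]}\var{A}$. The definition of $I^J_n$ replaces $I_1=f^{p-1}\var{A}+I^{[p]}$ by $f^{p-1}J+I^{[p]}$ at the start, and then iterates $\theta$. By \cref{remark:using-element}, the condition $I^J_h\nsubseteq\m^{[p]}$ is exactly the existence of $a\in J$ with $u\circ\theta^{h-1}(F^h_*(af^{p-1}))=1$. We then expect this to be precisely the Fedder criterion for quasi-$F^\infty$-splitting at level $h$. The $\infty$-version allows the starting element to be arbitrarily Frobenius-divisible, i.e. to lie in the image of all iterates $u^e(F^e_*(f^{p^e-1}\cdot))$, and that image is $J$. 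Given this, \cref{stable-1}(1) with $r=0$ yields $h_i=1$ for $i\ge1$.

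The main obstacle is the middle step: matching the known Fedder-type description of $\sht^\infty$ with the ideals $I^J_n$ defined here. This requires checking that the auxiliary summands $I_1$ and $I^{[p]}$ in the recursion do not spoil the comparison; \cref{non-ideal-ver} suggests they are harmless. We must also check that the stable ideal $J$ satisfies the required self-containment modulo $I^{[p]}$. If no clean $\infty$-Fedder criterion is available, we would instead argue structurally. A splitting of $\var{R}\to Q^{\infty}$-type objects factors compatibly through the pushouts defining $\sQ{R}{(h,1,\ldots,1)}$, since $V^{h-1}$ followed by $F_*\Psi_{R,(1)}$ is controlled by the $F^\infty$-splitting. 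We would then conclude using \cref{multi-fedder}.
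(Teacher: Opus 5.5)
The two ends of your argument are fine. $h_0=h$ is immediate from \cref{remark:ineq-h}, a largest ideal $J$ with $J\subseteq u(F_*(f^{p-1}J+I^{[p]}))$ exists, and \cref{stable-1}(1) with $r=0$ finishes once $I^J_h\nsubseteq\m^{[p]}\var{A}$ is known. The problem is that this condition is a restatement of the claim, not a reduction of it. Unwinding the recursions in \cref{notation:Fedder}, using $u(F_*I^{[p]})=I$ and $f^{p-1}I\subseteq I^{[p]}$, gives $I_{(h,1,\ldots,1)}=I^{J_r}_h$ when there are $r$ ones, where $J_r:=u^r(F^r_*(f^{p^r-1}\var{A}))$. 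These ideals decrease, and $J_r+I$ equals your $J$ for $r\gg0$. So $I^J_h\nsubseteq\m^{[p]}\var{A}$ is equivalent to the multi-height being $(h,1,1,\ldots)$. The entire content of the proposition therefore sits in the middle step, which you only say you ``expect''.

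That step is not supplied, and as phrased it is not right. $I^J_h$ is built from the mixed-characteristic $\theta$ with no $\cap\Ker(u)$, so it depends on the lift and cannot be ``precisely'' a criterion for $\sht^\infty(\var{R})\le h$. For example, the lifts of type $D^0_{2n}$ with $G=y^{2^{i-1}}$ have multi-height $(i+1,1,1,\ldots)$ although $i+1<\sht(\var{R})$.

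What you need is a one-way implication, in one of two forms:
\begin{enumerate}
\item A proved description of $h$-quasi-$F^e$-splitting of $\var{R}$, for every $e$, showing that it forces the characteristic-$p$ ideals $\var{I}_{(h,1,\ldots,1)}$ (any number of ones) to escape $\m^{[p]}\var{A}$. These are the $\cap\Ker(u)$-recursions started from $f^{p-1}J_r+I^{[p]}$. You would then conclude via the inclusion $\var{I}_{\bn}\subseteq I_{\bn}$ of \cref{multi-height-positive-char}.
\item Structurally, a proof that an $h$-quasi-$F^e$-splitting of $\var{R}$ induces a splitting of $\Phi_{R,(h,1,\ldots,1)}$, through the comparison map from the mixed-characteristic objects to the characteristic-$p$ ones (as for the inequality $\sht(R)\le\sht(\var{R})$).
\end{enumerate}
Your closing paragraph only gestures at the second form. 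This comparison is exactly the input the paper takes from \cite[Proposition~4.6]{Yoshikawa25}. Without it, or an actual Fedder-type criterion for quasi-$F^e$-splitting, your argument does not prove the statement.
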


\begin{proof}
It follows from \cite{Yoshikawa25}*{Proposition~4.6}.
\end{proof}

\subsection{Naive multi-height}

Originally, the quasi-$F$-split height was defined for schemes in positive characteristic, rather than in mixed characteristic. The height in positive characteristic is also important for computing the mixed-characteristic height, in that it provides an upper bound for the mixed-characteristic height (independent of the choice of a lift). For this reason, it would be desirable to define the multi-height in positive characteristic as well; unfortunately, we do not currently have an intrinsic definition.

In this subsection, by starting from the Fedder-type criterion \Cref{multi-fedder}, we introduce, in the complete intersection case, a sequence of integers (the \emph{naive multi-height}) that provides an upper bound for the mixed-characteristic height.

\begin{proposition}\label{multi-height-positive-char}
We use the notation of \cref{notation:Fedder}.
We define ideals $\var{I}_{(n_0, \ldots, n_r)} \subseteq \var{A}$ inductively as follows.

First, set
\[
\var{I}_1 := f^{p-1}\var{A} + I^{[p]}, \qquad
\var{I}_n := \theta\bigl(F_*(\var{I}_{n-1} \cap \Ker(u))\bigr) + I_1 
\quad (n \ge 2).
\]
Next, suppose that $\var{I}_{(n_1, \ldots, n_r)}$ has been defined for some $n_1, \ldots, n_r \in \Z_{\ge 1}$.
Then define
\[
\begin{aligned}
\var{I}_{(1, n_1, \ldots, n_r)} 
&:= u(F_* \var{I}_{(n_1, \ldots, n_r)}) f^{p-1} + I^{[p]}, \\[4pt]
\var{I}_{(n, n_1, \ldots, n_r)} 
&:= \theta\bigl(F_*(\var{I}_{(n-1, n_1, \ldots, n_r)} \cap \Ker(u))\bigr)
    + I_1, \qquad (n \ge 2).
\end{aligned}
\]

For all $n_1,\ldots,n_r \in \Z_{\ge 1}$, we have
\[
\var{I}_{(n_1,\ldots,n_r)} \subseteq I_{(n_1,\ldots,n_r)},
\]
and the ideals $\var{I}_{(n_1,\ldots,n_r)}$ are invariant under replacing the regular sequence
$a_1,\ldots,a_c$ by any other regular sequence $a_1',\ldots,a_c'$
satisfying $a_i' \equiv a_i \pmod{p}$ for every $1 \le i \le c$.
\end{proposition}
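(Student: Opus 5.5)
The inclusion $\var{I}_{(n_1,\ldots,n_r)} \subseteq I_{(n_1,\ldots,n_r)}$ goes by induction on the total length, following the same recursion that defines both families. For length one, $\var{I}_1 = I_1$. For $n \ge 2$, monotonicity of $\theta$ and of $F_*$ together with $\var{I}_{n-1}\cap\Ker(u) \subseteq \var{I}_{n-1} \subseteq I_{n-1}$ gives
\[
\var{I}_n = \theta\bigl(F_*(\var{I}_{n-1}\cap \Ker(u))\bigr)+I_1 \subseteq \theta(F_*I_{n-1})+I_1 = I_n .
\]
The multi-index steps are handled the same way. The case $(1,n_1,\ldots,n_r)$ uses $u(F_*\var{I}_{\bn'})f^{p-1} \subseteq u(F_*I_{\bn'})f^{p-1}$, and the case $(n,n_1,\ldots,n_r)$ with $n\ge 2$ uses the $\theta$-step exactly as above. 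Each step is a direct comparison of the defining formulas, with the induction first on $r$ and then on the leading index.

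For the invariance, I first record which ingredients depend on the lift. The ideal $I=(a_1,\ldots,a_c)\var{A}$, the ideal $I^{[p]}$, the element $f^{p-1}$ modulo $p$, and hence $I_1$ all depend only on the $a_i \bmod p$. The map $u$ does not involve the lift at all. So the only dependence on the lift enters through $\theta(F_*a)=u(F_*(\Delta_1(f^{p-1})a))$.

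The plan is therefore to show the following: if $f'=a_1'\cdots a_c'$ with $a_i'\equiv a_i \pmod p$, then for every $a\in \Ker(u)$ (and indeed for every $a$ in the relevant ideal) one has $\theta'(F_*a)\equiv \theta(F_*a) \bmod I_1$. Once this holds, the claim follows by induction along the recursion: $\var{I}$ and $\var{I}'$ agree at each step because both are $\theta(\ldots\cap\Ker u)+I_1$ with the same input ideal.

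To compare $\Delta_1(f'^{p-1})$ and $\Delta_1(f^{p-1})$, write $g=f^{p-1}$ and $g'=g+p\varepsilon$. The standard $\delta$-ring identity gives
\[
\Delta_1(g+p\varepsilon)=\Delta_1(g)+\Delta_1(p\varepsilon)-\sum_{0<i<p}\tfrac{1}{p}\tbinom{p}{i}g^i(p\varepsilon)^{p-i}.
\]
The sum vanishes modulo $p$. The middle term satisfies $\Delta_1(p\varepsilon)=p^{p-1}\varepsilon^p-\phi(\varepsilon)\equiv -\varepsilon^p \pmod p$ (for $p=2$ there is the extra term $2\varepsilon^2$, still $\equiv 0$; in general $\Delta_1(p\varepsilon)\equiv -\phi(\varepsilon)\equiv-\varepsilon^p$). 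Hence
\[
\Delta_1(g')\equiv\Delta_1(g)-\varepsilon^p \pmod p,
\]
and therefore
\[
\theta'(F_*a)-\theta(F_*a)=-u(F_*(\varepsilon^p a))=-\varepsilon\,u(F_*a)=0
\]
for $a\in\Ker(u)$. This is exactly why the definition intersects with $\Ker(u)$.

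The remaining obstacle is a subtlety: $f'^{p-1}$ and $f^{p-1}$ agree modulo $p$, so it might seem that only the computation of $\Delta_1$ matters. But $I_1$ itself appears in the recursion, and the $\theta$-step also needs to make sense on the right ideals. I expect the argument above to go through directly, with the $\varepsilon^p$ term pulled out of $u$ by $p^{-1}$-linearity. The one point to double-check is that $\Delta_1$ is computed with respect to the fixed $\phi$, so $\phi(\varepsilon)\equiv\varepsilon^p$ holds; this is true since $\phi$ lifts Frobenius.
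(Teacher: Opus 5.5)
Your proposal is correct and takes the same route as the paper. The inclusion is a direct comparison of the two recursions. For invariance, $\Delta_1(f'^{p-1}) \equiv \Delta_1(f^{p-1})$ modulo $p$ and $p$-th powers (your explicit difference $-\varepsilon^p$), so $p^{-1}$-linearity of $u$ leaves $\theta$ unchanged on $\Ker(u)$; meanwhile $I$, $I^{[p]}$, $I_1$ and $u$ depend only on reductions mod $p$.

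One small slip: with the convention $\Delta_1(a)=(a^p-\phi(a))/p$, the binomial sum in your addition formula enters with a plus sign, not a minus. This does not matter, since that sum is $\equiv 0 \pmod p$ when one summand is $p\varepsilon$.
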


\begin{proof}
The inclusion $\var{I}_{(n_1,\ldots,n_r)} \subseteq I_{(n_1,\ldots,n_r)}$ follows directly from the definition.
Let $f' \in A$ satisfy \(f \equiv f' \pmod{p}\).
Then we have
\[
\Delta_1(f) \equiv \Delta_1(f') \pmod{F(\var{A})}.
\]
By induction on the construction of $\var{I}_{(n_1,\ldots,n_r)}$, 
it follows that $\var{I}_{(n_1,\ldots,n_r)}$ depends only on the reduction of \(f\) modulo \(p\),
as claimed.
\end{proof}

\begin{definition}
\label{defn:naive_heights}
We use the same notation as in \cref{multi-height-positive-char}.
Assume that $\overline{R} = \overline{A}/\overline{f}$ is quasi-$F$-split of height $h$.
We define the \emph{naive multi-height of $\var{R}$ with respect to $(A,\phi,  \var{f})$} as a sequence $\boldsymbol{m} = (m_i)_{i \geq 0} \in \prod \Z_{\geq 1}$ determined by the equalities
\[
m_r = \inf \left\{ m \mid \var{I}_{(m_0, \ldots, m_{r-1}, m)} \nsubseteq \m^{[p]}\var{A} \right\}.
\]
for any $r \geq 0$.
Note that the infimum appearing above is finite and less than or equal to $h$.
Indeed, since $\var{I}_h \nsubseteq \m^{[p]}$, we have
\[
\var{I}_{(1,h)}=u(F_*\var{I}_h)f^{p-1}+I^{[p]}=f^{p-1}\var{A}+I^{[p]}=\var{I}_1,
\]
and in particular, we have $\var{I}_{(m_0,\ldots,m_{i-1},h)} =  \var{I}_{(m_0,\ldots,m_{i-1})}$.
Note that this definition may depend on the choice of lifts of Frobenius $\phi$. 
Let $(h_i)_{i \geq 0}$ be the multi-height of $R$.
Then \cref{multi-height-positive-char} shows that $(m_i)$ is greater than or equal to $(h_i)$ with respect to the lexicographical order. 
\end{definition}

\begin{remark}
\label{rem:naive_first}
The first term $m_0$ of the naive multi-height $\boldsymbol{m}$ of $\var{R}$ is equal to the quasi-$F$-split height of $\var{R}$ by the Fedder-type criteria \cite{kty}*{Theorem~4.11}.
In particular, $m_0$ depends only on $\var{R}$.
\end{remark}

\subsection{Pre-periodic}
The multi-height of a Noetherian $\Z_{(p)}$-algebra is conjectured to be pre-periodic (, provided that it is quasi-$F$-split.).
Note that the pre-periodicity of multi-height is equivalent to $\ppt(-,p) \in \Q$ in this setting.
If $\bn =  (n_0, n_1, \ldots)\in \prod \Z_{\geq 1}$ 
is pre-periodic, i.e., there exists $r \in \Z_{\geq 0}$ and $s \in \Z_{>0}$ such that $n_{r+i} = n_{r +i+s}$ for any $i\geq 0$,
then we write
\[
\bn = (n_0, \ldots, n_{r-1}, \overline{n_{r}, n_{r+1}, \cdots, n_{r+s-1} }) 
\]
as a repeating decimal.

The following proposition is useful for computing the multi-height by using a computer algebra system.
\begin{prop}
\label{prop:repeating_multiheight}
We use \cref{notation:Fedder}.
Let 
\[
\bn = (n_i)_{i\geq0}=  (n_0, \ldots, n_{r-1}, \overline{n_{r}, n_{r+1}, \cdots, n_{r+s-1} }) \in \prod \Z_{\geq 1}
\]
be a pre-periodic sequence.
Assume that there exists a positive integer $t$ that satisfies the following:
\begin{enumerate}
    \item 
For every $0 \leq m\leq r+ts-1$, we have
\begin{equation}
\label{eqn:nm}
n_m = \inf \{ n \mid I_{(n_0, \ldots, n_{m-1}, n)} \not\subset \m^{[p]}\var{A} \}.
\end{equation}
\item For every $0 \leq u \leq s-1$, we have
\begin{eqnarray*}
&&I_{(\underbrace{\scriptstyle n_r, ..., n_{r+s-1}, \ldots,  n_r, \ldots, n_{r+s-1}}_{\text{$t-1$ times}}, n_{r}, \ldots, n_{r+u})} \\
&=&I_{(\underbrace{\scriptstyle n_r, ..., n_{r+s-1}, \ldots,  n_r, \ldots, n_{r+s-1}}_{\text{$t$ times}}, n_{r}, \ldots, n_{r+u})}.
\end{eqnarray*}
\item 
For every $0 \leq u \leq s-1$ satisfying $n_{r+u} \geq 2$, we have
\begin{eqnarray*}
&&I_{(\underbrace{\scriptstyle n_r, ..., n_{r+s-1}, \ldots,  n_r, \ldots, n_{r+s-1}}_{\text{$t-1$ times}}, n_{r}, \ldots, n_{r+u-1}, n_{r+u}-1)} \\
&=&I_{(\underbrace{\scriptstyle n_r, ..., n_{r+s-1}, \ldots,  n_r, \ldots, n_{r+s-1}}_{\text{$t$ times}}, n_{r}, \ldots, n_{r+u-1}, n_{r+u}-1)}.
\end{eqnarray*}
\end{enumerate}
Then the multi-height of $R$ is equal to $\bn$.
\end{prop}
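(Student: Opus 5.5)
We will reduce the condition \eqref{eqn:nm} at every index $m \geq r+ts$ to the same condition at $m-s$. Then induction on $m$, starting from hypothesis (1) for $0 \le m \le r+ts-1$, shows that \eqref{eqn:nm} holds for all $m$. By \cref{multi-fedder} this says exactly that $\bn$ is the multi-height of $R$.

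\emph{Step 1: recursive structure.} For $a \in \Z_{\ge 1}$ and an ideal $K \subseteq \var{A}$, set $G_1(K) := u(F_*K)f^{p-1} + I^{[p]}$ and $G_a(K) := \theta(F_*G_{a-1}(K)) + I_1$ for $a \ge 2$. Put $I_{()} := \var{A}$. Since $u$ is surjective, $G_1(\var{A}) = f^{p-1}\var{A} + I^{[p]} = I_1$. The recursions \eqref{eqn:defn1} and \eqref{eqn:defn12} then give
\[
I_{(a, n_1, \ldots, n_k)} = G_a\bigl(I_{(n_1, \ldots, n_k)}\bigr),
\]
and hence $I_{(n_0,\ldots,n_k)} = G_{n_0}\circ\cdots\circ G_{n_k}(\var{A})$. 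Write $P := (n_r,\ldots,n_{r+s-1})$ for the period block and $G_P := G_{n_r}\circ\cdots\circ G_{n_{r+s-1}}$. For any tuple $Y$ we have $I_{(P,Y)} = G_P(I_Y)$.

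\emph{Step 2: propagating the stabilization.} Let $Y$ be either $(n_r,\ldots,n_{r+u})$ or $(n_r,\ldots,n_{r+u-1},n_{r+u}-1)$, the latter only when $n_{r+u}\ge 2$. Hypotheses (2) and (3) state $I_{(P^{t-1},Y)} = I_{(P^t,Y)}$. Applying $G_P^{k}$ gives $I_{(P^{t-1+k},Y)} = I_{(P^{t+k},Y)}$ for all $k \ge 0$. Applying $G_{n_0}\circ\cdots\circ G_{n_{r-1}}$ then gives
\[
I_{(n_0,\ldots,n_{r-1},P^{q},Y)} = I_{(n_0,\ldots,n_{r-1},P^{q-1},Y)} \quad \text{for all } q \ge t .
\]

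\emph{Step 3: reduction of the infimum.} Let $m = r+qs+u \ge r+ts$ with $0\le u\le s-1$, so $q \ge t$. By pre-periodicity, $(n_0,\ldots,n_{m-1}) = (n_0,\ldots,n_{r-1},P^q,n_r,\ldots,n_{r+u-1})$ and $n_m = n_{m-s} = n_{r+u}$. By \cref{non-ideal-ver}, $I_{(\ldots,n-1)} \subseteq I_{(\ldots,n)}$, so $n \mapsto I_{(n_0,\ldots,n_{m-1},n)}$ is increasing. Therefore \eqref{eqn:nm} at $m$ is equivalent to two conditions: $I_{(n_0,\ldots,n_{m-1},n_m)} \not\subseteq \m^{[p]}\var{A}$, and, if $n_m \ge 2$, $I_{(n_0,\ldots,n_{m-1},n_m-1)} \subseteq \m^{[p]}\var{A}$. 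By Step 2 with $Y$ as above, both ideals equal the corresponding ideals for index $m-s$ with $q$ replaced by $q-1$. Since $m-s \ge r+(t-1)s$, those conditions hold by the induction hypothesis, or by (1) when $m-s \le r+ts-1$.

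The main obstacle is setting up Step 1 correctly: the definitions in \cref{notation:Fedder} must be genuinely of the form ``function of the head entry applied to the tail ideal''. This includes the base case $I_n$, which we handle via $I_{()}=\var{A}$ and the surjectivity of $u$. We also need the monotonicity in the last entry from \cref{non-ideal-ver}, including the case $n_m - 1 = 0$ with the convention stated there, so that only the two boundary values $n_m$ and $n_m-1$ have to be checked.
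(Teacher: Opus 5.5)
Your proposal is correct and takes essentially the paper's route. The paper also propagates hypotheses (2) and (3) through further periods and the pre-period via the recursive definition of $I_{(n_0,\ldots,n_k)}$, and it also reduces \eqref{eqn:nm} to the two boundary ideals at $n_m$ and $n_m-1$. The differences are cosmetic: the paper jumps directly from index $m=r+vs+u$ to index $r+(t-1)s+u\le r+ts-1$ and applies (1), whereas you step down by $s$ and induct, and you state explicitly the monotonicity in the last entry that the paper uses tacitly when it says ``it suffices to show''.
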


\begin{proof}
By \cref{multi-fedder}, it suffices to verify \eqref{eqn:nm} for every non-negative integer $m$.
By (1), we may assume that $m > r+ts-1$.
We may write
\[
m = r + vs + u
\]
for $v\geq t$ and some $0 \leq u \leq s-1$.

By (2) and the definition of $I_{(n_{1}, \ldots, n_m)}$, we obtain
\begin{eqnarray*}
I_{(n_0, n_1, \ldots, n_{r+(t-1)s+u})}
&=&I_{(n_0, n_1, \ldots, n_{r-1},\underbrace{\scriptstyle n_r, ..., n_{r+s-1}, \ldots,  n_r, \ldots, n_{r+s-1}}_{\text{$t-1$ times}}, n_r ,\ldots, n_{r+u})} \\
&=&I_{(n_0, n_1, \ldots, n_{r-1},\underbrace{\scriptstyle n_r, ..., n_{r+s-1}, \ldots,  n_r, \ldots, n_{r+s-1}}_{\text{$t$ times}}, n_r ,\ldots, n_{r+u})} \\
&=& I_{(n_0, n_1, \ldots, n_{r-1},\underbrace{\scriptstyle n_r, ..., n_{r+s-1}, \ldots,  n_r, \ldots, n_{r+s-1}}_{\text{$v$ times}}, n_r ,\ldots, n_{r+u})} 
=I_{(n_0, \ldots, n_m)}.
\end{eqnarray*}
Combining with (1), we obtain $I_{(n_0, \ldots, n_m)} \not\subset \m^{[p]}\var{A}$, i.e., we have 
\[
n_m \geq \inf \{ n \mid I_{(n_0, \ldots, n_{m-1}, n)} \not\subset \m^{[p]}\var{A} \}.
\]
We now show the reverse inequality.
We may assume $n_m \geq 2.$
It suffices to show $I_{(n_0, \ldots, n_{m-1}, n_m-1)} \subset \m^{[p]}\var{A}$.
By (3) and the definition of $I_{(n_1, \ldots, n_m)}$, we obtain
\begin{eqnarray*}
I_{(n_0, n_1, \ldots, n_{r+(t-1)s-1+u},  n_{r+(t-1)s+u}-1)}
&=&I_{(n_0, n_1, \ldots, n_{r-1},\underbrace{\scriptstyle n_r, ..., n_{r+s-1}, \ldots,  n_r, \ldots, n_{r+s-1}}_{\text{$t-1$ times}}, n_r ,\ldots, n_{r+u-1}, n_{r+u}-1)} \\
&=&I_{(n_0, n_1, \ldots, n_{r-1},\underbrace{\scriptstyle n_r, ..., n_{r+s-1}, \ldots,  n_r, \ldots, n_{r+s-1}}_{\text{$t$ times}}, n_r ,\ldots, n_{r+u-1}, n_{r+u}-1)} \\
&=& I_{(n_0, n_1, \ldots, n_{r-1},\underbrace{\scriptstyle n_r, ..., n_{r+s-1}, \ldots,  n_r, \ldots, n_{r+s-1}}_{\text{$v$ times}}, n_r ,\ldots, n_{r+u-1}, n_{r+u}-1)} \\
&=&I_{(n_0, \ldots, n_{m-1}, n_m-1)}.
\end{eqnarray*}
Combining with (1), we obtain $I_{(n_0, \ldots, n_{m-1}, n_{m}-1)} \subset \m^{[p]}\var{A}$ as desired.
\end{proof}

\begin{remark}
\label{rem:repeating_naive_multiheight}
A similar proposition to \Cref{prop:repeating_multiheight} also holds for the naive multi-height, and the proof is entirely analogous.
\end{remark}

\section{List of perfectoid pure thresholds of rational double points}

In this section, we discuss $\ppt (-,p)$ for lifts of rational double points (RDPs).
To state our results, we first clarify the meaning of the terms "RDPs" and "lifts".
For simplicity, we use these terms only in the setting of complete local rings:
\begin{definition}
Let $k$ be an algebraically closed field of characteristic $p>0$, and let \(\var{R}\) be a complete local ring over \(k\).
\begin{enumerate}
\item 
We say that \(\var{R}\) is an \emph{RDP} if it is isomorphic to the completed local ring of a non-smooth canonical surface singularity over \(k\).
Equivalently, by the classification in \cite[Section 3]{Art}, $\var{R}$ is isomorphic to the complete local ring defined by one of the equations listed there.
\item
A \emph{$W(k)$-lift} of $\var{R}$ is a complete local $W(k)$-flat algebra $R$ such that $R /pR \simeq \var{R}.$
\end{enumerate}
\end{definition}

\begin{remark}
More precisely, the notion of a \(W(k)\)-lift used here should more precisely be called a \emph{complete local} \(W(k)\)-lift.
However, if there exists a local \(W(k)\)-lift \(R\) of \(\var{R}\), possibly non-complete, then the proof of \cite[Lemma 4.8]{p-pure} shows that
\[
\operatorname{ppt}(R,p)=\operatorname{ppt}(\widehat{R},p).
\]
Therefore, for the purpose of computing \(\operatorname{ppt}(-,p)\), it is enough to restrict attention to the complete case.
\end{remark}


\begin{proposition}\label{lift-RDP}
Let $k$ be an algebraically closed field of characteristic $p>0$.
Let $R$ be a $W(k)$-lift of an RDP over $k$.
Then $R \simeq W(k)[[x,y,z]]/(f)$, where $f$ is a lift of an equation in the list in \cite{Art}*{Section~3}.
\end{proposition}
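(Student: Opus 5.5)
The plan is to first find a regular complete local $W(k)$-algebra of dimension $4$ surjecting onto $R$, and then show that the kernel is principal.

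Since $\var{R} \simeq k[[x,y,z]]/(\var{f})$ for an equation $\var{f}$ from the list in \cite{Art}*{Section~3}, the maximal ideal of $\var{R}$ is generated by the images of $x,y,z$. Choose lifts $X,Y,Z \in R$ of these elements. Then the maximal ideal $\m_R$ is generated by $p,X,Y,Z$. Since $R$ is complete local and $W(k)$ is complete with residue field $k$, there is a continuous $W(k)$-algebra homomorphism
\[
\pi \colon A := W(k)[[x,y,z]] \to R, \qquad x \mapsto X,\ y \mapsto Y,\ z \mapsto Z.
\]
The map $\pi$ is surjective: it is surjective modulo $\m_A$, hence by completeness (complete Nakayama, since $R$ is a finite $A$-module, being finite modulo $\m_A R$) it is surjective. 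Moreover, $\pi \bmod p$ is the given isomorphism $k[[x,y,z]]/(\var{f}) \simeq \var{R}$.

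Set $K := \Ker \pi$. Since $R$ is $W(k)$-flat, that is, $p$-torsion free, applying $-\otimes_A A/p$ to $0 \to K \to A \to R \to 0$ gives exactness of
\[
0 \to K/pK \to k[[x,y,z]] \to \var{R} \to 0,
\]
because $\mathrm{Tor}_1^A(R,A/p)$ is the $p$-torsion of $R$, which vanishes. Hence $K/pK = (\var{f})$. Take $f \in K$ lifting $\var{f}$. Then $K = fA + pK$, and Nakayama gives $K = (f)$. By construction $f$ is a lift of $\var{f}$, which gives the claim.

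The main subtlety is the Tor-vanishing step, where flatness is essential; everything else is formal. I would also note that if one wants $f$ to be of the same shape as the listed equation with coefficients lifted, this can be arranged by modifying $f$ by multiples of $p$ inside $K$ up to a unit. The statement only requires $f$ to reduce to the listed equation, so this is not needed.
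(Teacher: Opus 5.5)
Your proof is correct and is essentially the paper's proof: both build the surjection $W(k)[[x,y,z]]\to R$, pick a lift $f$ of $\var{f}$ in the kernel, and conclude by reduction mod $p$ plus Nakayama. Your Tor-vanishing step also shows that the final step rests on the $p$-torsion-freeness of $R$, not on that of $W(k)[[x,y,z]]/(f)$, which is what the paper's write-up invokes.

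Your closing aside is wrong and should be dropped. Multiples of $p$ lying in $K=(f)$ only change $f$ by a unit factor. Moreover, different choices of $G$ in $f+pG$ can give non-isomorphic lifts, since they have different $\ppt(-,p)$ in Table~\ref{table:ppts}. So $f$ cannot in general be brought to the shape of the listed equation with lifted coefficients.
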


\begin{proof}
Since $R/p$ is an RDP, there exists $\var{f} \in k[[x,y,z]]$ in the list in \cite{Art}*{Section~3} such that
\[
R/p \simeq k[[x,y,z]]/(\var{f}).
\]
Fix lifts $X,Y,Z$ of $x,y,z$ in $R$, and consider the ring homomorphism
\[
W(k)[x,y,z] \to R, \qquad x,y,z \mapsto X,Y,Z.
\]
Since $R$ is complete and the homomorphism is surjective modulo $p$, it induces a surjective ring homomorphism
\[
\varphi \colon W(k)[[x,y,z]] \to R.
\]

Take a lift $f$ of $\var{f}$ in $W(k)[[x,y,z]]$ with $f \in \Ker(\varphi)$.
Since $\var{f}$ is a regular element, the sequence $p,f$ is a regular sequence, and in particular, the ring $W(k)[[x,y,z]]/(f)$ is $p$-torsion free.
Then the induced ring homomorphism
\[
\varphi' \colon W(k)[[x,y,z]]/(f) \to R
\]
is an isomorphism modulo $p$.
Since $W(k)[[x,y,z]]/(f)$ is $p$-torsion omplete, it follows that $\varphi'$ is an isomorphism.
\end{proof}

In the following of this paper, we freely use \cref{lift-RDP}.




First, we compute the perfectoid pure threshold of lifts of RDPs, excluding those of type $D_{2n}^r$ and $D_{2n+1}^r$ for $1 \leq r \leq n-1$ in characteristic $2$.
\begin{proposition}
Let $k$ be an algebraically closed field of characteristic $p>0$ and let $\var{R} :=k[[x,y,z]]/(\var{f})$ be an RDP.
Then we have the following:
\begin{enumerate}
    \item 
If $\var{R}$ is a taut RDP, then $\ppt (R,p) =1$ for any  $W(k)$-lift $R$ of $\var{R}$.
    \item 
If $\var{R}$ is a non-taut RDP that is neither of type $D_{2n}^r$ nor $D_{2n+1}^r$ ($1 \leq r \leq n-1$), then the set of values taken by $\ppt (R,p)$ for  $W(k)$-lifts $R$ of $\var{R}$ is given in Table \ref{table:ppts}.
\end{enumerate}
\end{proposition}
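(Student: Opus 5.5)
The plan is to reduce both parts to computing the multi-height of $R$ and then apply \Cref{mult-ht-to-ppt}. By \cref{lift-RDP}, every $W(k)$-lift has the form $R=W(k)[[x,y,z]]/(f)$ with $f=\var{f}+pg$, where $\var{f}$ is (a lift of) one of Artin's normal forms and $g\in W(k)[[x,y,z]]$ is arbitrary. We work in the setting of \cref{notation:Fedder} with $A=W(k)[[x,y,z]]$, $c=1$, and the standard Frobenius lift $\phi(x)=x^p$, $\phi(y)=y^p$, $\phi(z)=z^p$ (extended by the Witt vector Frobenius on $W(k)$). Here $u$ is the usual trace generator, sending $F_*(x^ay^bz^c)$ to $x^{(a-p+1)/p}y^{(b-p+1)/p}z^{(c-p+1)/p}$ when all three exponents are $\equiv -1 \pmod p$, and to $0$ otherwise.

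For (1), I will use Artin's list \cite{Art}*{Section~3} to identify the taut RDPs in each characteristic: these are the types whose analytic class is determined by the dual graph. For each such type I check directly from the normal form that $\var{f}^{p-1}\notin\m^{[p]}$, i.e. $\var{R}$ is $F$-split by Fedder's criterion. This is a finite monomial check. Once $F$-splitting holds, we get $\sht(\var{R})=\sht^\infty(\var{R})=1$, and since the mixed-characteristic height satisfies $1\le\sht(R)\le\sht(\var{R})$ we also get $\sht(R)=1$. Then \Cref{ht=ht^infty} gives multi-height $(1,1,1,\ldots)$ for every lift. Because every $m\ge1$ is a partial sum $\sum_{i=0}^{m-1}h_i$, \Cref{mult-ht-to-ppt} gives $c_m=p-1$ for all $m$, hence
\[
\ppt(R,p)=\sum_{m\ge1}\frac{p-1}{p^m}=1,
\]
independently of $g$.

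For (2), I proceed type by type through the remaining non-taut RDPs: types $E_6^0,E_7^0,E_7^1,E_8^0,\ldots$ in characteristics $2,3,5$, and types $D_n^0$ in characteristic $2$.
\begin{enumerate}
\item For each type, compute the $F$-split height $m_0=\sht(\var{R})$, and hence the possible $h_0$ by \cref{rem:naive_first}.
\item Write $f=\var{f}+pg$. Expand $\Delta_1(f^{p-1})$ modulo $p$ and reduce it modulo $\m^{[p]}$ as far as needed. This produces the elements $f_h$ of \cref{non-ideal-ver}, and it shows that only finitely many coefficients of $g$ (the terms of low weight with respect to the grading making $\var{f}$ quasi-homogeneous) affect the answer.
\item Using the colon-ideal description $h_r=\inf\{h\mid f_h\notin J_r^{[p^h]}\}$, determine $h_0,h_1,\ldots$ as a function of those coefficients.
\item When a monomial $a\in J$ is found as in \Cref{stable-1}(2), conclude that the tail of the multi-height is constantly $1$.
\item For the genuinely periodic cases, verify the finitely many ideal equalities of \cref{prop:repeating_multiheight}, with Macaulay2 if necessary.
\item Sum the resulting series via \Cref{mult-ht-to-ppt} to obtain the table entries, and exhibit explicit choices of $g$ attaining each value.
\end{enumerate}

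The main obstacle is step (2)/(3): controlling $\Delta_1(f^{p-1})$ uniformly in the lift $g$. My first attempt will be a weighted-degree argument. Terms of $g$ of sufficiently high weight contribute only elements of $J_r^{[p^h]}$, so the case split reduces to the finitely many low-weight coefficients, which I then treat as a case distinction according to which of them vanish.
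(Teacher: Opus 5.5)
\paragraph{Part (1).} This is correct and amounts to the paper's argument. The paper simply invokes \cite{p-pure}*{Theorem~6.6} for an $F$-pure special fibre, whereas you go through multi-heights. Note that \cref{remark:ineq-h} already forces $h_i=1$ for all $i$ once $\sht(R)=1$, so \cref{ht=ht^infty} is not needed. The same argument covers the non-taut $F$-pure types: $E_6^1,E_7^3,E_8^4$ for $p=2$, $E_6^1,E_7^1,E_8^2$ for $p=3$, and $E_8^1$ for $p=5$.

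\paragraph{Part (2).} There is a genuine gap in showing that \emph{no other} values occur. That requires determining the whole multi-height for \emph{every} lift $g$, and neither of your uniformity tools does this.
\begin{enumerate}
\item \Cref{prop:repeating_multiheight}, with or without Macaulay2, certifies the multi-height of one fixed $f$. It shows that the exhibited $g$ realize the tabulated values, but says nothing about an arbitrary lift in a given branch.
\item Your weighted-degree truncation is sound only at level $r=0$. There $J_0=\m$ and $h\le\sht(\var R)$, so only monomials of $f_h$ of bounded weight can avoid $\m^{[p^h]}$, and only finitely many coefficients of $g$ matter. For $r\ge1$, however, $J_r$ is itself built from $g$. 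The slack between the weight of $f_{h_0}^{p^{h_1+\cdots+h}}\cdots f_h$ and the socle weight of $\m^{[p^{h_0+\cdots+h_{r-1}+h}]}$ grows with $h_0+\cdots+h_{r-1}$. So there is no a priori uniform truncation, and nothing in your plan handles infinitely many later steps for infinitely many $g$.
\item $D^0_{2n}$ and $D^0_{2n+1}$ are infinite families, so any computer check must be replaced by an argument uniform in $n$.
\item A smaller point: the case split is on polynomial conditions in the coefficients, such as $G_{030}\not\equiv G_{200}$ for $E_7^1$, not merely on which coefficients vanish.
\end{enumerate}

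\paragraph{The missing idea.} You never need to go beyond the first level, because a dichotomy holds for all lifts at once.
\begin{enumerate}
\item Either some witness $a$ for $h_0$, as in \cref{remark:using-element}, can be chosen in an ideal $J$ with $J\subseteq u(F_*I^J_1)$. Then \Cref{stable-1} gives $h_i=1$ for $i\ge1$.
\item Or every such witness is a unit, i.e.\ $J_1=(\m^{[p^{h_0}]}:f_{h_0})=\m=J_0$. Then the recursion in \cref{non-ideal-ver} gives $J_r=\m$ and $h_r=h_0$ for all $r$, simultaneously for every lift in that branch. This is what the paper's ``repeating this argument'' does, using $u(F_*(f^{p-1}A))\subseteq\m$ and $h_i\le h_0$ from \cref{remark:ineq-h}.
\end{enumerate}

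To land in the first alternative uniformly, the paper uses parity of exponents rather than weights. For $D^0_{2n}$, modulo $2$ one has $\Delta_1(f_G)\equiv x^2yz^2+xy^nz^2+x^3y^{n+1}+G^2$. Hence every monomial of $f_h$ has even $z$-degree, a monomial witness must lie in $(z)$, and $(z)\subseteq u(F_*I^{(z)}_1)$. This gives $\ppt(R,p)=1/2^{h_0-1}$ with $h_0\le\lceil\log_2 n\rceil+1$, for every $n$ and every $G$. The same works with $(x)$ for $D^0_{2n+1}$, and with $(x,z)$ or $(x,y^2,z)$ for the $E$-types. The case $h_0=\sht(\var R)$ is settled by \cite{TY26}*{Theorem~A} together with \cref{ht=ht^infty}. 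Once this dichotomy is in place, your truncation is only needed at level $0$, where it is valid.
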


\begin{proof}
(1) follows from \cite{p-pure}*{Theorem~6.6} since $\var{R}$ is $F$-pure.

We shall prove (2).
By the same proof as in (1), we obtain $\ppt (R,p) = 1$ for any $W(k)$-lift of $E_6^1$, $E_7^3$, and $E_8^4$ type RDPs in characteristic $2$, $E_6^1$, $E_7^1$, $E_8^2$ type RDPs in characteristic $3$, and $E_8^1$ type RDP in characteristic $5$.

Let $R := W(k)[[x,y,z]]/(f)$, where $f$ is the natural lift of $\var{f}$ as in Table \ref{table:ppts}.
In the following, we use \Cref{mult-ht-to-ppt} and Remark \ref{remark:using-element} freely.

In the following, we discuss the case where $p=2$.
We consider the case of type $D_{2n}^0$.
Let $f_G = f + pG \in W(k)[[x,y,z]]$ be a lift of $\var{f}$, where $G \in W(k)[[x,y,z]]$ is an arbitrary element.
Let $(h_i)_{i\geq 0} \in \prod \Z_{\geq 1}$ be the multi-height of $R_G:=W(k)[[x,y,z]]/(f_G)$.
Note that, by \cite[Section 4.7]{KTY25} and \cite[Proposition 4.6]{Yoshikawa25}, we have $1 \leq h_0 \leq  \lceil \log_2 n \rceil +1$.
We assume that $h_0 \neq 1$.
We have $I_{h_0} \not\subset \m^{[p]}$.
Therefore, there exists an element $a_1 \in W(k)[[x,y,z]]$ such that 
\[
u^{h_0-1}(F^{h_0-1}_*(a_1 f_{G} \Delta_1 (f_{G})^{1+ \cdots + 2^{h_0-2}})) \notin \m^{[p]}.
\]
Since the $z$-degree of each term of $f_{G} \Delta_1 (f_{G})^{1+ \cdots + 2^{h_0-2}}$ is even, we have $a_1 \in (z)$.
Therefore, $I_{h_0}^{(z)} \not\subset \m^{[p]}$.
Note that $(z) \subset u ( F_* I_1^{(z)})$.
By \Cref{stable-1}, we have $h_i = 1$ for $i\geq 1$.
Therefore, the only possible values of $\ppt(-,p)$ are of the form $1/2^i$ ($1 \leq i \leq \lceil \log_2 n \rceil$).
One can verify directly that the $G$ listed in Table \ref{table:ppts} yield the desired values of $\ppt(-,p)$.

The case of type $D_{2n+1}^0$ follows in the same way by using $x$ instead of $z$.
The cases of types $E_6^0$, $E_7^0$, $E_8^0$ also follow in the same way.

Other cases are a bit involved.
It can be checked directly, as before, that the values of $\ppt(-,p)$ listed in Table \ref{table:ppts} are realized by the corresponding $G$.
Therefore, in what follows, we present only the argument that reduces the candidates for $\ppt(-,p)$ to those listed in Table \ref{table:ppts}.

We consider the case of the type $E_8^1$.
Let $(h_i)_{i\geq 0} \in \prod \Z_{\geq 1}$ be the multi-height of $R_G:=W(k)[[x,y,z]]/(f_{G})$ as before.
Since $\sht (\var{R})=4$ (\cite[Section 4.7]{KTY25}), we have $2 \leq h_0 \leq 4$ by \cite[Proposition 4.6]{Yoshikawa25}. 
If $h_0=4$, then we have $h_i =1$ for any $i\geq 1$
by \cite{TY26}*{Theorem~A} and Proposition \ref{ht=ht^infty}.

Next, we assume that $h_0 =2$.
Then there exists $a_1 \in W(k)[[x,y,z]]$ such that
\begin{equation}
\label{eqn:E8a1}
\theta (F_* a_1) =  u(F_* ( a_1 f_{G} \Delta_1 (f_{G}))) \notin \m^{[p]}.
\end{equation}
Since the height of $W(k)[[x,y,z]]/(f)$ is $3$, we have
\[
u(F_* (a_1 f G^2 )) \notin \m^{[p]}.
\]
We assume that we can take $a_1 \in \m$.
By taking the monomial decomposition, we may assume that $a_1$ is monomial.
Then clearly, we have $a_1 \in (x,y^2,z)$.
Since $(x,y^2, z) \subset u(F_* I_1^{(x,y^2,z)})$, by \Cref{stable-1}, we have $h_i =1$ for $i \geq 1$ in this case.
Next, we assume that any $a_1$ as above satisfies $a_1 \notin \m$.
In this case, we have $G_{100} \equiv G_{001} \equiv G_{000} \equiv 0 \mod 2$ if we put $G = \sum G_{ijk} x^i y^j z^k$. 
We take a pair $(a_1, a_2)$ as in Remark \ref{remark:using-element} (then $a_1$ satisfies (\ref{eqn:E8a1})).
We have 
\[
a_1 = u( \theta^{h_1-1} (F^{h_1}_* (a_2 f_G^{p-1}) )).
\]
Since $u (F_* (f_{G})) \subset \m$, we have $h_1 \geq 2$.
Therefore, we have $h_1 =2$ by $h_0=2$ and Remark \ref{remark:ineq-h}.
If we may take $a_2 \in \m$, we have 
\[
\theta (F_* (a_2f_G^{p-1})) \in \m^{[p]}
\]
since
$G_{100} \equiv G_{001} \equiv G_{000} \equiv 0 \mod 2$.
Thus we have $a_{1} \in \m$, which contradicts the assumption.
Therefore, we have $a_2 \notin \m$.
Repeating this argument, we obtain $h_i = 2$ for $i\geq 2$.

Finally, we assume that $h_0 =3$.
We put $G = \sum G_{ijk} x^i y^j z^k$.
By direct computation,
\[ 
f_{G} \Delta_1 (f_{G}) \in \m^{[4]}
\]
implies that 
\[
G_{000} \equiv G_{100} \equiv G_{010} \equiv G_{001} \equiv G_{110} \equiv G_{011} \equiv G_{101} \equiv 0 \mod 2.
\]
Moreover, we have
\[
f_{G} \Delta_1 (f_{G})^{1+2} \equiv
G_{021}^2 x^7y^7z^7 + G_{020}^2 x^7y^7 z^5 + G_{030}^2 x^6y^6z^6 + G_{020}^2 x^6 y^4 z^6 \mod \m^{[8]}
\]
Therefore, if $G_{020} \not\equiv 0$ or $G_{030} \not\equiv 0 \mod 2$, then there exists $a_1 \in (x,y,z^2)$ such that 
\[
\theta^2 (F^2_* (a_1f_G)) \notin \m^{[p]}.
\]
By Proposition \ref{prop:repeating_multiheight} again, we have $h_i =1$ for $i\geq 1$ in this case.
On the other hand, if we have $G_{020} \equiv G_{030}  \equiv 0 \mod 2$,
then $G_{021} \not \equiv 0 \mod 2$ since $h_0 =3$.
We take a pair $(a_1, a_2)$ as in Remark \ref{remark:using-element}.
We have $a_1 \notin \m$.
By $h_0 =3$, we have $h_1 =3$ in this case.
Moreover, by the assumption, we have $a_2 \notin \m$.
Repeating this argument, we have $h_i = 3$ for $i\geq 2$.
It finishes the proof of this case.

Next, we consider the case of type $E_7^1$.
Let $(h_i)_{i\geq 0} \in \prod \Z_{\geq 1}$ be the multi-height of $R_G:=W(k)[[x,y,z]]/(f_{G})$ as before.
Since $\sht (\var{R})=3$ (\cite[Section 4.7]{KTY25}), we have $2 \leq h_0 \leq 3$ by \cite[Proposition 4.6]{Yoshikawa25}. 
First, we assume $h_0=2$, then there exists $a_1 \in W(k)[[x,y,z]]$ such that
\begin{equation}
\label{eqn:E7a1}
\theta (F_* a_1) =  u(F_* ( a_1 f_{G} \Delta_1 (f_{G}))) \notin \m^{[p]}.
\end{equation}
A direct computation shows that in this case we may take
\[
a_1 \in (x,y^2,z).
\]
Since
\[
(x,y^2,z) \subset u(F_* I_1^{(x,y^2,z)}),
\]
it follows from \Cref{stable-1} that $h_i=1$ for all $i\geq 1$.

Finally, assume that $h_0=3$.
Put
\[
G=\sum G_{ijk}x^iy^jz^k.
\]
As we have checked above,
\[
f_G\Delta_1(f_G)\in \m^{[4]}
\]
implies
\[
G_{000}\equiv G_{100}\equiv G_{010}\equiv G_{001}\equiv G_{110}\equiv G_{011}\equiv G_{101}\equiv0 \mod 2.
\]
Under this assumption, a direct computation shows that
\[
\begin{aligned}
f_G\Delta_1(f_G)^{1+2}\equiv{}\;&
x^7y^7z^7
+ G_{021}^2 x^7y^7z^6 \\
&+ G_{020}^2
\left(
x^7y^7z^4
+ x^7y^6z^6
+ x^6y^7z^7
+ x^6y^4z^6
\right) \\
&+ (G_{030}^2+G_{200}^2)x^6y^6z^6
\mod \m^{[8]}.
\end{aligned}
\]
Therefore, if
\[
G_{020}\not\equiv 0,\quad\text{or}\quad G_{021}\not\equiv 0,\quad\text{or}\quad
G_{030}\not\equiv G_{200} \mod 2,
\]
then there exists $a_1\in (x,z)$ such that
\[
u^2(F_*^2(a_1f_G\Delta_1(f_G)^{1+2})) \notin \m^{[2]}.
\]
Hence, by \Cref{stable-1}, we obtain $h_i=1$ for all $i\geq 1$.

On the other hand, if
\[
G_{020}\equiv G_{021}\equiv0
\quad\text{and}\quad
G_{030}\equiv G_{200}\mod 2,
\]
then
\[
f_G\Delta_1(f_G)^{1+2}\equiv x^7y^7z^7 \mod \m^{[8]}.
\]
Let $(a_1,a_2)$ be as in Remark \ref{remark:using-element}.
Then we have $a_1\notin \m$.
Since $h_0=3$, it follows that $h_1=3$ in this case.
Moreover, by the same argument, we also have $a_2\notin \m$.
Repeating this argument, we obtain $h_i=3$ for all $i\geq 2$.

The other cases can be proved in the same way.
\end{proof}

\begin{table}[ht]
\begin{threeparttable}
\caption{List of \texorpdfstring{$\ppt(-,p)$}{ppt(p)} of $W(k)$-lifts of non-taut RDPs (except for type $D_{2n}^r$ and $D_{2n+1}^{r}$)}
\label{table:ppts}
\centering
\begin{tabular}{|l|c|c|c|}
\hline
$p$ & type &  $\var{f}$ & \text{list of } $\ppt(-,p)$ \tnote{*}
\\
\hline
2 & $D_{2n}^{0}$ &$z^2 +x^{2}y+xy^n$ &  $1/2^{ \lceil\log_2 n \rceil}$ $(0), 1/2^i$ $(y^{2^{i-1}})$ \text{ for }$1 \leq i < \lceil\log_2 n \rceil$
\\
\hline
2 & $D_{2n+1}^{0}$ &$z^2+x^2y+y^nz$ & 
$1/2^{ \lceil\log_2 n \rceil}$ $(0), 1/2^i$ $(y^{2^{i-1}})$ \text{ for }$1 \leq i < \lceil\log_2 n \rceil$
\\
\hline
2 & $E_{6}^{0}$ &$z^2+x^3+y^2z $& 1/2 (0) \\
\hline
2 & $E_{6}^{1}$ &$z^2+x^3+y^2z+xyz$ & 1 (0) \\
\hline
2 & $E_{7}^{0}$ &$z^2+x^3+xy^3$ & 1/8 (0), 1/4 ($y^2$), 1/2 (1)\\
\hline
2 & $E_{7}^{1}$ &$z^2+x^3+xy^3+x^2yz$& 1/7 (0), 1/4 ($y^2$) ,1/2 (1)\\
\hline
2 & $E_{7}^{2}$ &$z^2+x^3+xy^3+y^3z$ &1/3 (0), 1/2 (1)\\
\hline
2 & $E_{7}^{3}$ &$z^2+x^3+xy^3+xyz$& 1 (0) \\
\hline
2 & $E_{8}^{0}$ &$z^2+x^3+y^5$ & 1/8 (0), 1/4 ($y^2$), 1/2 (1) \\
\hline
2 & $E_{8}^{1}$ &$z^2+x^3+y^5+xy^3z$ & 1/8 (0), 1/7 ($y^2z$), 1/4 ($y^2$), 1/3 ($xz$), 1/2 (1)\\
\hline
2 & $E_{8}^{2}$ &$z^2+x^3+y^5+xy^2z$ & 1/7 (0), 1/4 ($y^2$), 1/2 (1)\\
\hline
2 & $E_{8}^{3}$ &$z^2+x^3+y^5+y^3z$ & 1/3 (0), 1/2 (1)\\
\hline
2 & $E_{8}^{4}$ &$z^2+x^3+y^5+xyz$& 1 (0)\\
\hline
3 & $E_{6}^{0}$ & $z^2+x^3+y^4$ & 2/3 (0)\\ 
\hline
3 & $E_{6}^{1}$ &$z^2+x^3+y^4+x^2y^2$ &1 (0)\\ 
\hline
3 & $E_{7}^{0}$ & $z^2+x^3+xy^3$ &2/3 (0)\\ 
\hline
3 & $E_{7}^{1}$ &$z^2+x^3+xy^3+x^2y^2$ & 1 (0)\\ 
\hline
3 & $E_{8}^{0}$ & $z^2+x^3+y^5$ & 5/9 (0), 2/3 (1)\\ 
\hline
3 & $E_{8}^{1}$ & $z^2+x^3+y^5+x^2y^3$ &5/8 (0), 2/3 (1)\\ 
\hline
3 & $E_{8}^{2}$ & $z^2+x^3+y^5+x^2y^2$ & 1 (0)\\ 
\hline
5 & $E_{8}^{0}$ &$z^2+x^3+y^5$  & 4/5 (0)\\
\hline
5 & $E_{8}^{1}$ &$z^2+x^3+y^5+xy^4$ & 1 (0)\\ 
\hline
\end{tabular}
\begin{tablenotes}
\item[*]
In the rightest column, the parentheses to the right of each value, we indicate a choice of \( G \) such that
\(\ppt( W(k)[[x,y,z]]/( f + pG),p)\) realizes the value.
Here, \(f\) denotes the natural lift of \(\var{f}\) obtained by viewing all coefficients of \(\var{f}\) as elements of \(\mathbb{Z} \subset W(k)\).
\end{tablenotes}
\end{threeparttable}
\end{table}

\section{On the types \texorpdfstring{$D_{2n}^r$}{D(2n)r} and \texorpdfstring{$D_{2n+1}^r$}{D(2n+1)r}}

\begin{lemma}\label{f_h-lemma}
Let $p=2$ and $A=W(k)[[x,y,z]]$, and set
\[
f = z^2 + x^2y + xy^n + xy^{n-r}z + 2G
\quad \text{or} \quad
f = z^2 + x^2y + y^n z + xy^{n-r}z + 2G,
\]
where $r,n$ are positive integers satisfying $1 \leq r<n$, and $G\in A$.
Let $\alpha\ge0$ be an integer.
Set $f_1:=f$ and
\[
f_i:=f\,\Delta_1(f)^{1+2+\cdots+2^{i-2}}\qquad(i\ge2).
\]
Define
\[
h:=\inf\{\,h'\ge1 \mid f_{h'}\notin(x^{2^{h'}},y^{2^{h'}(1+\alpha)},z^{2^{h'}},2)\,\}.
\]
Then the following hold:
\begin{enumerate}
  \item If the coefficient of $xz$ in $G$ is divisible by $2$, then
  \[
  h \leq e:=\inf\{e'\ge1 \mid 2^{e'}(1+\alpha)-2^{e'-1}-(n-r)\ge0\}.
  \]
  Furthermore, if 
  \[
f_h\in(xz)^{2^{h}-1}+(x^{2^{h}},y^{2^{h}(1+\alpha)},z^{2^{h}},2),
\]
then we have $h=e$
  and
  \[
  f_h\equiv ux^{2^h-1}y^{2^{h-1}-1+(n-r)}z^{2^h-1}
  \pmod{(x^{2^{h}},y^{2^{h}(1+\alpha)},z^{2^{h}},2)}
  \]
  for some unit $u \in A^\times$.
  \item If the coefficient of $xz$ in $G$ is not divisible by $2$, then
  \[
  h\leq e_{xz}:=\inf\{e'\ge1 \mid 2^{e'}(1+\alpha)-1-(n-r)\ge0\}.
  \]
  Furthermore, if 
  \[
f_h\in(xz)^{2^{h}-1}+(x^{2^{h}},y^{2^{h}(1+\alpha)},z^{2^{h}},2),
\]
then we have $h=e_{xz}$ and
  \[
  f_h\equiv ux^{2^h-1}y^{n-r}z^{2^h-1}
  \pmod{(x^{2^{h}},y^{2^{h}(1+\alpha)},z^{2^{h}},2)}
  \]
  for some unit $u \in A^\times$.
\end{enumerate}
\end{lemma}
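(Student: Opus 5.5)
We compute everything modulo $2$ in $\var{A}=k[[x,y,z]]$, with the Frobenius lift $\phi$ fixed by $x\mapsto x^2$, $y\mapsto y^2$, $z\mapsto z^2$ and trivial on $W(k)$ coefficients, and we work modulo the ideal $\mathfrak{b}_{h'}:=(x^{2^{h'}},y^{2^{h'}(1+\alpha)},z^{2^{h'}},2)$. The first step is to write $\Delta_1(f)\bmod 2$ explicitly. For $f=\sum_i m_i$, the formula $\Delta_1(a+b)=\Delta_1(a)+\Delta_1(b)-\sum_{0<j<p}\frac{1}{p}\binom{p}{j}a^jb^{p-j}$ gives, for $p=2$, $\Delta_1(f)\equiv \sum_{i<j} m_im_j+\Delta_1(2G)$. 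Monomials with coefficient $1$ have $\Delta_1=0$. We also have $\Delta_1(2G)\equiv G^2 \cdot(2^2-2)/2\equiv G^2 \pmod 2$, up to cross terms with $2G$ that vanish mod $2$. Hence
\[
\Delta_1(f)\equiv \sigma_2(z^2,x^2y,xy^n\text{ or }y^nz,\,xy^{n-r}z)+G^2+(\text{terms }m_i\cdot 2G \text{, zero mod }2).
\]
The crucial term is $x^2y\cdot xy^{n-r}z\cdot(\cdots)$, or better $z^2\cdot xy^{n-r}z$. More precisely, among the pairwise products and $G^2$, the only terms that can contribute a monomial of the shape $(xz)^{\text{odd}}y^{\ast}$ after multiplying by $f$ are $xy^{n-r}z\cdot z^2$, $x^2y\cdot xy^{n-r}z$, and $(G_{xz}xz)^2$. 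We then track monomials of the form $x^{a}y^{b}z^{c}$ with $a,c\equiv -1 \pmod{2^{h'}}$.

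The second step is an induction on $i$ describing $f_i=f_{i-1}^{\,}\Delta_1(f)^{2^{i-2}}$ modulo $\mathfrak{b}_i$, restricted to the part lying in $(xz)^{2^i-1}$. We use $f_i=f_{i-1}\cdot\Delta_1(f)^{2^{i-2}}$ and the Frobenius additivity of $(\cdot)^{2^{i-2}}$ mod $2$. In case (1), $G_{xz}$ is even, so the lowest $y$-degree term of the form $(xz)^{\mathrm{odd}}$ comes from $xy^{n-r}z$ in $f$ together with $(x^2y\cdot z^2)$-type factors in the $\Delta_1$ powers. The claim is that modulo $\mathfrak{b}_i$ (and modulo terms with smaller $x$- or $z$-exponent), $f_i\equiv x^{2^i-1}y^{2^{i-1}-1+(n-r)}z^{2^i-1}\cdot(\text{unit})$. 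The exponent bookkeeping is $1+\sum_{j=0}^{i-2}2^j=2^{i-1}$ factors, and each $\Delta_1$ factor contributes $x^2yz^2$-type monomials to raise both the $x$- and $z$-exponents. This survives modulo $y^{2^i(1+\alpha)}$ exactly when $2^{i-1}-1+(n-r)<2^i(1+\alpha)$, that is, $i\ge e$. Hence $f_e\notin\mathfrak{b}_e$, which gives $h\le e$. In case (2), the term $G_{xz}^2x^2z^2$ in $\Delta_1(f)$ replaces the $x^2yz^2$ factors and costs no $y$. The surviving monomial is then $x^{2^i-1}y^{n-r}z^{2^i-1}$, which gives the threshold $e_{xz}$.

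The third step is the "furthermore" part. Assume $f_h\in(xz)^{2^h-1}+\mathfrak{b}_h$. Then every monomial of $f_h$ that is nonzero mod $\mathfrak{b}_h$ has $x$- and $z$-exponent exactly $2^h-1$, so only the tracked part above survives. By the computation of the second step, this part is zero mod $\mathfrak{b}_{h'}$ for $h'<e$ (resp. $h'<e_{xz}$). Therefore $h=e$, and $f_h$ equals the displayed monomial times a unit; the unit comes from the factor $1+(\text{higher terms})$.

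The main obstacle is the second step: we must verify that no other monomial in the expansion of $f\Delta_1(f)^{2^{i-1}-1}$ with $x,z$-exponents $2^i-1$ has smaller $y$-degree or cancels the leading one. This includes contributions from the other coefficients of $G^2$ and from the $xy^n$ / $y^nz$ term. I would handle it with a weighted degree argument: assign weights making $z^2$, $x^2y$ and the $n$-term homogeneous and $xy^{n-r}z$ of lower weight. Then $y$-degree is minimized only by the claimed choice, and the characteristic-$2$ multinomial coefficient of the leading term is $1$ because it arises from a Frobenius-power product.
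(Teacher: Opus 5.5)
There is a genuine gap, in two places.

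\textbf{The treatment of $G$.} The statement allows an arbitrary $G$; only the parity of its $xz$-coefficient is prescribed. Modulo $2$, $\Delta_1(f)\equiv\Delta_1(f-2G)+G^2$ contains $g^2$ for \emph{every} monomial $g$ of $G$ with odd coefficient, not only $(G_{xz}xz)^2$. Your list also omits $z^2\cdot x^2y$, which contributes to $x^3y^{n-r+1}z^3$ in $f\Delta_1(f)$.

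These extra squares break the normal form you claim for the part of $f_i$ with $x$- and $z$-exponents $2^i-1$. Take case (1) with $G=x+xz^3$, so $G^2\equiv x^2+x^2z^6$. Then $x^7y^{n-r}z^7=(xy^{n-r}z\cdot x^2z^6)\cdot(x^2)^2$ is the only decomposition of this monomial in $f_3$, so it occurs with coefficient $1$. For $n-r=5$ and $\alpha=0$ one has $e=4$. So your assertion that this part of $f_{h'}$ vanishes modulo $\mathfrak{b}_{h'}$ for $h'<e$ already fails at $h'=3$.

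The lemma survives in such cases only because $(\dagger)$ fails for a different reason. This is the paper's preliminary Claim. If $G$ has an odd coefficient at $1$, $x$ or $z$, then $f_h$ contains $x^{2^h-1}y^{n-r}z$ (or $xy^{n-r}z$ when the constant term is odd) with odd coefficient. This monomial gives $h\le e_{xz}\le e$ directly. Since it lies outside $(xz)^{2^h-1}$, it also shows that the hypothesis of the ``furthermore'' part cannot hold. Only after this reduction may one assume that the coefficients of $1,x,z$ (and, in (1), $xz$) in $G$ are even. Your argument never uses $(\dagger)$ to exclude such $G$, and it must.

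\textbf{The uniqueness and minimality step.} You correctly flag this as the main obstacle, but you do not supply it, and the proposed weight argument would not work. With weights making $z^2$, $x^2y$ and $xy^n$ (or $y^nz$) homogeneous, $xy^{n-r}z$ has strictly \emph{larger} weight (by $2n-2r-1$ when $w_y=2$), not smaller. No weighting can control the arbitrary squares $g^2$ anyway. Moreover, an induction on $f_i$ modulo $\mathfrak{b}_i$ cannot be run, since $\mathfrak{b}_{i-1}\not\subseteq\mathfrak{b}_i$; you have to track actual monomials of $f_i$ in $k[[x,y,z]]$.

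The paper's mechanism is as follows.
\begin{enumerate}
\item Write a contributing monomial as $M_1\cdots M_{h-1}$, with $M_1$ from $f\Delta_1(f)$ and $M_i=N_i^{2^{i-1}}$ for a monomial $N_i$ of $\Delta_1(f)$.
\item Odd $x$- and $z$-degrees force $xy^{n-r}z\mid M_1$. This leaves a $y$-budget of at most $2^{h-1}-1$ for the remaining factors.
\item Hence $N_{h-1}$ has $y$-degree at most $1$ and $x$-, $z$-degrees at most $3$.
\item Once the coefficients of $1,x,z,xz$ in $G$ are even, this forces $N_{h-1}=x^2yz^2$. Terms of $\Delta_1(f)$ involving $xy^{n-r}z$ or $y^n$ are excluded by the $y$-bound, using $n-r\ge 2^{e-2}+1$ from the minimality of $e$.
\item Descending in the same way gives $M_1=x^3y^{n-r+1}z^3$. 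Its coefficient in $f\Delta_1(f)$ is $3$, which is odd by an explicit count of decompositions, not by any Frobenius-power effect.
\end{enumerate}
In case (2) the same argument gives $N_i=x^2z^2$ and $M_1=x^3y^{n-r}z^3$.
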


\begin{proof}
We treat the case
\[
f=z^2 + x^2y + xy^n + xy^{n-r}z + 2G.
\]
The other case can be handled by the same argument.
We note that
\[
\Delta_1(f)\equiv \Delta_1(f-2G)+G^2 \pmod{2}
\]
and that $e_{xz} \leq e$.
We first prove the following claim.

\begin{claim}\label{claim:x-z}
Assume that at least one of the coefficients of $x$, $z$, or $1$ in $G$ is not divisible by $2$.
Then we have $h \leq e_{xz}$ and
\[
f_h\notin(xz)^{2^{h}-1}+(x^{2^{h}},y^{2^{h}(1+\alpha)},z^{2^{h}},2).
\]
\end{claim}

\begin{claimproof}
Denote by $G_x$, $G_z$, and $G_1$ the coefficients of $x$, $z$, and $1$ in $G$, respectively.
Assume that $G_x$ is not divisible by $2$.
The cases where $G_z$ or $G_1$ is not divisible by $2$ can be treated in the same way.

Since the coefficient of $x^2$ in $\Delta_1(f)$ is $G_x^2$, the coefficient of
$x^{2^{e_{xz}}-1}y^{n-r}z$ in $f_{e_{xz}}$ is
\[
G_x^{2+4+\cdots+2^{e_{xz}-1}}.
\]
By the definition of $e_{xz}$, we have
\[
n-r \leq 2^{e_{xz}}(1+\alpha)-1.
\]
Hence
\[
f_{e_{xz}} \notin (x^{2^{e_{xz}}},y^{2^{e_{xz}}(1+\alpha)},z^{2^{e_{xz}}},2),
\]
and therefore $h \leq e_{xz}$.

We next show that
\[
f_h\notin(xz)^{2^{h}-1}+(x^{2^{h}},y^{2^{h}(1+\alpha)},z^{2^{h}},2).
\]
Assume to the contrary that
\[
f_h\in(xz)^{2^{h}-1}+(x^{2^{h}},y^{2^{h}(1+\alpha)},z^{2^{h}},2).
\]
By the definition of $h$, we have
\[
f_h\equiv ux^{2^h-1}y^mz^{2^h-1}
\pmod{(x^{2^{h}},y^{2^{h}(1+\alpha)},z^{2^{h}},2)}
\]
for some $u \in A^\times$ and some integer $m \leq 2^h(1+\alpha)-1$.

On the other hand, since
\[
f_h=f\Delta_1(f)\Delta_1(f)^2\cdots \Delta_1(f)^{2^{h-2}},
\]
and the $x$- and $z$-orders of $x^{2^h-1}y^mz^{2^h-1}$ are odd, the monomial
$x^{2^h-1}y^mz^{2^h-1}$ must be divisible by $xy^{n-r}z$.
Therefore,
\[
m \geq n-r.
\]
In particular, we have
\[
n-r \leq m \leq 2^h(1+\alpha)-1.
\]

Moreover, by the argument above, the coefficient of
$x^{2^{h}-1}y^{n-r}z$ in $f_h$ is not divisible by $2$.
This contradicts the assumption that
\[
f_h\in(xz)^{2^{h}-1}+(x^{2^{h}},y^{2^{h}(1+\alpha)},z^{2^{h}},2).
\]
Thus,
\[
f_h\notin(xz)^{2^{h}-1}+(x^{2^{h}},y^{2^{h}(1+\alpha)},z^{2^{h}},2),
\]
as desired.
\end{claimproof}

\noindent
By \cref{claim:x-z}, we may assume that the coefficients of $x$, $z$, and $1$ in $G$ are all divisible by $2$.

Next, assume that the coefficient of $xz$ in $G$ is divisible by $2$, and we prove $(1)$.
If $h=1$, then the assertion is clear (note that $n \geq 2(1+\alpha)$ by the assumption), so we may assume $h\ge2$.
If $e=1$, then necessarily $h=1$, so we may also assume $e \geq 2$.
In particular, we have $n-r \geq 2$.
By the minimality of $e$, we have
\begin{equation}\label{eq:e1}
    n-r\ge2^{e-1}(1+\alpha)-2^{e-2} +1 \geq 2^{e-2}+1.
\end{equation}

Choose monomials $M_1,M_2,\ldots,M_{e-1}$ with coefficient $1$ such that the coefficient of $M_1$ in $f\Delta_1(f)$ is not divisible by $2$ and the coefficient of $M_i$ in $\Delta_1(f)^{2^{i-1}}$ is not divisible by $2$ for each $i\ge2$.

First, assume that
\[
M_1\cdots M_{e-1} =  x^{2^e-1}y^{2^{e-1}-1+(n-r)}z^{2^e-1}.
\]
Since the $x$- and $z$-orders of $M_1\cdots M_{e-1}$ are odd, $M_1$ must be divisible by $xy^{n-r}z$.
By comparing the $y$-orders, we obtain
\[
M_1=x^3y^{n-r+1}z^3
\]
and
\[
M_2 \cdots M_{e-1}=x^{2^e-4}y^{2^{e-1}-2}z^{2^e-4}.
\]
Again by comparing the $y$-orders, and using \eqref{eq:e1}, we see that
\[
M_2=(x^2yz^2)^2.
\]
Repeating the same argument, we obtain
\[
M_i=(x^2yz^2)^{2^{i-1}}
\]
for every $i \geq 2$.
We note that the coefficient of $(x^2yz^2)^{2^{i-1}}$ in $\Delta_1(f)^{2^{i-1}}$ is not divisible by $2$.
Hence the monomial
\[
x^{2^e-1}y^{2^{e-1}-1+(n-r)}z^{2^e-1}
\]
appears in $f_e$ with coefficient not divisible by $2$, and therefore $h\le e$ since
\[
2^{e-1}-1+(n-r) \leq 2^e(1+\alpha)-1
\]
by the definition of $e$.

Next, assume that
\[
f_h\in(xz)^{2^{h}-1}+(x^{2^{h}},y^{2^{h}(1+\alpha)},z^{2^{h}},2)
\]
and that
\[
M_1\cdots M_{h-1}=x^{2^h-1}y^{m}z^{2^h-1}
\]
for some $m\le 2^{h-1}-1+(n-r)$.
Since the $x$- and $z$-orders of $M_1\cdots M_{h-1}$ are odd, $M_1$ must be divisible by $xy^{n-r}z$.
Thus the $y$-order of $M_{h-1}$ is at most
\[
m-(n-r) \leq 2^{h-1}-1.
\]

First, assume that $h \geq 3$.
We note that
\[
\Delta_1(f)^{2^{h-2}}\equiv \Delta_1(f-2G)^{2^{h-2}}+G^{2^{h-1}} \pmod{2}.
\]
Since the coefficients of $x$, $z$, $1$, and $xz$ in $G$ are divisible by $2$, the monomial $M_{h-1}$ must appear in $\Delta_1(f-2G)^{2^{h-2}}$.
Furthermore, by \eqref{eq:e1}, we have
\[
2^{h-2}(n-r) \geq 2^{h-2}(2^{e-2}+1) \geq 2^{h-1}.
\]
Therefore, the $y$-order of $M_{h-1}$ is strictly smaller than $2^{h-2}(n-r)$, and hence
\[
M_{h-1}=(x^2yz^2)^{2^{h-2}}
\]
and
\[
M_1\cdots M_{h-2}=x^{2^{h-1}-1}y^{m-2^{h-2}}z^{2^{h-1}-1}.
\]
We note that
\[
m-2^{h-2} \leq 2^{h-2}-1+(n-r).
\]
Repeating the same argument, we obtain
\[
M_1=x^3y^{m-(2^{h-2}+2^{h-3}+\cdots+2)}z^3.
\]
Therefore,
\[
M_1=x^3y^{n-r + 1}z^3
\]
since the coefficients of $x$, $z$, $1$, and $xz$ in $G$ are divisible by $2$. Thus
\[
m=n-r+2^{h-2}+2^{h-3}+\cdots+2+ 1 =2^{h-1}-1+(n-r),
\]
and
\[
f_h\equiv ux^{2^h-1}y^{2^{h-1}-1+(n-r)}z^{2^h-1}
\pmod{(x^{2^{h}},y^{2^{h}(1+\alpha)},z^{2^{h}},2)}
\]
for some unit $u \in A^\times$.
By the definition of $h$, we have
\[
2^{h-1}-1+(n-r) \leq 2^h(1+\alpha)-1,
\]
and hence $e=h$.

Next, we prove $(2)$.
By the same argument as in the proof of $(1)$, we may assume $e_{xz}\ge2$.
Choose monomials $M_1,M_2,\ldots,M_{e_{xz}-1}$ as above, and assume that
\[
M_1\cdots M_{e_{xz}-1}=x^{2^{e_{xz}}-1}y^{n-r}z^{2^{e_{xz}}-1}.
\]
By the same reasoning as in the proof of $(1)$, $M_1$ must be divisible by $xy^{n-r}z$.
Since the coefficients of $x$, $z$, and $1$ in $G$ are divisible by $2$, we have
\[
M_1=x^3y^{n-r}z^3
\quad \text{and} \quad
M_i=(xz)^{2^i}
\]
for all $i\ge2$.
Hence the coefficient of $x^{2^{e_{xz}}-1}y^{n-r}z^{2^{e_{xz}}-1}$ in $f_{e_{xz}}$ is not divisible by $2$ and $h\le e_{xz}$.

Next, assume that
\[
f_h\in(xz)^{2^{h}-1}+(x^{2^{h}},y^{2^{h}(1+\alpha)},z^{2^{h}},2)
\]
and
\[
M_1\cdots M_{h-1}=x^{2^h-1}y^{m}z^{2^h-1}
\]
for some $m\le n-r$.
By the same argument as above, we have $m=n-r$, $M_1=x^3y^{n-r}z^3$, and $M_i=(xz)^{2^i}$ for $i\ge2$.
Thus $e=h$ and
\[
  f_h\equiv u\,x^{2^h-1}y^{n-r}z^{2^h-1}
  \pmod{(x^{2^{h}},y^{2^{h}(1+\alpha)},z^{2^{h}},2)}
\]
for some $u \in A^\times$, as desired.
\end{proof}

\subsection{Lifts of \texorpdfstring{$D_{2n}^r$}{D(2n)r} and \texorpdfstring{$D_{2n+1}^r$}{D(2n+1)r} without the monomial \texorpdfstring{$xz$}{xz}}

\begin{notation}\label{notation:D2nr-no-xz}
Let $p=2$, $A=W(k)[[x,y,z]]$, and $\m:=(x,y,z,2)$, and set
\[
f = z^2 + x^2y + xy^n + xy^{n-r}z + 2G
\quad \text{or} \quad
f = z^2 + x^2y + y^n z + xy^{n-r}z + 2G,
\]
where $r,n$ are integers satisfying $r<n$ and $G\in A$.
Assume moreover that the coefficient of $xz$ in $G$ is divisible by $2$.

We inductively define two sequences of integers $(\alpha_i)_{i\ge0}$ and $(e_i)_{i\ge0}$ as follows:
\begin{itemize}
  \item $\alpha_0=0$;
  \item for each $i\ge0$, having defined $\alpha_i$, let $e_i$ be the smallest  integer satisfying
  \[
    2^{e_i}(1+\alpha_i)-2^{e_i-1}-(n-r)\ge0,
  \]
  and then define
  \begin{equation}
  \label{eqn:alpha}
    \alpha_{i+1}=2^{e_i}(1+\alpha_i)-2^{e_i-1}-(n-r).
  \end{equation}
\end{itemize}
We note that 
\begin{equation}
\label{eqn:no-xz-e0}
e_0=\rup{\log_2(n-r)}+1.
\end{equation}
Let $(h_0,h_1,\ldots)$ denote the multi-height of $A/(f)$.
Set $f_1:=f$ and
\[
f_i:=f\Delta_1(f)^{1+2+\cdots+2^{i-2}}\qquad(i\ge2),
\]
as in Lemma \ref{f_h-lemma}.
For $i\ge0$, put
\[
E_i:=e_0+\cdots+e_{i-1}+e_i,\qquad
F_i:=f_{e_0}^{2^{e_1+\cdots+e_{i-1}+e_i}}\cdots f_{e_{i-1}}^{2^{e_i}}f_{e_i}.
\]
Moreover, we set $E_{-1} :=0$ and $F_{-1}:=1.$

For each $m\ge0$, we define the conditions $(\ast)_m$ and $(\dagger)_{m}$ by
\begin{align*}
    (\ast)_m:\quad& 
F_m \equiv u_m x^{2^{E_m}-1}y^{2^{E_m}-1-\alpha_{m+1}}z^{2^{E_m}-1}
\pmod{\m^{[2^{E_m}]}}
\quad\text{for some }u_m\in A^\times \\
(\dagger)_{m}:\quad& f_{h_m} \in (xz)^{2^{h_m}-1}+(x^{2^{h_m}},y^{2^{h_m}(1+\alpha_m)},z^{2^{h_m}},2).
\end{align*}
\end{notation}

\begin{remark}\label{rem:alpha-bound}
We have $0 \leq \alpha_i < n-r$ and $e_i \geq 1$ for all $i \geq 0$.
Indeed, we have $\alpha_0 = 0$ and $e_0 \geq 1$.
Assume that $\alpha_i \geq 0$.
Then, by the minimality of $e_i$, we obtain
\begin{equation}
\label{eqn:minimale}
n-r > 2^{e_i-1}(1+\alpha_i) - 2^{e_i-2}.
\end{equation}
It follows that
\[
0 \leq \alpha_{i+1}
= 2^{e_i}(1+\alpha_i) - 2^{e_i-1} - (n-r)
< n-r.
\]
Furthermore, since
\[
(1+\alpha_{i+1}) - \frac{1}{2} - (n-r) \leq -\frac{1}{2} < 0,
\]
we obtain $e_{i+1} \geq 1$.
Repeating this process, we conclude that $0 \leq \alpha_i < n-r$ and $e_i \geq 1$ for all $i \geq 0$.
\end{remark}

\begin{lemma}\label{F-E-lemma-no-xz}
We use \cref{notation:D2nr-no-xz}.
We fix an integer $m \geq -1$.
Moreover, if $m\geq 0$, we assume that $(\ast)_m$ holds and $h_i=e_i$ for all $0 \leq i \leq m$.
Then we have
\[
h_{m+1}=\inf\{\,h\mid f_h\notin(x^{2^h},y^{2^h(1+\alpha_{m+1})},z^{2^h},2)\,\}
\]
and $h_{m+1} \leq e_{m+1}$.
Furthermore, if $(\dagger)_{m+1}$ does not hold, then $h_{m+2}=h_{m+3}=\cdots=1$.
Otherwise, the condition $(\ast)_{m+1}$ holds and $h_{m+1}=e_{m+1}$.
\end{lemma}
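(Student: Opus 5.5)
The plan is to combine the ideal-quotient description of \cref{non-ideal-ver} with \cref{f_h-lemma} and \cref{stable-1}. By \cref{non-ideal-ver}, $h_{m+1}=\inf\{h\mid f_h\notin J_{m+1}^{[2^h]}\}$, where $J_0=\m$ and $J_{i+1}=(J_i^{[2^{h_i}]}\colon f_{h_i})$. Since $h_i=e_i$ for $i\le m$, the proof of \cref{non-ideal-ver} gives $J_{m+1}=(\m^{[2^{E_m}]}\colon F_m)$ (for $m=-1$ simply $J_0=\m$, with $\alpha_0=0$). The first step is to compute this colon ideal using $(\ast)_m$. Modulo $2$ we are in the regular ring $k[[x,y,z]]$, and $F_m$ is congruent to a unit times the monomial $x^{2^{E_m}-1}y^{2^{E_m}-1-\alpha_{m+1}}z^{2^{E_m}-1}$. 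Hence
\[
(\m^{[2^{E_m}]}\colon F_m)=(x,\,y^{1+\alpha_{m+1}},\,z,\,2).
\]
Consequently $J_{m+1}^{[2^h]}=(x^{2^h},y^{2^h(1+\alpha_{m+1})},z^{2^h},2)$, which is the displayed formula for $h_{m+1}$.

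The inequality $h_{m+1}\le e_{m+1}$ then follows from \cref{f_h-lemma}(1) with $\alpha=\alpha_{m+1}$, because the $xz$-coefficient of $G$ is even. The same lemma shows that if $(\dagger)_{m+1}$ holds, then $h_{m+1}=e_{m+1}$ and
\[
f_{h_{m+1}}\equiv u\,x^{2^{e}-1}y^{2^{e-1}-1+(n-r)}z^{2^{e}-1}
\pmod{(x^{2^{e}},y^{2^{e}(1+\alpha_{m+1})},z^{2^{e}},2)}
\]
with $e=e_{m+1}$. To obtain $(\ast)_{m+1}$, I would write $F_{m+1}=F_m^{2^{e}}f_{e}$. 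From $(\ast)_m$, $F_m^{2^e}$ is a unit times the monomial $(x^{2^{E_m}-1}y^{2^{E_m}-1-\alpha_{m+1}}z^{2^{E_m}-1})^{2^e}$ modulo $\m^{[2^{E_{m+1}}]}$. Multiplying this by the reduction of $f_e$, every term of the error ideal lands in $\m^{[2^{E_{m+1}}]}$. For the $y$-exponent, the identity $2^e(2^{E_m}-1-\alpha_{m+1})+2^{e-1}-1+(n-r)=2^{E_{m+1}}-1-\alpha_{m+2}$ holds by \eqref{eqn:alpha}.

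If $(\dagger)_{m+1}$ fails, I would apply \cref{stable-1}(2) with $r=m+1$ and $J=(x,z)$. First I would check the hypothesis $J\subseteq u(F_*I^J_1)$, where $I^J_1=f(x,z)+I^{[2]}$. Since $f$ contains $z^2$ and $x^2y$, we have $zf\ni z^3$ and $xf$ contains $x^3y$, so $u(F_*(z\cdot z^2\cdot xz\cdots))$ produces $z$, and similarly $x$. This is a small computation with the standard generator $u$ and is plausible but needs care. Next, the failure of $(\dagger)_{m+1}$ together with $f_{h_{m+1}}\notin J_{m+1}^{[2^{h}]}$ gives a monomial of $f_{h_{m+1}}$ outside the ideal with $x$- or $z$-exponent less than $2^{h}-1$. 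Multiplying by $x$ or $z$ gives $a\in J$ with $af_{h_{m+1}}\notin J_{m+1}^{[2^h]}$. This translates, via the colon computation above, into $aF_m^{2^{h}}f_{h_{m+1}}\notin\m^{[2^{E_m+h}]}$, which is exactly the hypothesis of \cref{stable-1}(2). Hence $h_i=1$ for $i\ge m+2$.

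The main obstacle I expect is this last step. One must justify carefully that an element $a\in(x,z)$ exists which fixes the parity defect. The subtle points are that the offending term may be a sum rather than a single monomial, and the verification of $J\subseteq u(F_*I_1^J)$. If $(x,z)$ fails the hypothesis, I would first try the larger ideal $(x,y^2,z)$ used for $E_8^1$.
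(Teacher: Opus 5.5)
Your proposal is correct and follows essentially the paper's proof. The paper substitutes $(\ast)_m$ directly into $F_m^{2^h}f_h$ instead of computing the colon ideal $J_{m+1}=(x,y^{1+\alpha_{m+1}},z,2)$, but this is the same monomial computation, and it likewise uses \cref{f_h-lemma}(1) and \cref{stable-1} with $J=(x,z)$. The two points you flag are routine: modulo $2$ one has $u(F_*(z\cdot xf))=x$ and $u(F_*(xy\cdot zf))=z$ for either form of $f$ (so the correct multipliers are $z$ and $xy$, not the ones in your sketch), and since all ideals involved are monomial ideals in $k[[x,y,z]]$, the ``sum rather than a single monomial'' issue does not arise.
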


\begin{proof}
By Proposition~\ref{non-ideal-ver}, we have
\[
h_{m+1}=\inf\{h\ge1 \mid F_m^{2^h}f_h\notin\m^{[2^{E_m+h}]}\}.
\]
For $h\ge1$, it follows that
\begin{equation}
\label{eqn:Fm+1}
F_m^{2^h}f_h \equiv
u_m^{2^{h}} x^{2^{E_m+h}-2^h}y^{2^{E_m+h}-2^h(1+\alpha_{m+1})}z^{2^{E_m+h}-2^h}f_h
\pmod{\m^{[2^{E_m+h}]}}.
\end{equation}
Thus,
\[
h_{m+1}=\inf\{\,h\ge1 \mid f_h\notin(x^{2^h},y^{2^h(1+\alpha_{m+1})},z^{2^h},2)\,\}.
\]
Set $h:=h_{m+1}$.
By \cref{f_h-lemma}(1), we have $h \leq e_{m+1}$.
If $(\dagger)_{m+1}$ does not hold, that is,
\[
f_h\notin(xz)^{2^h-1} +(x^{2^h},y^{2^h(1+\alpha_{m+1})},z^{2^h},2),
\]
then we have
\[
F^{2^h}_mf_{h} \cdot J \nsubseteq \m^{[2^{E_m+h}]},
\]
where $J=(x,z)$.
Since $x\in u(F_*(fxA))$ and $z\in u(F_*(fzA))$, by \cref{stable-1},  we have $h_{m+2}=h_{m+3}=\cdots=1$.
Next, we assume that $(\dagger)_{m+1}$ holds.
Since the coefficient of $xz$ in $G$ is divisible by $2$, it follows from \cref{f_h-lemma}~(1) that
\[
f_h \equiv u\,x^{2^h-1}y^{2^{h-1}-1+(n-r)}z^{2^h-1}
\pmod{(x^{2^h},y^{2^h(1+\alpha_{m+1})},z^{2^h} ,2)}
\]
for some unit $u\in A^\times$, and that $e_{m+1}=h$.
Substituting this into  (\ref{eqn:Fm+1}) and using (\ref{eqn:alpha}), we have
\[
F_{m+1} = F_m^{2^h} f_h  \equiv u_{m+1}x^{2^{E_{m+1}}-1}y^{2^{E_{m+1}}-1-\alpha_{m+2}}z^{2^{E_{m+1}}-1}
\pmod{\m^{[2^{E_{m+1}}]}}
\]
for some $u_{m+1} \in A^{\times}$,
that is, $(\ast)_{m+1}$ holds.
This completes the proof.
\end{proof}

\begin{theorem}\label{no-xz}
We use \cref{notation:D2nr-no-xz}.
Then the following hold:
\begin{enumerate}
  \item There exist integers $s<t$ such that $\alpha_{s+i}=\alpha_{t+i}$ for every $i\ge0$.
  \item $h_i\le e_i$ for every $i\ge0$.
  \item If $h_j<e_j$ for some $j\ge0$, then $h_i=1$ for all $i>j$.
  \item 
  Let $s<t$ be integers as in (1).
  If there exists $j\ge0$ such that $h_{t+j}\ge2$, then $e_i=h_i$ for every $i\ge0$.
\end{enumerate}
In particular, $(h_0, h_1, \ldots)$ is an element of a finite set
\[
([1,e_0]\times[1,e_1]\times\cdots\times[1,e_t]\times\{1\}\times\cdots)\cup\{(e_0,e_1,\ldots)\} \subseteq \prod \Z_{\geq 1}.
\]
Note that $\alpha_0, \alpha_1, \ldots$ and $e_0, e_1, \ldots, $ depend only on $n-r$.
\end{theorem}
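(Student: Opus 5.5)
Part (1) follows from \cref{rem:alpha-bound}. The recursion \eqref{eqn:alpha} determines $\alpha_{i+1}$ (together with $e_i$) from $\alpha_i$ and $n-r$ alone. Since $0\le\alpha_i<n-r$ for all $i$, the sequence $(\alpha_i)$ takes only finitely many values. By the pigeonhole principle there are $s<t$ with $\alpha_s=\alpha_t$. Because the recursion is deterministic, this gives $\alpha_{s+i}=\alpha_{t+i}$ and $e_{s+i}=e_{t+i}$ for all $i\ge0$. The same observation shows that all $\alpha_i,e_i$ depend only on $n-r$, which gives the final remark.

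For (2) and (3) I will run an induction using \cref{F-E-lemma-no-xz}, starting from $m=-1$, where the lemma has no hypotheses. The inductive statement is: either $h_i=e_i$ for all $i\le m$ and $(\ast)_m$ holds, or there is some $j\le m$ such that $h_i=e_i$ for $i<j$, $h_j\le e_j$, and $h_i=1$ for all $i>j$.

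In the first case, \cref{F-E-lemma-no-xz} gives $h_{m+1}\le e_{m+1}$. Then one of two things happens:
\begin{itemize}
\item $(\dagger)_{m+1}$ fails. Then $h_i=1$ for all $i\ge m+2$. Set $j=m+1$, and we are in the second case; note that this can happen even when $h_{m+1}=e_{m+1}$.
\item $(\dagger)_{m+1}$ holds. Then $h_{m+1}=e_{m+1}$ and $(\ast)_{m+1}$ holds, so we continue in the first case.
\end{itemize}

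From this induction, (2) is immediate, using $e_i\ge1$ for indices past $j$. For (3), if $h_j<e_j$ at the first such index $j$, then we cannot be in the $(\dagger)$-holds branch at stage $j$. Hence $(\dagger)_j$ failed, and $h_i=1$ for all $i>j$.

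For (4), suppose $h_{t+j}\ge2$ but $h_i\ne e_i$ for some $i$. By the dichotomy above, the process must have left the first case at some stage $j_0$, after which $h_i=1$ for $i>j_0$. Since $h_{t+j}\ge2$, we get $t+j\le j_0$, with equality possible. The subtle point is to exclude a situation where $(\dagger)$ fails at a late index $j_0\ge t+j$ while $h_{j_0}=e_{j_0}\ge2$. In that situation the stated conclusion $e_i=h_i$ for every $i$ would be false unless such a failure is impossible. This is the step I expect to be the main obstacle.

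My approach is to use periodicity. At stages $s$ and $t$ the lemma's setup is identical: the same $\alpha$, and $(\ast)$ is the same condition up to the unit factor and the Frobenius power. I would like to show that the truth of $(\dagger)_{m}$ is governed by the same data at $m$ and at $m+(t-s)$. Concretely, $(\dagger)_m$ is a statement about $f_{h_m}$ modulo $(x^{2^{h_m}},y^{2^{h_m}(1+\alpha_m)},z^{2^{h_m}},2)$, and it depends only on $\alpha_m$ and $f$. Once $h_m=e_m$ is forced, this depends only on $\alpha_m$. So $(\dagger)_m$ is a function of $\alpha_m$ alone, hence periodic from index $s$ with period $t-s$.

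Now suppose $(\dagger)$ first fails at some index $j_0\ge t$. By periodicity it also fails at $j_0-(t-s)$, which is an earlier index not less than $s$. This contradicts the minimality of $j_0$. Therefore any failure occurs at an index $j_0<t$, and then $h_i=1$ for all $i\ge t$, contradicting $h_{t+j}\ge2$.

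The "In particular" then follows. If some $h_i$ with $i>t$ is at least $2$, then $(h_i)=(e_i)$. Otherwise $h_i=1$ for $i>t$ and $h_i\in[1,e_i]$ for $i\le t$ by (2).
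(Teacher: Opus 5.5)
Your proposal is correct and follows the paper's proof essentially step for step. You use pigeonhole plus determinism of the recursion for (1), the dichotomy from \cref{F-E-lemma-no-xz} for (2) and (3), and periodicity of the condition $(\dagger)$ for (4); your minimal-first-failure argument there is just the contrapositive of the paper's forward induction. One phrase should be repaired: $(\dagger)_m$ depends only on $\alpha_m$ not because ``$h_m=e_m$ is forced'' (which is what you are proving), but because, as long as all earlier $(\dagger)_i$ hold, \cref{F-E-lemma-no-xz} gives $h_m=\inf\{h\mid f_h\notin(x^{2^h},y^{2^h(1+\alpha_m)},z^{2^h},2)\}$, and this is exactly how the paper obtains $h_{m+1}=h_{m+1-(t-s)}$.
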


\begin{proof}
By \cref{rem:alpha-bound}, there exist $s<t$ such that $\alpha_s=\alpha_t$.
Since $e_s$ and $\alpha_{s+1}$ (resp.\ $e_t$ and $\alpha_{t+1}$) are determined by $\alpha_s$ (resp.\ $\alpha_t$), we obtain $e_s=e_t$ and $\alpha_{s+1}=\alpha_{t+1}$.
Repeating this process, we get $\alpha_{s+i}=\alpha_{t+i}$ for all $i\ge0$, proving (1).

We first compute $h_0$.
By \cref{non-ideal-ver}, we have
\[
h_0=\inf\{h\ge1\mid f_h\notin\m^{[2^h]}\}.
\]
By the proof of \cref{F-E-lemma-no-xz}, it follows that $h_0\le e_0$.
Moreover, either $h_1=h_2=\cdots=1$, or $h_0=e_0$ and $(\ast)_0$ holds, namely
\[
f_{e_0}\equiv u_0x^{2^{e_0}-1}y^{2^{e_0}-1-\alpha_1}z^{2^{e_0}-1}\pmod{\m^{[2^{e_0}]}}
\]
for some unit $u_0\in A^\times$.

Next, we prove (2).
We fix an integer $m \geq -1$.
Moreover, if $m\geq 0$,
we assume that $(\ast)_i$ holds and $h_{i}=e_i$ for all $i\le m$.
Then by \cref{F-E-lemma-no-xz}, we have $h_{m+1}\le e_{m+1}$.
Furthermore, either $h_{m+2}=h_{m+3}=\cdots=1$, or   $(\ast)_{m+1}$ holds and $h_{m+1}=e_{m+1}$.
If $h_{m+2}=h_{m+3}=\cdots=1$, then (2) holds.
Thus, we may assume $(\ast)_{m+1}$ holds and $h_{m+1}=e_{m+1}$, thus repeating such a process, we obtain (2).
Furthermore, the assertion (3) follows from the above argument.

Finally, we prove (4).
Since $h_{t+j}\ge2$, it follows that  $(\ast)_{i}$, $(\dagger)_{i}$  and $e_i=h_i$ for $i\le t+j-1$ by \cref{F-E-lemma-no-xz}.
We prove that $(\ast)_m$, $(\dagger)_m$, and $e_m=h_m$ hold by induction on $m$.
By the induction hypothesis, \((\ast)_i\), \((\dagger)_i\), and \(e_i=h_i\) hold for all \(i \le m\).
Moreover, we may assume $m \geq t+j-1$.
Then we have
\[
h_{m+1}=\inf\{h\mid f_h\notin(x^{2^h},y^{2^h(1+\alpha_{m+1})},z^{2^h},2)\}
\]
by \cref{F-E-lemma-no-xz}.
Moreover, the right-hand side of the above equation coincides with $h_{m+1-(t-s)}$ since $m+1-(t-s)\le m$ and $\alpha_{m+1}=\alpha_{m+1-(t-s)}$ by the choice of $s$ and $t$.
Thus, we have $h_{m+1}=h_{m+1-(t-s)}$.
Since $(\dagger)_{m+1}$ depends only on $h_{m+1}$ and $\alpha_{m+1}$, the condition $(\dagger)_{m+1-(t-s)}$ implies the condition $(\dagger)_{m+1}$.
By \cref{F-E-lemma-no-xz}, we have $(\ast)_{m+1}$ and $e_{m+1}=h_{m+1}$ hold, as desired.
\end{proof}

In the special case where $G=0$, we can give below a formula to compute the multi-height (or $\ppt(-,p)$).

\begin{theorem}\label{D2nr}
We use \cref{notation:D2nr-no-xz} and assume $G=0$.
Then we have the following:
\begin{enumerate}
    \item If $\alpha_i < r$ for all $i \ge 0$, then $h_i = e_i$ for all $i \ge 0$, and 
    \[
    \ppt(A/(f),p) = \frac{1}{\,2(n - r) - 1\,}.
    \]
  \item If there exists $m \ge 0$ such that $\alpha_m \ge r$, then taking the minimal such $m$, we have
  \[
    h_i =
    \begin{cases}
      e_i & \text{for } 0 \le i \le m, \\
      1   & \text{for } i \ge m+1.
    \end{cases}
  \]
  \item The multi-height of $A/(f)$ coincides with the naive multi-height of the special fiber $\var{A/(f)} := A/(f) \otimes_{\Z} \Z/p\Z$.  
  Consequently, $\ppt(A/(f),p)$ equals the minimum of the perfectoid pure thresholds among the lifts of type $D^r_{2n}$ and $D^r_{2n+1}$.
\end{enumerate}
\end{theorem}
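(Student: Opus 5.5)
The plan is to run the inductive machinery of \cref{F-E-lemma-no-xz} with $G=0$, where the only input that depends on $G$ is whether $(\dagger)_{m+1}$ holds. I would prove the following claim: assuming $(\ast)_m$ and $h_i=e_i$ for $i\le m$, the condition $(\dagger)_{m+1}$ holds if and only if $\alpha_{m+1}<r$. Given the claim, (1) and (2) follow at once from \cref{F-E-lemma-no-xz}, by induction on $m$ starting from $m=-1$:
\begin{itemize}
\item if $\alpha_i<r$ for all $i$, then $(\ast)_{m+1}$ and $h_{m+1}=e_{m+1}$ propagate forever;
\item at the first $m$ with $\alpha_m\ge r$ (applied at index $m$), $(\dagger)_m$ fails, so $h_i=1$ for $i\ge m+1$.
\end{itemize}
In case (2) we still get $h_m=e_m$. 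This is because $f_{e_m}$ has no surviving monomial for $h<e_m$: the lower bound $h\ge e_m$ comes from the same degree counting as in \cref{f_h-lemma}(1), since $G=0$ kills all $G$-contributions.

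To prove the claim, reduce modulo $2$. With $G=0$, $\Delta_1(f)\bmod 2$ is the sum of pairwise products of the four monomials of $f$. So $f_h=f\Delta_1(f)^{1+\cdots+2^{h-2}}$ is a sum of explicit monomials, and I must find those not in $(xz)^{2^h-1}+(x^{2^h},y^{2^h(1+\alpha)},z^{2^h})$. Any such monomial has odd $x$-degree (respectively odd $z$-degree in the $D_{2n+1}$ case) but even $z$-degree. Arguing as in \cref{f_h-lemma}, the factor taken from $f$ must then be $xy^n$ (resp.\ $y^nz$) rather than $xy^{n-r}z$. The extra $y$-degree is therefore $r$, and the minimal surviving candidate is $x^{2^h-1}y^{2^{h-1}-1+n}z^{2^h-2}$ (or its analogue). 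This candidate lies outside the ideal exactly when $2^{h-1}-1+n\le 2^h(1+\alpha)-1$, that is, when $r\le \alpha_{m+1}$ by \eqref{eqn:alpha} with $h=e_{m+1}$. I also have to check that its coefficient is odd (a unique factorization, as in the proof of \cref{f_h-lemma}) and that when $h<e_{m+1}$ nothing survives. This parity and uniqueness bookkeeping is the main obstacle, and I would handle it by the same $y$-order comparison of $M_1,\dots,M_{h-1}$ used there.

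For the formula in (1), \Cref{mult-ht-to-ppt} with $p=2$ gives $\ppt=\sum_{i\ge0}2^{-E_i}$. Put $x_i:=2\alpha_i+1$ and $D:=2(n-r)-1$. Then \eqref{eqn:alpha} becomes $x_{i+1}=2^{e_i}x_i-D$, so
\[
x_i2^{-E_{i-1}}-x_{i+1}2^{-E_i}=D\,2^{-E_i}.
\]
The $x_i$ are bounded by \cref{rem:alpha-bound}, so the sum telescopes to $x_0/D=1/D$.

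For (3), I would show that every ideal in the computation above is already computed by the naive ideals $\var{I}_{(\cdots)}$. The elements produced in the recursion of \cref{non-ideal-ver}, before applying $\theta$, lie in $\Ker(u)$ because of the parity of their exponents. So the naive construction reproduces the same multi-height, and by \cref{multi-height-positive-char} the naive multi-height depends only on $\var f$. For an arbitrary lift $f+2G$, \cref{defn:naive_heights} shows that its multi-height is lexicographically at most the naive one. Finally, in the formula of \Cref{mult-ht-to-ppt} a lexicographically larger sequence places its first differing partial sum later, and hence gives a smaller $\ppt$ (a direct binary-expansion comparison). So $\ppt(A/(f),p)$ is the minimum over all lifts.
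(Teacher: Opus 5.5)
Your overall route is the paper's: run \cref{F-E-lemma-no-xz} with $G=0$, decide $(\dagger)$ using the extra monomial $g$ coming from $xy^n$ (resp.\ $y^nz$), telescope the sum (your $x_i/D$ is the paper's $b_i$), and compare with the naive ideals. The gap is in your key claim, where the substitution into \eqref{eqn:alpha} is off by one. At level $m+1$ the test ideal is $(x^{2^h},y^{2^h(1+\alpha_{m+1})},z^{2^h},2)$ with $h=e_{m+1}$. For the index $m+1$, \eqref{eqn:alpha} reads $2^{h}(1+\alpha_{m+1})=2^{h-1}+(n-r)+\alpha_{m+2}$. Hence $x^{2^h-1}y^{2^{h-1}-1+n}z^{2^h-2}$ survives iff $r\le\alpha_{m+2}$, not iff $r\le\alpha_{m+1}$. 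This is the same inequality the paper uses in (1), where level $i$ requires $\alpha_{i+1}<r$. So the correct dichotomy is $(\dagger)_j\Leftrightarrow\alpha_{j+1}<r$, given $h_j=e_j$. Part (1) is unaffected, since it assumes every $\alpha_i<r$. In (2), however, the minimal $m$ with $\alpha_m\ge r$ satisfies $m\ge 1$ (because $\alpha_0=0<r$), and it is $(\dagger)_{m-1}$ that fails. This gives $h_i=e_i$ for $i\le m-1$ and $h_i=1$ for $i\ge m$, which is not what (2) asserts.

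So (2) as written cannot be reached by your argument, and in fact it fails. Take $n=17$, $r=3$. Then $(\alpha_0,\alpha_1,\alpha_2)=(0,2,6)$ and $(e_0,e_1,e_2)=(5,3,2)$, so $m=2$.
\begin{itemize}
\item One checks $h_0=5$, that $(\ast)_0$ holds with $f_5\equiv x^{31}y^{29}z^{31}$, and that $h_1=3$.
\item In $f_3$ the monomial $x^7y^{20}z^6$ occurs with coefficient $1$ modulo $2$, arising only as $(z^2\cdot x^2y\cdot xy^{17})(x^2yz^2)^2$. It lies outside $(xz)^7+(x^8,y^{24},z^8,2)$.
\item Thus $(\dagger)_1$ fails, so $\bh=(5,3,1,1,\dots)$ and $\ppt=5/128$. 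This is exactly $t_3$ in the paper's Example.
\end{itemize}
Statement (2) would instead predict $(5,3,2,1,\dots)$, i.e.\ $\ppt=19/512$.

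The paper's own proof of (2) contains the same slip. It needs $g\notin(\dots,y^{2^{e_m}(1+\alpha_m)},\dots)$, which amounts to $\alpha_{m+1}\ge r$, and its $g'$ has $y$-exponent $\alpha_{m+1}-r$. Its Example is consistent only with the shifted indexing.

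The naive-height half of (3) in case (2) must be shifted in the same way. There, moreover, your remark that ``elements of the recursion lie in $\Ker(u)$ by parity'' is not yet an argument. You need explicit witnesses, as the paper uses:
\begin{itemize}
\item $fy^{\alpha_{i+1}}\in\Ker(u)$ with $\theta^{e_i-1}(F^{e_i-1}_*(fy^{\alpha_{i+1}}))=xy^{2\alpha_i+1}z$;
\item after the break, $x,z\in\var{I}_{(1,\dots,1)}$, together with the element $fg'$.
\end{itemize}
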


\begin{proof}
First, assume $\alpha_i < r$ for all $i \ge 0$.
By a computation as in the proof of \cref{f_h-lemma},
\[
f_{e_i} \equiv x^{2^{e_i}-1}y^{2^{e_i-1}-1+(n-r)}z^{2^{e_i}-1} \pmod{(x^{2^{e_i}},y^{2^{e_i}(1+\alpha_i)},z^{2^{e_i}},2)}
\]
since 
\[
2^{e_i-1}-1+n = 2^{e_i-1}-1+n-r+r \ge 2^{e_i-1}+n-r+\alpha_{i+1} = 2^{e_i}(1+\alpha_i)
\]
for every $i \ge 0$.
Therefore, by  \cref{F-E-lemma-no-xz}, we have $e_i=h_i$ for all $i \ge 0$
(note that $h_0 = e_0$ follows from \Cref{f_h-lemma} and \Cref{non-ideal-ver}).

Set 
\[
b_i:=\frac{1+2\alpha_i}{2(n-r)-1}
\]
for $i \ge 0$.
Then $0 < b_i \le 1$, since $0\leq \alpha_i \le n-r-1$.
By the definition of $\alpha_i$, we have
\[
2^{e_i}b_i = \frac{2^{e_i}(1+2\alpha_i)}{2(n-r)-1}
 = \frac{2((n-r)+\alpha_{i+1})}{2(n-r)-1}
 = 1 + b_{i+1}
\]
for every $i \ge 0$.
Considering the $p$-adic expansion of $b_i$, we obtain
\[
b_i = \frac{1}{2^{e_i}} + \frac{1}{2^{e_i+e_{i+1}}} + \cdots,
\]
and in particular $b_0 = \ppt(A/(f),p)$ by \cref{mult-ht-to-ppt}.
On the other hand,
\[
b_0 = \frac{1+2\alpha_0}{2(n-r)-1} = \frac{1}{2(n-r)-1}.
\]
Thus we obtain (1).

Next, we prove (3) under the assumption $\alpha_i<r$ for all $i \ge 0$.
Let $(\var{h}_0,\var{h}_1,\ldots)$ denote the naive multi-height of $\var{A/(f)}$.
We show that $\var{h_i}=e_i$ by induction on $i \ge 0$.

The following computation for the case $i=0$ has already been carried out in \cite{KTY25}*{Section~4.7}; however, we include it here for later use.
For $i=0$, if $e_0=1$, we have $e_0=\var{h}_0$
since $\var{A/(f)}$ is $F$-pure.
If $e_0 \ge 2$, then $fy^{\alpha_1} \in \var{I}_1$ and $u(F_*(fy^{\alpha_1}))=0$ because $n-r+\alpha_1$ is even.
Therefore,
\[
\theta(F_*(fy^{\alpha_1}))=xy^{2^{e_0-2}}z \in \var{I}_2.
\]
Since $xy^{2^{e_0-2}}z \in \Ker(u)$ if $e_0 \ge 3$, we have
\[
\theta^{e_0-1}(F^{e_0-1}_*(fy^{\alpha_1}))=xyz \in \var{I}_{e_0}.
\]
Hence $\var{h}_0 \le e_0$.
By (1) and \cref{multi-height-positive-char}, we obtain $\var{h}_0=e_0$, as desired.

Now assume $i \ge 1$.
By the same argument as above, we have $fy^{\alpha_{i+1}} \in \Ker(u)$ if $e_i \ge 2$, and
\[
\theta^{e_i-1}(F^{e_i-1}_*(fy^{\alpha_{i+1}}))=xy^{2\alpha_i+1}z \in \var{I}_{e_i}.
\]
Therefore,
\[
u(F_*xy^{2\alpha_i+1}z)f=fy^{\alpha_i} \in \var{I}_{(1,e_i)}.
\]
Repeating this process, we obtain $fy^{\alpha_j} \in \var{I}_{(1,e_j,e_{j+1},\ldots,e_i)}$ for $1 \leq j \leq i$ and $xyz \in \var{I}_{(e_0,e_1,\ldots,e_i)}$, which implies $\var{h}_i \le e_i$.
By (1) and \cref{multi-height-positive-char}, we obtain $\var{h}_i = e_i$ as desired.

Next, assume that there exists $m \ge 0$ such that $\alpha_m \ge r$, and take the minimal such $m$.
By the same argument as in the proof of (1), we have $h_i=e_i$ for $i \le m$.
Moreover, $(\ast)_i$ hold for $i\leq m-1$.
Furthermore, by a computation as in the proof of \cref{f_h-lemma},
\[
f_{e_m} \equiv x^{2^{e_m}-1}y^{2^{e_m-1}-1+(n-r)}z^{2^{e_m}-1} + g \pmod{(x^{2^{e_m}},y^{2^{e_m}(1+\alpha_m)},z^{2^{e_m}},2)},
\]
where
\[
g:=
\begin{cases}
    x^{2^{e_m}-1}y^{2^{e_m-1}-1+n}z^{2^{e_m}-2} & \text{if } f=z^2+x^2y+xy^n+xy^{n-r}z, \\
    x^{2^{e_m}-2}y^{2^{e_m-1}-1+n}z^{2^{e_m}-1} & \text{if } f=z^2+x^2y+y^nz+xy^{n-r}z.
\end{cases}
\]
In particular, since $(\dagger)_m$ does not hold, we have $h_i=1$ for $i \ge m+1$ by \cref{F-E-lemma-no-xz}, and thus (2) follows.

Finally, we prove (3) under the assumption of (2).
From the above argument, we have $\var{h}_i=e_i$ for $i \le m$.
Since $x \in u(F_*(xf\var{A}))$ and $z \in u(F_*(zf\var{A}))$, 
it follows that $x,z \in \var{I}_{(1,1,\ldots,1)}$.
Put
\[
g':=
\begin{cases}
y^{2^{e_m}\alpha_m+2^{e_m-1}-n}z & \text{if } f=z^2+x^2y+xy^n+xy^{n-r}z, \\
xy^{2^{e_m}\alpha_m+2^{e_m-1}-n} & \text{if } f=z^2+x^2y+y^nz+xy^{n-r}z.
\end{cases}
\]
Then we have $fg' \in \Ker(u)$, and
\[
\theta^i(F^i_*(fg'))=xy^{2^{e_m-i}\alpha_m+2^{e_m-1-i}}z \in \Ker(u)
\]
for $1 \le i \le e_m-2$.
Therefore,
\[
\theta^{e_m-1}(F^{e_m-1}_*(fg'))=xy^{2\alpha_m+1}z \in \var{I}_{(e_m,1,\ldots,1)}.
\]
Thus, we obtain $fy^{\alpha_m} \in \var{I}_{(1,e_m,1,\ldots)}$, and by (\ref{eqn:alpha}) and a similar computation as above, we have
\[
\theta^{e_{m-1}-1} (F_{*}^{e_{m-1}-1} (fy^{\alpha_m})) = xy^{2 \alpha_{m-1} +1}z \in  \var{I}_{(e_{m-1}, e_m, 1, \ldots, 1)}.
\]
Repeating this argument, we obtain 
\[
xy^{2 \alpha_0 +1}z = xyz \in  \var{I}_{(e_0, e_1, \ldots, e_m, 1,1, \ldots)}.
\]
Therefore, we have $\var{h}_i =1$ for any $i>m$ as desired.
\end{proof}

\subsection{Lifts of \texorpdfstring{$D_{2n}^r$}{D(2n)r} and \texorpdfstring{$D_{2n+1}^r$}{D(2n+1)r} with the monomial \texorpdfstring{$xz$}{xz}}

\begin{notation}\label{notation:D2nr-xz}
Let $p=2$, $A=W(k)[[x,y,z]]$, and $\m:=(x,y,z,2)$, and set
\[
f = z^2 + x^2y + xy^n + xy^{n-r}z + 2G
\quad \text{or} \quad
f = z^2 + x^2y + y^n z + xy^{n-r}z + 2G,
\]
where $r,n$ are integers satisfying $r<n$ and $G\in A$.
Assume moreover that the coefficient of $xz$ in $G$ is not divisible by $2$.

We inductively define two sequences of integers $(\alpha_i)_{i\ge0}$ and $(e_i)_{i\ge0}$ as follows:
\begin{itemize}
  \item $\alpha_0=0$;
  \item for each $i\ge0$, having defined $\alpha_i$, let $e_i$ be the smallest integer satisfying
  \[
    2^{e_i}(1+\alpha_i)-1-(n-r)\ge0,
  \]
  and then define
  \[
    \alpha_{i+1}=2^{e_i}(1+\alpha_i)-1-(n-r).
  \]
\end{itemize}
We note that 
\begin{equation}
\label{eqn:xz-e0}
e_0 = \rup{\log_2 (n-r+1)},
\end{equation}
 $e_i \geq 1$, and $n-r > \alpha_i \geq 0$ for all $i \geq 0$ by the same argument as in \cref{rem:alpha-bound}.
Let $(h_0,h_1,\ldots)$ denote the multi-height of $A/(f)$.
Set $f_1:=f$ and
\[
f_i:=f\,\Delta_1(f)^{1+2+\cdots+2^{i-2}}\qquad(i\ge2).
\]
For $i\ge0$, put
\[
E_i:=e_0+\cdots+e_{i-1}+e_i,\qquad
F_i:=f_{e_0}^{2^{e_1+\cdots+e_{i-1}+e_i}}\cdots f_{e_{i-1}}^{2^{e_i}}f_{e_i}.
\]
Moreover, we set $E_{-1} :=0$ and $F_{-1}:=1.$

For each $m\ge0$, we define the conditions $(\ast)_m$ and  $(\dagger)_{m}$
by
\begin{align*}
    (\ast)_m:\quad& 
F_m \equiv u_m x^{2^{E_m}-1}y^{2^{E_m}-1-\alpha_{m+1}}z^{2^{E_m}-1}
\pmod{\m^{[2^{E_m}]}}
\quad\text{for some }u_m\in A^\times \\
(\dagger)_{m}:\quad& f_{h_m} \in (xz)^{2^{h_m}-1}+(x^{2^{h_m}},y^{2^{h_m}(1+\alpha_m)},z^{2^{h_m}},2).
\end{align*}
\end{notation}

\begin{lemma}\label{F-E-lemma-xz}
We use \cref{notation:D2nr-xz}.
We fix an integer $m \geq -1$.
Moreover, if $m\geq 0$, we
assume that $(\ast)_m$ holds and $h_i=e_i$ for all $ 0 \leq i \leq m$.
Then
\[
h_{m+1}=\inf\{h\mid f_h\notin(x^{2^h},y^{2^h(1+\alpha_{m+1})},z^{2^h},2)\}
\]
and $h_{m+1} \leq e_{m+1}$.
Furthermore, if $(\dagger)_{m+1}$ does not hold, then $h_{m+2}=h_{m+3}=\cdots=1$.
Otherwise,  $h_{m+1}=e_{m+1}$ and $(\ast)_{m+1}$ hold.
\end{lemma}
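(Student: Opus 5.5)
The argument will run parallel to the proof of \cref{F-E-lemma-no-xz}, with \cref{f_h-lemma}(2) replacing \cref{f_h-lemma}(1).

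\textbf{Step 1: reduce to the modified ideal.} By \cref{non-ideal-ver},
\[
h_{m+1}=\inf\{h\ge1 \mid F_m^{2^h}f_h\notin\m^{[2^{E_m+h}]}\}.
\]
When $m=-1$ we have $F_{-1}=1$ and $\alpha_0=0$, so the asserted formula is just this one. When $m\ge0$ we use $(\ast)_m$ and raise to the $2^h$-th power:
\[
F_m^{2^h}f_h\equiv u_m^{2^h}x^{2^{E_m+h}-2^h}y^{2^{E_m+h}-2^h(1+\alpha_{m+1})}z^{2^{E_m+h}-2^h}f_h \pmod{\m^{[2^{E_m+h}]}}.
\]
A monomial times $f_h$ lies in $\m^{[2^{E_m+h}]}$ exactly when $f_h$ lies in the colon ideal $(x^{2^h},y^{2^h(1+\alpha_{m+1})},z^{2^h},2)$. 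This gives the displayed formula for $h_{m+1}$.

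\textbf{Step 2: bound $h_{m+1}$.} Since the coefficient of $xz$ in $G$ is odd, \cref{f_h-lemma}(2) with $\alpha=\alpha_{m+1}$ gives $h_{m+1}\le e_{xz}$. By definition, $e_{xz}=e_{m+1}$ in \cref{notation:D2nr-xz}, so $h_{m+1}\le e_{m+1}$.

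\textbf{Step 3: the case where $(\dagger)_{m+1}$ fails.} Write $h=h_{m+1}$. Since $f_h$ avoids the colon ideal, it contains a monomial outside it whose exponents are $(a,b,c)$ with $a<2^h$, $b<2^h(1+\alpha_{m+1})$, $c<2^h$. Failure of $(\dagger)_{m+1}$ means this monomial can be chosen with $a<2^h-1$ or $c<2^h-1$. Multiplying by $x$ or $z$ keeps it outside the ideal, so
\[
F_m^{2^h}f_h\cdot J\nsubseteq \m^{[2^{E_m+h}]},\qquad J=(x,z).
\]
We have $x\in u(F_*(fxA))$ and $z\in u(F_*(fzA))$, so $J\subseteq u(F_*I^J_1)$. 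Now \cref{stable-1}(2), applied with $a=x$ or $a=z$, gives $h_i=1$ for all $i\ge m+2$.

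\textbf{Step 4: the case where $(\dagger)_{m+1}$ holds.} By \cref{f_h-lemma}(2), $h=e_{m+1}$ and
\[
f_h\equiv u\,x^{2^h-1}y^{n-r}z^{2^h-1} \pmod{(x^{2^h},y^{2^h(1+\alpha_{m+1})},z^{2^h},2)}.
\]
We substitute this into the congruence for $F_{m+1}=F_m^{2^h}f_h$. The $x$ and $z$ exponents become $2^{E_{m+1}}-1$. The $y$ exponent becomes
\[
2^{E_{m+1}}-2^h(1+\alpha_{m+1})+(n-r)=2^{E_{m+1}}-1-\alpha_{m+2},
\]
using the recursion $\alpha_{m+2}=2^{e_{m+1}}(1+\alpha_{m+1})-1-(n-r)$. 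This exponent is below $2^{E_{m+1}}$ because $\alpha_{m+2}\ge0$.

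\textbf{Main obstacle.} The remaining point is that the error terms are harmless. Modulo $\m^{[2^{E_{m+1}}]}$, the colon-ideal error becomes the ideal in the definition of $(\ast)_{m+1}$, because multiplying by the prefactor monomial carries the colon ideal back into $\m^{[2^{E_{m+1}}]}$. This is the same bookkeeping as in the no-$xz$ case, so $(\ast)_{m+1}$ holds.
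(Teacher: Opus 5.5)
Your proposal is correct and follows essentially the same route as the paper. You reduce via \cref{non-ideal-ver} and $(\ast)_m$ to the colon ideal $(x^{2^h},y^{2^h(1+\alpha_{m+1})},z^{2^h},2)$, bound $h_{m+1}$ using \cref{f_h-lemma}(2), apply \cref{stable-1} with $J=(x,z)$ when $(\dagger)_{m+1}$ fails, and otherwise substitute the normal form of $f_h$ and use the recursion for $\alpha_{m+2}$. You in fact supply more detail than the paper, which simply defers to the proof of \cref{F-E-lemma-no-xz}.
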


\begin{proof}
The argument is identical to that of \cref{F-E-lemma-no-xz}, 
except for the use of \cref{f_h-lemma}~(2) instead of \cref{f_h-lemma}~(1), 
which reflects the presence of the monomial $xz$ in $G$.

Indeed, under the same setup as in \cref{F-E-lemma-no-xz}, 
we may assume
\[
f_h\in(xz)^{2^h-1}+(x^{2^h},y^{2^h(1+\alpha_{m+1})},z^{2^h},2).
\]
Then, by \cref{f_h-lemma}~(2), we obtain
\[
f_h \equiv u\,x^{2^h-1}y^{n-r}z^{2^h-1}
\pmod{(x^{2^h},y^{2^h(1+\alpha_{m+1})},z^{2^h},2)}
\]
for some unit $u\in A^\times$, and $e_{m+1}=h$.
Substituting this into the definition of $F_{m+1}$ yields
\[
F_{m+1} \equiv u_{m+1}x^{2^{E_{m+1}}-1}y^{2^{E_{m+1}}-1-\alpha_{m+2}}z^{2^{E_{m+1}}-1}
\pmod{\m^{[2^{E_{m+1}}]}}
\]
for some $u_{m+1} \in A^{\times}$,
that is, $(\ast)_{m+1}$ holds.
This completes the proof.
\end{proof}

\begin{theorem}\label{xz}
We use \cref{notation:D2nr-xz}.
Then the following hold:
\begin{enumerate}
  \item There exist integers $s<t$ such that $\alpha_{s+i}=\alpha_{t+i}$ for every $i\ge0$.
  \item $h_i\le e_i$ for every $i\ge0$.
  \item If $h_j<e_j$ for some $j\ge0$, then $h_i=1$ for all $i>j$.
  \item 
  Let $s<t$ be integers as in (1).
  If there exists $j\ge0$ such that $h_{t+j}\ge2$, then $e_i=h_i$ for every $i\ge0$.
\end{enumerate}
In particular, $(h_0, h_1, \ldots)$ is an element of a finite set
\[
([1,e_0]\times[1,e_1]\times\cdots\times[1,e_t]\times\{1\}\times\cdots)\cup\{(e_0,e_1,\ldots)\} \subseteq \prod \Z_{\geq 1}.
\]
Note that $\alpha_0, \alpha_1, \ldots$ and $e_0, e_1, \ldots, $ depend only on $n-r$.
\end{theorem}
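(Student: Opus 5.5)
The plan is to copy the proof of \cref{no-xz} line by line, replacing every appeal to \cref{F-E-lemma-no-xz} and \cref{f_h-lemma}~(1) by \cref{F-E-lemma-xz} and \cref{f_h-lemma}~(2), and the recursion $\alpha_{i+1}=2^{e_i}(1+\alpha_i)-2^{e_i-1}-(n-r)$ by $\alpha_{i+1}=2^{e_i}(1+\alpha_i)-1-(n-r)$.

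For (1), we first check the analogue of \cref{rem:alpha-bound}, namely $0\le\alpha_i<n-r$ and $e_i\ge1$. Suppose $\alpha_i\ge0$. The minimality of $e_i$ gives $2^{e_i-1}(1+\alpha_i)\le n-r$ when $e_i\ge2$, and hence $\alpha_{i+1}=2\cdot 2^{e_i-1}(1+\alpha_i)-1-(n-r)\le n-r-1$. When $e_i=1$, we have $\alpha_{i+1}=2\alpha_i+1-(n-r)$, which is again $<n-r$ because $\alpha_i<n-r$. Next, $e_{i+1}\ge1$, since $(1+\alpha_{i+1})-1-(n-r)<0$. So the $\alpha_i$ take values in a finite set. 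Each $\alpha_i$ determines $e_i$ and $\alpha_{i+1}$, so a single coincidence $\alpha_s=\alpha_t$ with $s<t$ propagates to $\alpha_{s+i}=\alpha_{t+i}$ for all $i\ge0$.

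For (2) and (3), we apply \cref{F-E-lemma-xz} with $m=-1$. This gives $h_0=\inf\{h\mid f_h\notin\m^{[2^h]}\}\le e_0$. It also gives a dichotomy: either $(\dagger)_0$ fails and $h_i=1$ for all $i\ge1$, or $h_0=e_0$ and $(\ast)_0$ holds. In the second case the hypotheses of \cref{F-E-lemma-xz} hold for $m=0$, and we iterate. By induction, at each step either $h_{m+1}\le e_{m+1}$ and all later $h_i$ equal $1$, or $h_{m+1}=e_{m+1}$ and $(\ast)_{m+1}$ holds. This gives $h_i\le e_i$ for every $i$. It also shows that the first index $j$ with $h_j<e_j$ must be a failure of $(\dagger)_j$, which forces $h_i=1$ for $i>j$.

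For (4), suppose $h_{t+j}\ge2$. By (3), no index $i<t+j$ can have $h_i<e_i$. So the iteration above never branched for $i\le t+j-1$, and $(\ast)_i$, $(\dagger)_i$ and $h_i=e_i$ hold there. We then induct on $m\ge t+j-1$. By \cref{F-E-lemma-xz}, $h_{m+1}$ equals $\inf\{h\mid f_h\notin(x^{2^h},y^{2^h(1+\alpha_{m+1})},z^{2^h},2)\}$, which depends only on $\alpha_{m+1}$. Since $\alpha_{m+1}=\alpha_{m+1-(t-s)}$, this gives $h_{m+1}=h_{m+1-(t-s)}$. Condition $(\dagger)$ depends only on the pair $(h,\alpha)$, so $(\dagger)_{m+1-(t-s)}$ implies $(\dagger)_{m+1}$. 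Then \cref{F-E-lemma-xz} yields $(\ast)_{m+1}$ and $h_{m+1}=e_{m+1}$.

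The step that needs care is the start of this induction: we must know $(\dagger)$ at an index $\le m$ that is congruent to $m+1$ modulo $t-s$. Because $m+1-(t-s)\le t+j-1-(t-s)+1\le m$, that index is covered by the inductive hypothesis. The finiteness statement then follows. If some $h_j<e_j$ with $j\le t$, the tail is all $1$'s. Otherwise $h_i=e_i$ for $i\le t$, and either all later $h_i$ are $1$ or (4) gives $h=(e_i)$. Finally, the $\alpha_i,e_i$ depend only on $n-r$ by their definition.
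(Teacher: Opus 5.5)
Your proposal is correct and is essentially the paper's proof. The paper simply reruns the argument of \cref{no-xz} with \cref{F-E-lemma-xz} in place of \cref{F-E-lemma-no-xz}, and your explicit check of $0\le\alpha_i<n-r$ and $e_i\ge1$ for the new recursion is what \cref{notation:D2nr-xz} asserts ``by the same argument.'' One small fix in (4): $(\dagger)_i$ holds for all $i\le t+j-1$ not because of (3), but because a failure of $(\dagger)_i$ at such an $i$ would force $h_{t+j}=1$ by \cref{F-E-lemma-xz}; note that $(\dagger)_i$ can fail even when $h_i=e_i$.
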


\begin{proof}
The proof is identical to that of \cref{no-xz}, 
replacing \cref{F-E-lemma-no-xz}  with 
\cref{F-E-lemma-xz}.

In particular, by the same argument as in the proof of \cref{no-xz}, 
we obtain the periodicity of $(\alpha_i)$, that is, there exist integers $s<t$ such that 
$\alpha_{s+i}=\alpha_{t+i}$ for all $i\ge0$.
Furthermore, if $h_{t+j}\ge2$ for some $j\ge0$, 
then by the same inductive argument, 
we conclude that $h_i=e_i$ for all $i\ge0$.
Hence we obtain the desired inclusion.
\end{proof}

\begin{theorem}\label{D2nr-xz}
We use \cref{notation:D2nr-xz} and assume $G=xz$.
If \(\alpha_i < r\) for all \(i \ge 0\), then \(h_i = e_i\) for all \(i \ge 0\) and 
\[
\ppt(A/(f),p) = \frac{1}{n - r}.
\]
\end{theorem}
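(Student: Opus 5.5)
The proof follows the template of \cref{D2nr}~(1), with \cref{f_h-lemma}~(2) and \cref{F-E-lemma-xz} in place of their no-$xz$ counterparts. It has two parts. First, show that with $G=xz$ and $\alpha_i<r$ for all $i$, the condition $(\dagger)_m$ holds at every stage, so that $h_i=e_i$ and $(\ast)_i$ propagate. Second, sum the resulting $2$-adic expansion via \cref{mult-ht-to-ppt}.

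For the first part, I will compute $f_{e_i}$ modulo $(x^{2^{e_i}},y^{2^{e_i}(1+\alpha_i)},z^{2^{e_i}},2)$ directly, as in the proof of \cref{f_h-lemma}~(2). Since $G=xz$, we have $\Delta_1(f)\equiv \Delta_1(f-2xz)+x^2z^2 \pmod 2$. Tracking monomials with odd $x$- and $z$-exponents forces the first factor to contribute $x^3y^{n-r}z^3$ and the later factors to contribute $(xz)^{2^j}$. The only competing terms are those involving $xy^n$ (resp.\ $y^nz$) or other monomials of $f-2xz$, and these carry $y$-exponent at least $n\ge n-r+\alpha_{i+1}+1$ because $\alpha_{i+1}<r$. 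Combined with $n-r+\alpha_{i+1}=2^{e_i}(1+\alpha_i)-1$, such terms land in $y^{2^{e_i}(1+\alpha_i)}$ and vanish modulo the ideal. This gives
\[
f_{e_i}\equiv x^{2^{e_i}-1}y^{n-r}z^{2^{e_i}-1}\pmod{(x^{2^{e_i}},y^{2^{e_i}(1+\alpha_i)},z^{2^{e_i}},2)}.
\]
I also need $f_h$ to lie in the ideal for $h<e_i$, which gives $h_i=e_i$ rather than merely $h_i\le e_i$. By minimality of $e_i$, any surviving monomial would need $y$-exponent at least $n-r>2^{h}(1+\alpha_i)-1$, so none survives. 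Hence $(\dagger)_i$ holds. Applying \cref{F-E-lemma-xz} inductively from $m=-1$ (for $h_0$, use \cref{non-ideal-ver} with $\alpha_0=0$) yields $h_i=e_i$ and $(\ast)_i$ for all $i$.

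For the second part, set $b_i:=(1+\alpha_i)/(n-r)$. From $0\le\alpha_i<n-r$ we get $0<b_i\le 1$. The recursion $\alpha_{i+1}+1=2^{e_i}(1+\alpha_i)-(n-r)$ gives $2^{e_i}b_i=1+b_{i+1}$. Iterating yields
\[
b_0=\sum_{k\ge0}2^{-(e_0+\cdots+e_k)},
\]
where the tail vanishes in the limit because $b_k\le 1$. For $p=2$, \cref{mult-ht-to-ppt} gives $\ppt=\sum_{m}c_m/2^m$ with $c_m=1$ exactly at $m=h_0+\cdots+h_r$ and $c_m=0$ otherwise. This is precisely $b_0=1/(n-r)$.

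The main obstacle is the monomial bookkeeping in the first part: ruling out every cross term in $f\Delta_1(f)^{1+\cdots+2^{e_i-2}}$ other than the claimed one, and checking that the leading coefficient is odd. The odd-exponent parity argument of \cref{f_h-lemma} handles most of it. I would first try to show that any term using $xy^n$ (or $y^nz$) or a $G$-term other than $x^2z^2$ exceeds the $y$-bound exactly when $\alpha_{i+1}<r$. This is where the hypothesis $\alpha_i<r$ is used.
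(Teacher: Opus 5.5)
Your proposal is correct and follows the paper's proof essentially step for step: the same reduction of $f_{e_i}$ modulo $(x^{2^{e_i}},y^{2^{e_i}(1+\alpha_i)},z^{2^{e_i}},2)$ using $n\ge 2^{e_i}(1+\alpha_i)$, the same inductive use of \cref{F-E-lemma-xz}, and the same telescoping with $b_i=(1+\alpha_i)/(n-r)$. Your explicit check that $f_h$ lies in that ideal for $h<e_i$ fills in a step the paper leaves implicit.

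One small correction: the congruence for $f_{e_i}$ holds only up to a unit factor, as in the paper. Further monomials $x^{2^{e_i}-1}y^{n-r+s}z^{2^{e_i}-1}$ with $s\ge1$ can survive; for example, for $n-r=5$ one gets $f_3\equiv x^7z^7(y^5+y^6+y^7)$ modulo $(x^8,y^8,z^8,2)$. This is harmless for $(\dagger)_i$ and the rest of the argument.
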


\begin{proof}
Since $\alpha_i < r$ for all $i \ge 0$, by a computation as in the proof of \cref{f_h-lemma}, we have
\[
f_{e_i} \equiv 
u_i
x^{2^{e_i}-1}y^{n-r}z^{2^{e_i}-1} \pmod{(x^{2^{e_i}},y^{2^{e_i}(1+\alpha_i)},z^{2^{e_i}},2)}
\]
for some unit $u_i \in A^\times$ by
\[
n = n-r+r \ge n-r+\alpha_{i+1}+1 = 2^{e_i}(1+\alpha_i)
\]
for every $i \ge 0$.
Therefore, by \cref{F-E-lemma-xz}, we have $e_i=h_i$ for all $i \ge 0$.

Set 
\[
b_i:=\frac{1+\alpha_i}{n-r}
\]
for $i \ge 0$.
Then $0 < b_i \le 1$, since $\alpha_i \le n-r-1$.
By the definition of $\alpha_i$, we have
\[
2^{e_i}b_i = \frac{2^{e_i}(1+\alpha_i)}{n-r}
 = \frac{n-r+1+\alpha_{i+1}}{n-r}
 = 1 + b_{i+1}
\]
for every $i \ge 0$.
Considering the $p$-adic expansion of $b_i$, we obtain
\[
b_i = \frac{1}{2^{e_i}} + \frac{1}{2^{e_i+e_{i+1}}} + \cdots,
\]
and in particular $b_0 = \ppt(A/(f),p)$ by \cref{mult-ht-to-ppt}.
On the other hand,
\[
b_0 = \frac{1+\alpha_0}{n-r} = \frac{1}{n-r},
\]
as desired.
\end{proof}

\section{Rationality and the Ascending Chain Condition}

\begin{theorem}\label{ACC}
Let $p$ be a prime number and $k$ an algebraically closed field of characteristic $p$.  
Consider the set
\[
\Sigma := \{ \ppt(R,p) \mid \text{$R$ is a $W(k)$-lift of an RDP of characteristic $p$ over $k$} \}.
\]
Then $\Sigma$ is contained in $\Q$ and satisfies the ascending chain condition.  
Moreover, if $p=2$, the set $\Sigma$ contains all numbers of the form $1/m$ with $m \in \Z_{\ge 1}$.    
In addition, every accumulation point of $\Sigma$ is $0$.
\end{theorem}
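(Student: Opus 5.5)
By \cref{lift-RDP}, every $R$ in question is of the form $W(k)[[x,y,z]]/(f+pG)$, where $f$ is the natural lift of an equation from Artin's list and $G$ is arbitrary. I would split $\Sigma$ according to the type of $\var{R}$.

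The easy types contribute only finitely many values, all rational. For taut RDPs, and for all non-taut types other than $D_{2n}^r$ and $D_{2n+1}^r$ with $1\le r\le n-1$, the preceding proposition and Table~\ref{table:ppts} give an explicit list. The only infinite family there is $D^0_{2n}$, $D^0_{2n+1}$, whose values are $1/2^i$. So this part of $\Sigma$ lies in a set of the form $\{1,2/3,5/9,5/8,4/5,1/3,1/7,\ldots\}\cup\{1/2^i\}$: rational, ACC (its only accumulation point is $0$ and all its elements are positive), and accumulating only at $0$.

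For types $D_{2n}^r$ and $D_{2n+1}^r$, I would split further according to whether the coefficient of $xz$ in $G$ is even or odd, and use \cref{no-xz} and \cref{xz} respectively. In either case the multi-height lies in a finite set determined by $N:=n-r$. That set consists of the eventually periodic sequence $(e_0,e_1,\ldots)$, which is pre-periodic by part (1), together with sequences that are $1$ from index $t+1$ on. Every such sequence is pre-periodic, so \cref{mult-ht-to-ppt} gives a rational value; this proves $\Sigma\subset\Q$. Moreover, since the candidate set depends only on $N$, the set $\Sigma_N$ of values for a fixed $N$ is finite.

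For ACC and the accumulation points, it then suffices to show that the values in $\Sigma_N$ tend uniformly to $0$ as $N\to\infty$. Every value is at most $\sum_{m\ge h_0} c_m/2^m \le 2^{-h_0+1}$ (for $p=2$, $c_m\in\{0,1\}$). It is also at least $\sum_{m\ge e_0}c_m/2^m$, since $h_0\le e_0$ by \cref{no-xz}(2) and \cref{xz}(2). The difficulty is that $h_0$ may be small even when $N$ is large, so I cannot directly bound the value by $2^{-e_0}$.

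This uniform bound is the main obstacle. What I would try first is to use that $h_0\ge \sht(\var{R})$-type lower bounds fail in general, and instead argue as follows. Once $h_0<e_0$, \cref{no-xz}(3) forces $h_i=1$ for all $i\ge1$, giving value $2^{-h_0}$, which is of the form $1/2^i$ with $i\le e_0$. Once $h_0=e_0=\lceil\log_2 N\rceil+O(1)$, the value is at most $2^{1-e_0}\to0$. Hence every element of $\Sigma_N$ is either of the form $2^{-i}$ or is $O(1/N)$. The elements of $\Sigma$ therefore lie in $\{2^{-i}\}\cup\bigcup_N\Sigma_N'$, where $\Sigma_N'$ is finite and $\max\Sigma_N'\to0$. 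Such a set satisfies ACC: an increasing sequence is bounded below by a positive number, so it only meets finitely many $N$ and finitely many $2^{-i}$, hence it stabilizes. The same observation shows that its only accumulation point is $0$.

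Finally, for $p=2$ and $1/m\in\Sigma$: the case $m=1$ comes from any taut RDP. For $m\ge2$, take $n-r=m$ with $r$ large. Since $\alpha_i<m$ always by \cref{rem:alpha-bound}, choosing $r\ge m$ gives $\alpha_i<r$ for all $i$, and \cref{D2nr-xz} with $G=xz$ yields $\ppt=1/(n-r)=1/m$.
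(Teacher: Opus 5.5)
Your proposal is correct and takes essentially the same route as the paper. You use the Table for every type except $D^r_{2n}$ and $D^r_{2n+1}$, and for those you use \cref{no-xz,xz}: finitely many candidate multi-heights depending only on $n-r$, with $h_0<e_0$ forcing $(h_0,1,1,\ldots)$ and $h_0=e_0$ giving values at most $2^{1-e_0}$. You then use \cref{D2nr-xz} with $G=xz$ to realize $1/m$. Your single observation that $\Sigma\cap[\epsilon,1]$ is finite for every $\epsilon>0$ is a slightly cleaner way to get both ACC and the accumulation-point statement than the paper's two separate contradiction arguments.

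One harmless slip: the multi-height $(h_0,1,1,\ldots)$ gives $\ppt=2^{1-h_0}$, not $2^{-h_0}$. This is still a reciprocal power of $2$, so the argument is unaffected.
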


\begin{proof}
If $p \ne 2$, the result follows from Table~\ref{table:ppts}.
Hence we may assume $p=2$.
The inclusion $\Sigma \subseteq \Q$ follows from \cref{no-xz,xz}.
Furthermore, the inclusion
\[
\{1/m \mid m \in \Z_{\ge 1}\} \subseteq \Sigma 
\]
follows from \cref{D2nr-xz}.
Indeed, if we take $n=2m$, $r=m$, then we have $\alpha_i< n-r=r$ for all $i \geq 0$ and $\ppt(R,p)=1/m$.

Next, we prove that $\Sigma$ satisfies the ascending chain condition.
Suppose, for contradiction, that there exists a strictly increasing sequence
$P_1 < P_2 < P_3 < \cdots$ in $\Sigma$.
Then there exist elements $f_j \in W(k)[[x,y,z]] =: A$ such that
$\ppt(A/(f_j),p) = P_j$ for each $j$, and the reduction $\var{f}_j$
is one of the equations listed in Table~\ref{table:ppts} or of type $D^r_{2n}$ and $D_{2n+1}^r$.
By Table~\ref{table:ppts}, the set
\[
\{\, P_j \mid \text{$\var{A/(f_j)}$ is neither of type $D^r_{2n}$ nor of type $D^r_{2n+1}$ for any $n>r>0$} \,\}
\]
satisfies the ascending chain condition.
Thus, we may assume that for all $j \ge 1$, there exist integers $n_j > r_j > 0$
such that $\var{A/(f_j)}$ is of type $D^{r_j}_{2n_j}$ or $D^{r_j}_{2n_j+1}$.

Since the set $\{1/m \mid m \in \Z_{\ge 1}\}$ satisfies the ascending chain condition,
we may assume $P_j \notin \{1/m \mid m \in \Z_{\ge 1}\}$ for all $j \ge 1$.
In particular, the multi-height $\boldsymbol{h}^{(j)}$ of $A/(f_j)$ eventually becomes~$1$
for each $j \ge 1$ by \cref{no-xz,xz} and the proofs of \cref{D2nr,D2nr-xz}.
Set
\[
s_j := \inf\{\, s \mid h^{(j)}_i = 1 \text{ for all } i \ge s \,\}.
\]
If $s_j = 1$, then $P_j = 1/2^{h^{(j)}_0 - 1}$, and hence
\[
\{ P_j \mid s_j = 1 \} \subseteq \{1/m \mid m \in \Z_{\ge 1}\}.
\]
Therefore, we may assume $s_j \ge 2$ for all $j \ge 1$.

Next, we assume $f_j$ satisfies the conditions in \cref{notation:D2nr-no-xz} for every $j \geq 1$.
If 
\[
h^{(j)}_0=\rup{\log_2(n_j - r_j)} + 1,
\]
for all $j \geq 1$, then we have $n_1-r_1 \geq n_2-r_2 \geq \cdots $ since $P_1  < P_2 < \cdots$.
In particular, we have 
\[
\rup{\log_2(n_j - r_j)} + 1 \le \rup{\log_2(n_1 - r_1)} + 1
\]
for every $j \geq 1$.
Consequently, the set $\{ n_j - r_j \mid j \ge 1 \}$ is finite.
By \cref{no-xz}, the set $\{ P_j \mid j \ge 1 \}$ is also finite, which gives a contradiction.
Thus, we may assume $h^{(j)}_0< \rup{\log_2(n_j - r_j)} + 1$ for every $j \geq 1$.
Then we have $s_j=2$ by \cref{no-xz}, thus the set $\{ P_j \mid j \ge 1 \}$ is also finite.

On the other hand, if $f_j$ satisfies the conditions in \cref{notation:D2nr-xz} for every $j \geq 1$, then the finiteness of $\{ P_j \mid j \ge 1 \}$ follows from the same argument as above using \cref{xz} instead of \cref{no-xz}.

Finally, we prove the final assertion.
Suppose that there exists a non-zero accumulation point $P$ of $\Sigma$.
We take a sequence $\{P_j\}_{j \geq 1}$ in $\Sigma$ such that $\lim P_j=P$  and $P_j \neq P$ for any $j$.
We take a lift $R_j$ of a RDP such that $\ppt(R_j,p)=P_j$ for every $j \geq 1$.
Let $\boldsymbol{h}(j):=(h^{(j)}_0,h^{(j)}_1,\ldots)$ be the multi-height of $R_j$.
We take the $2$-adic expansion $P=\sum_{m=1} \frac{c_m}{2^m}$ such that the sequence $\{c_m\}_{m \geq 1}$ is not eventually zero.
Let $h \geq 1$ be the minimal integer such that $c_h \neq 0$.
Thus, we may assume $h^{(j)}_0=h$ for every $j \geq 1$.
Since the quasi-$F$-split height of $R$ determines $\ppt(R,p)$ if $R$ is of type $D_m^0$ for every $m \geq 1$, we may assume $R_j$ is of type $D_{2n_{j}}^{r_j}$ or $D_{2n_{j}+1}^{r_j}$.
By \cref{no-xz,xz} (cf.\ (\ref{eqn:no-xz-e0}, \ref{eqn:xz-e0})), if 
\[
h \neq \rup{\log_2 (n_j-r_j+1)}
\quad\text{and}\quad
h \neq \rup{\log_2(n_j-r_j)} + 1,
\]
then we 
then we have $\boldsymbol{h}{(j)}=(h,1,1,\ldots)$.
Therefore, we may assume 
\[
h=\rup{\log_2 (n_j-r_j+1)}
\quad \text{or} \quad \rup{\log_2(n_j - r_j)} + 1
\]
for every $j \geq 1$.
In particular, $n_j-r_j$ is bounded, and in particular, the set $\{\ppt(R_j,p)\}$ is finite by \cref{no-xz,xz}.
Therefore, we obtain the contradiction.
\end{proof}

\begin{example}
Since the list of possible values of $\ppt(-,p)$ for RDPs of type~$D$ has not been obtained, $\Sigma$ has not been determined completely when $p=2$.
For reference, we record the list of $t_r = t_{n,r} := \ppt (-,p)$ of the natural lift (i.e.,\ the lift where $G=0$) of type $D_{2n}^r$, for $n=17$.
This can be computed using a computer algebra system (combining with \cref{prop:repeating_multiheight} (cf.\ \cite{Takamatsu_code}) or \cref{D2nr}).
    \begin{align*}
    &(t_1, t_2, t_3, t_4, t_5, t_6, t_7, t_8, t_9, t_{10}, t_{11}, t_{12}, t_{13}, t_{14}, t_{15}) \\
    =& \bigl(\frac{1}{31}, \frac{9}{256}, \frac{5}{128}, \frac{41}{1024}, \frac{3}{64}, \frac{1}{21}, \frac{27}{512}, \frac{1}{17}, \frac{1}{15}, \frac{1}{13}, \frac{1}{11}, \frac{1}{9}, \frac{1}{7}, \frac{1}{5}, \frac{1}{3}\bigr).
    \end{align*}
Note that, in general, we have the following:
\begin{enumerate}
    \item 
When $\lceil \log_{2} (n-r) \rceil = \lceil \log_2 n \rceil$, we have $t_{n,r} = 1/2^{\lceil \log_2 (n-r) \rceil}$.
    \item 
When $r\geq \lfloor \frac{n}{2} \rfloor$, we have
$t_{n,r} = \frac{1}{2(n-r)-1}$.
   \item 
When $n-r$ is a power of $2$, we have 
$t_{n,r} = \frac{1}{2 (n-r) -1}$.
\end{enumerate}
Indeed, (1) follows from \cref{D2nr}(2) since we have $\alpha_1 \geq r$ in this case.
Also, (3) follows from \cref{D2nr}(1) since we have $\alpha_i = 0$ for any $i \geq 0$ in this case.
Moreover, (2) follows from \cref{D2nr}(1). Indeed, if $r \geq n-r$, we have $\alpha_i < n-r \leq r$ by \cref{rem:alpha-bound}.
We assume that $r< n-r$, i.e.,\ $n=2r+1$ in the following.
We shall show $\alpha_i < r$ by induction on $i$. Assume that $\alpha_i < r.$
If $e_i =1$, then we have
\[
\alpha_{i+1} = 2(1+ \alpha_i) - 1- (r+1) = 2\alpha_i -r <r.
\]
On the other hand, if $e_i \geq 2$, by (\ref{eqn:minimale}), we have
\[
n-r -1 \geq 2^{e_i-1}(1+\alpha_i) - 2^{e_i-2}.
\]
Therefore, we have
\[
\alpha_{i+1} = 2^{e_i} (1+ \alpha_i) -2^{e_i-1} - (n-r) \leq 2(n-r-1) - (n-r) = n-r-2 = r-1
\]
as desired.

\end{example}

\bibliographystyle{skalpha}
\bibliography{bibliography.bib}
\end{document}